\newtheorem{theorem}{Theorem}[section]
\newtheorem*{theorem*}{Theorem}
\newtheorem{lemma}[theorem]{Lemma}
\newtheorem{corollary}[theorem]{Corollary}
\theoremstyle{definition}
\newtheorem{definition}[theorem]{Definition}
\newtheorem{remark}{Remark}[section]
\DeclareMathOperator{\trace}{tr} 
\DeclareMathOperator{\di}{div}
\DeclareMathOperator{\rank}{rank}
\def\rr{{\mathbb R}}
\def\cc{{\mathbb C}}
\def\db{{\partial_b }}
\def\dbb{{\bar{\partial}_b }}
\def\lap{{\Delta}}
\newcommand{\la}{\langle}
\newcommand{\ra}{\rangle}
\begin{document}

\title{On $\overline{\partial }_{b}$-harmonic maps from pseudo-Hermitian
manifolds to K\"{a}hler manifolds}

\author{Yuxin Dong}
\address{School of Mathematical Science  and  Laboratory of Mathematics for Nonlinear Science, Fudan University, Shanghai 200433, PR China}
\email{yxdong@fudan.edu.cn}

\author{Hui Liu}
\address{School of Mathematical Sciences, Fudan University, Shanghai 200433, PR China}
\email{21110180015@m.fudan.edu.cn and liuh45@univie.ac.at}

\author{Biqiang Zhao}
\address{Beijing International Center for Mathematical Research, Peking University, Beijing, 100871, PR China}
\email{2306394354@pku.edu.cn}

\begin{abstract}
    In this paper, we consider maps from pseudo-Hermitian manifolds to K\"{a}hler manifolds and introduce partial energy functionals for these maps.
    First, we obtain a foliated Lichnerowicz type result on general
    pseudo-Hermitian manifolds, which generalizes a related result on Sasakian
    manifolds in \cite{SSZ2013holomorphic}. Next, we investigate critical maps of the partial
    energy functionals, which are referred to as $\overline{\partial }_{b}$-harmonic maps and $\partial _{b}$-harmonic maps. We give a foliated result
    for both $\overline{\partial }_{b}$- and $\partial _{b}$-harmonic maps,
    generalizing a foliated result of Petit \cite{Pet2002harmonic} for harmonic maps. Then we
    are able to generalize Siu's holomorphicity result for harmonic maps \cite{Siu1980rigid}
    to the case for $\overline{\partial }_{b}$- and $\partial _{b}$-harmonic
    maps.
    
    ~

    \paragraph{\textbf{Mathematics Subject Classification (2020)}:} 53C25, 58E20.
\end{abstract}
\maketitle

\renewcommand{\thefootnote}{\fnsymbol{footnote}}

\footnotetext{\hspace*{-5mm} \begin{tabular}{@{}r@{}p{13.4cm}@{}}
& This work was supported by  NSFC Grant (No. 12171091) and  the China Scholarship Council (No. 202306100156).\\
\end{tabular}}

\section{Introduction}
In \cite{Siu1980rigid}, Y.T. Siu proved the following theorem : \emph{Let }$%
f:M\rightarrow N$\emph{\ be a harmonic map between compact K\"{a}hler
manifolds. If }$(N,g)$\emph{\ has strongly negative curvature and }$%
\rank_{\rr}(df_{x})\geq 4$\emph{\ at some point }$x\in M$\emph{, then }$f$\emph{\
is holomorphic or anti-holomorphic}.\ 

The above theorem, combined with Eells-Sampson's existence theorem \cite{ES1964harmonic},
implies Siu's celebrated strong rigidity for compact K\"{a}hler manifolds
with strongly negative curvature.  Subsequently, there have been some
research efforts to generalize Siu's theorem to the case of non-K\"{a}hler
Hermitian manifolds. In \cite{JY1993nonlinear}, Jost and Yau used Hermitian harmonic maps to
generalize Siu's rigidity theorem to the case where the domain manifold is
astheno-K\"{a}hler. In \cite{LY2014hermitian}, Liu and Yang considered the critical points
of partial energies for maps from Hermitian manifolds, and discussed related
holomorphicity results for these critical maps.

A pseudo-Hermitian manifold $(M^{2m+1},H,J,\theta )$ is a strictly
pseudoconvex CR manifold $(M,H,J)$ endowed with a pseudo-Hermitian structure 
$\theta $. It can be regarded as an odd dimensional analogue of a Hermitian
manifold. Harmonic maps and their generalizations have also been used to
study pseudo-Hermitian manifolds. In \cite{Pet2002harmonic}, Petit established some rigidity
results for harmonic maps from pseudo-Hermitian manifolds. First, he proved
that any harmonic map from a compact Sasakian manifold to a Riemannian
manifold with nonpositive sectional curvature is trivial on the Reeb field
of the pseudo-Hermitian structure. A map with this property is said to be
foliated. Next he proved that under a similar rank condition as above, the
harmonic map from a compact Sasakian manifold to a K\"{a}hler manifold with
strongly negative curvature is CR-holomorphic or CR-antiholomorphic. In
\cite{CDRY2019onharm}, among other results, the authors generalized Petit's results to
the case of pseudoharmonic maps. Besides, Li and Son \cite{LS2019CRanalogue} defined the
following $\overline{\partial }_{b}$-energy functional for maps from a
pseudo-Hermitian manifold to a K\"{a}hler manifold:
\[
E_{\overline{\partial }_{b}}(f)=\frac{1}{2}\int_{M}|\overline{\partial }%
_{b}f|^{2}dv_{\theta }.
\]%
The $\partial _{b}$-energy functional $E_{\partial _{b}}(f)$ can be defined
similarly. A critical point of $E_{\overline{\partial }_{b}}(\cdot )$ was
called pseudo-Hermitian harmonic.\ Then they proved a "Siu-type
holomorphicity" result for a pseudo-Hermitian harmonic map under a rank
condition on a dense subset of $M$. 

In this paper, we consider maps from a pseudo-Hermitian manifold $M$ to a K\"{%
a}hler manifold $(N,\widetilde{J},\widetilde{g})$, and introduce the
following partial energy functionals  
\begin{equation}
E_{\overline{\partial }_{b},\xi }(f)=\frac{1}{2}\int_{M}\left\{ |\overline{%
\partial }_{b}f|^{2}+\frac{1}{4}|df(\xi )|^{2}\right\} dv_{\theta } 
\end{equation}%
and 
\begin{equation}
E_{\partial _{b},\xi }(f)=\frac{1}{2}\int_{M}\left\{ |\partial _{b}f|^{2}+%
\frac{1}{4}|df(\xi )|^{2}\right\} dv_{\theta }.  
\end{equation}%
where $\xi $ denotes the Reeb vector field of $(M,\theta )$. Note that the
usual energy $E(f)=E_{\overline{\partial }_{b},\xi }(f)+E_{\partial _{b},\xi
}(f)$. A critical point of $E_{\overline{\partial }_{b},\xi }(f)$ (resp. $%
E_{\partial _{b},\xi }(f)$) will be referred to as a $\overline{\partial }%
_{b}$-harmonic map (resp. $\partial _{b}$-harmonic map). Clearly $E_{\overline{%
\partial }_{b},\xi }(f)=0$ (resp. $E_{\partial _{b},\xi }(f)=0$) if and only
if $f$ is a foliated CR map (resp. foliated anti-CR map). 

For a map $f:(M^{2m+1},H,J,\theta )\rightarrow (N,\widetilde{J},\widetilde{g}) $, we set
\[
K_{b}(f)=E_{\partial _{b},\xi }(f)-E_{\overline{\partial }_{b},\xi
}(f)=E_{\partial _{b}}(f)-E_{\overline{\partial }_{b}}(f).
\]%
The authors in \cite{SSZ2013holomorphic} proved that if $M$ is a compact Sasakian manifold,
then $K_{b}(f)$ is invariant under a foliated deformation. First, we want to
generalize their result to the case that the domain manifold is a general
pseudo-Hermitian manifold.

\begin{theorem}
 Let $(M^{2m+1},H,J,\theta )$ be a compact
pseudo-Hermitian manifold, and $(N,\widetilde{J},\widetilde{g})$ be a K\"{a}hler
manifold. Then $K_{b}(f)$ is a smooth foliated homotopy invariant, that is, $%
K_{b}(f_{t})$ is constant for any family $\{f_{t}\}$ of foliated maps. 
\end{theorem}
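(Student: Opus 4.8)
The plan is to reinterpret the difference
$K_b(f)=E_{\partial_b}(f)-E_{\overline{\partial}_b}(f)=\tfrac{1}{2}\int_M\big(|\partial_b f|^2-|\overline{\partial}_b f|^2\big)\,dv_\theta$
as the integral over $M$ of a natural differential form, and then to exploit closedness of that form together with Stokes' theorem and the foliated hypothesis. Fix the K\"ahler form $\omega_N(\cdot,\cdot)=\widetilde g(\widetilde J\,\cdot,\cdot)$ of $N$ and normalize $dv_\theta=\tfrac{1}{m!}\theta\wedge(d\theta)^m$. The first step is a pointwise CR Wirtinger identity: working in a local unitary frame for $H^{1,0}$, a routine linear-algebra computation gives, for \emph{every} smooth map $f:M\to N$,
\begin{equation*}
|\partial_b f|^2-|\overline{\partial}_b f|^2=\langle f^{*}\omega_N,\,d\theta\rangle_H,
\qquad\text{hence}\qquad
K_b(f)=c_m\int_M f^{*}\omega_N\wedge\theta\wedge(d\theta)^{m-1}
\end{equation*}
for a nonzero dimensional constant $c_m$; this is the odd-dimensional analogue of the classical fact that, between K\"ahler manifolds, $E_\partial(f)-E_{\overline\partial}(f)$ is the pairing of $f^{*}[\omega_N]$ with the K\"ahler class. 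Note that only the restriction of $f^{*}\omega_N$ to $H$ enters, since wedging with $\theta$ annihilates the $\xi$-components of $f^{*}\omega_N$; in particular the identity does not see $df(\xi)$, which is consistent with the equal $\tfrac{1}{4}|df(\xi)|^2$ terms of the two partial energies having already cancelled in $K_b$.

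Since $N$ is K\"ahler, $d\omega_N=0$, so $f^{*}\omega_N$ is a closed $2$-form on $M$ for any $f$ (and, when $f$ is foliated, $df(\xi)=0$ also gives $\iota_\xi f^{*}\omega_N=0$, so $f^{*}\omega_N$ is basic and closed). Now let $\{f_t\}$ be a smooth family of foliated maps with variation field $V_t=\partial_t f_t\in\Gamma(f_t^{*}TN)$. Applying Cartan's formula to $F^{*}\omega_N$ on $M\times(-\varepsilon,\varepsilon)$, where $F(x,t)=f_t(x)$, and using $d\omega_N=0$, one gets $\partial_t\big(f_t^{*}\omega_N\big)=d\beta_t$ with $\beta_t$ the $1$-form on $M$ given by $\beta_t(X)=\omega_N\big(V_t,\,df_t(X)\big)$. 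Differentiating the formula for $K_b$ above, using Stokes' theorem on the closed manifold $M$ together with $d\big(\theta\wedge(d\theta)^{m-1}\big)=(d\theta)^m$, we obtain
\begin{equation*}
\frac{d}{dt}K_b(f_t)=c_m\int_M d\beta_t\wedge\theta\wedge(d\theta)^{m-1}=c_m\int_M\beta_t\wedge(d\theta)^m .
\end{equation*}

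To finish, observe that $(d\theta)^m$ is horizontal of maximal horizontal degree $2m$, so $\beta_t\wedge(d\theta)^m=\big(\iota_\xi\beta_t\big)\,\theta\wedge(d\theta)^m=\beta_t(\xi)\,\theta\wedge(d\theta)^m$; but $\beta_t(\xi)=\omega_N\big(V_t,\,df_t(\xi)\big)=0$ precisely because each $f_t$ is foliated. Hence $\tfrac{d}{dt}K_b(f_t)\equiv 0$, so $K_b(f_t)$ is constant; applied along any path of foliated maps, this shows $K_b$ is a smooth foliated homotopy invariant.

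The only genuinely computational point is the Wirtinger identity of the first step: one must fix conventions for $\partial_b$, $\overline{\partial}_b$, the Levi form and $dv_\theta$, carry out the (routine but bookkeeping-heavy) identification of $|\partial_b f|^2-|\overline{\partial}_b f|^2$ with the $d\theta$-trace of $f^{*}\omega_N$, and check that the vertical and mixed components of $f^{*}\omega_N$ drop out after wedging with $\theta$ and that $c_m\neq 0$. Everything afterwards is formal. An alternative route would be to differentiate $E_{\partial_b}(f_t)-E_{\overline{\partial}_b}(f_t)$ directly via the first-variation formulas and check that the difference of the two resulting tension terms integrates to zero against a foliated variation; this is the same computation organized differently, and the differential-form packaging above makes the homotopy invariance transparent.
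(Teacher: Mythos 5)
Your proof is correct and follows the same overall strategy as the paper's: both rest on the pointwise Wirtinger-type identity $|\partial_b f|^2-|\overline{\partial}_b f|^2=\langle f^{*}\omega^N,d\theta\rangle$ (the paper's Lemma 3.1) and on the Homotopy Lemma $\partial_t\bigl(f_t^{*}\omega^N\bigr)=d\bigl(f_t^{*}i(v_t)\omega^N\bigr)$, and both reduce $\frac{d}{dt}K_b(f_t)$ to the single term $\omega^N\bigl(v_t,df_t(\xi)\bigr)$, which vanishes for a foliated family. Where you genuinely differ is in how the integration by parts is executed. The paper stays with the $L^2$ inner product and computes the codifferential $\delta d\theta$ of the Webster metric; this requires the formula relating the Levi-Civita connection $\nabla^\theta$ to the Tanaka-Webster connection, together with $\nabla d\theta=0$ and $\nabla^\theta_X\xi=\tau(X)+JX$, and yields $(\delta d\theta)|_H=0$, $(\delta d\theta)(\xi)=2m$. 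You instead rewrite $K_b(f)$ as $c_m\int_M f^{*}\omega^N\wedge\theta\wedge(d\theta)^{m-1}$ --- a standard pointwise identity of symplectic linear algebra which you correctly flag as the one computation to be checked, including $c_m\neq 0$ --- and then use only Stokes' theorem, $d\bigl(\theta\wedge(d\theta)^{m-1}\bigr)=(d\theta)^m$, and the degree count $\beta_t\wedge(d\theta)^m=\beta_t(\xi)\,\theta\wedge(d\theta)^m$. This trades the connection computation for elementary exterior algebra and makes the mechanism transparent (only the $\xi$-component of the variation $1$-form can survive against the volume form); the price is that the explicit variation formula $\frac{d}{dt}K_b(f_t)=2m\int_M\omega^N\bigl(v_t,df_t(\xi)\bigr)dv_\theta$, which the paper records with its constant $2m$, is absorbed into the unspecified $c_m$. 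Both routes are complete and correct.
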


This is a foliated Lichnerowicz type result, which implies that the $E_{%
\overline{\partial }_{b},\xi }$-, $E_{\partial _{b},\xi }$- and $E$%
-critical points through foliated maps coincide. Furthermore, in a given
foliated homotopy class, the $E_{\overline{\partial }_{b},\xi }$-, $%
E_{\partial _{b},\xi }$- and $E$-minima coincide. 

Next, we try to generalize Petit's foliated rigidity theorem and get the
following result.

\begin{theorem}
  Let $(M^{2m+1},H,J,\theta )$ be a compact Sasakian
manifold with $m\geq 2$, and $(N,\widetilde{J},\widetilde{g})$ be a K\"{a}%
hler manifold with strongly semi-negative curvature. If $f:M\rightarrow N$
is a $\overline{\partial }_{b}$-harmonic map or a $\partial _{b}$-harmonic
map, then $f$ is foliated. Furthermore, $f$ must be $\overline{\partial }_{b}
$-pluriharmonic (that is, $f_{i\overline{j}}^{\alpha }=f_{\overline{j}%
i}^{\alpha }=0$), and 
\[
    \widetilde{R}_{ \beta \bar \alpha \gamma \bar \sigma } (f_{\bar i}^{\bar \alpha  }f_{\bar j}^{\beta }-f_{\bar j}^{\bar \alpha  }f_{\bar i}^{\beta }) (\overline{f_{\bar i}^{\bar \gamma }f_{ \bar j}^{ \sigma  } -f_{\bar j}^{\bar \gamma }f_{\bar i}^{\sigma  }}) =0.
\]
\end{theorem}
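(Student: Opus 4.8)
The plan is to transplant the classical Siu--Sampson strategy to the Sasakian, transverse-K\"ahler setting, in three stages.

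\emph{Stage 1: $f$ is foliated.} First I would write down the Euler--Lagrange equation of $E_{\overline{\partial}_{b},\xi}$ (resp.\ of $E_{\partial_{b},\xi}$); its second-order part is the sub-Laplacian $\Delta_{b}f$ corrected by target-curvature and pseudo-Hermitian torsion terms. Feeding this into a CR Bochner--Weitzenb\"ock identity for the Reeb-direction energy density $|df(\xi)|^{2}$ produces a formula whose right-hand side consists of $|\nabla_{b}(df(\xi))|^{2}$, a curvature term of the form $-\sum_{a}\langle\widetilde{R}^{N}(df(\xi),df(e_{a}))df(e_{a}),df(\xi)\rangle$ summed over an orthonormal frame $\{e_{a}\}$ of $H$, torsion terms, and terms killed by the Euler--Lagrange equation. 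On a Sasakian manifold the pseudo-Hermitian torsion vanishes and $\xi$ is Killing, so the torsion contributions drop out; integrating over the compact $M$ against $dv_{\theta}$ kills the divergence and Reeb-derivative terms, and since strongly semi-negative curvature of $N$ implies nonpositive Riemannian sectional curvature, the curvature term has a favourable sign. One concludes $\nabla_{b}(df(\xi))=0$, hence $df(\xi)=0$, so $f$ is foliated. This is the $\overline{\partial}_{b}$- and $\partial_{b}$-analogue of Petit's foliated rigidity theorem \cite{Pet2002harmonic}.

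\emph{Stage 2: reduction to a transversally harmonic map.} Once $df(\xi)=0$ we have $E_{\overline{\partial}_{b},\xi}(f)=\tfrac12\bigl(E(f)-K_{b}(f)\bigr)$, and the foliated homotopy invariance of $K_{b}$ proved above gives $\tfrac{d}{dt}K_{b}(f_{t})=0$ along any foliated deformation; hence a foliated $\overline{\partial}_{b}$-harmonic map is critical for the full energy $E$ among foliated maps, i.e.\ a foliated harmonic map. (Equivalently, once $df(\xi)=0$ the Euler--Lagrange equation of $E_{\overline{\partial}_{b},\xi}$ reduces directly to the transversally harmonic map equation.) Since $M$ is Sasakian, its transverse structure is K\"ahler, so $f$ is transversally harmonic between K\"ahler structures and Siu's $\partial\bar\partial$-argument can be run with basic forms on $M$. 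The same applies to a $\partial_{b}$-harmonic map.

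\emph{Stage 3: Siu's $\partial\bar\partial$-Bochner formula, transversally.} Using a transverse unitary coframe $\{\theta^{i}\}$, one forms the transverse analogue of Siu's form out of $\overline{\partial}_{b}f$ --- essentially (up to normalisation) the $(0,2)$-form $\sum(f^{\bar\alpha}_{\bar i}f^{\beta}_{\bar j}-f^{\bar\alpha}_{\bar j}f^{\beta}_{\bar i})\,\theta^{\bar i}\wedge\theta^{\bar j}$ with values in the relevant bundle over $N$ --- and computes $\overline{\partial}_{b}$ of it: the K\"ahler condition on $N$ together with the transversal harmonicity of $f$ leave only a curvature term. Wedging against $(d\theta)^{m-2}$ --- this is where $m\geq 2$ enters, so that the resulting transverse $(m,m)$-form is meaningful --- and applying Stokes on the compact $M$ yields an identity of the schematic shape
\[
\int_{M}\Bigl(|f^{\alpha}_{\bar j i}|^{2}+\widetilde{R}_{\beta\bar\alpha\gamma\bar\sigma}\,(f^{\bar\alpha}_{\bar i}f^{\beta}_{\bar j}-f^{\bar\alpha}_{\bar j}f^{\beta}_{\bar i})\,\overline{(f^{\bar\gamma}_{\bar i}f^{\sigma}_{\bar j}-f^{\bar\gamma}_{\bar j}f^{\sigma}_{\bar i})}\Bigr)\,dv_{\theta}=0,
\]
up to normalisation and the sign convention for $\widetilde{R}$. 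Strongly semi-negative curvature of $N$ makes the curvature integrand non-positive, so, the two summands having the same sign, each vanishes pointwise: the first forces $f^{\alpha}_{\bar j i}=0$, and then the commutation formula for mixed covariant derivatives on a Sasakian manifold --- whose only obstruction term is proportional to $df(\xi)=0$ --- gives $f^{\alpha}_{i\bar j}=0$ as well, i.e.\ $f$ is $\overline{\partial}_{b}$-pluriharmonic; the second summand is exactly the asserted curvature identity. For a $\partial_{b}$-harmonic map the conjugate computation yields the same conclusions.

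\emph{Main obstacle.} The delicate points are the two Bochner computations. In Stage 1 one must verify that the Reeb-energy Bochner identity genuinely closes up using the $\overline{\partial}_{b}$-Euler--Lagrange equation (which is not the harmonic map equation) rather than leaving stray lower-order terms. In Stage 3 the issue is that in the pseudo-Hermitian world $\overline{\partial}_{b}$ does not square to zero and the commutators $[\nabla_{i},\nabla_{\bar j}]$ of Tanaka--Webster covariant derivatives spawn Reeb-direction and torsion terms, so Siu's identity \cite{Siu1980rigid} acquires extra contributions absent in the K\"ahler case; it is precisely the Sasakian hypothesis (vanishing torsion) together with the foliation $df(\xi)=0$ from Stage 1 that make them all cancel. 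Carrying this out requires patient bookkeeping with the structure equations and the commutation formulas, and one must check in particular that the Stokes step leaves no residual $\xi$-contraction boundary term.
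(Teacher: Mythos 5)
Your Stage 1 does not close as described, and this is the crux of the theorem. The Euler--Lagrange equation of $E_{\overline{\partial}_b,\xi}$ is $\trace_{g_\theta}\beta-2m\widetilde{J}df(\xi)=0$, and the first-order correction $-2m\widetilde{J}df(\xi)$ survives into the Bochner identity for $|df(\xi)|^2$: even on a Sasakian manifold, after substituting the Euler--Lagrange equation one is left with (in the paper's notation)
\[
\tfrac12\Delta|df(\xi)|^2=2\sum_j\bigl(|f^{\alpha}_{0j}|^2+|f^{\alpha}_{0\bar j}|^2\bigr)+2|f^{\alpha}_{00}|^2+2m\sqrt{-1}\,\bigl(f^{\bar\alpha}_{0}f^{\alpha}_{00}-f^{\alpha}_{0}f^{\bar\alpha}_{00}\bigr)+(\text{curvature terms}\ \geq 0),
\]
and the mixed term $2m\sqrt{-1}\,(f^{\bar\alpha}_{0}f^{\alpha}_{00}-f^{\alpha}_{0}f^{\bar\alpha}_{00})$ is real but has no definite sign; it is \emph{not} killed by the Euler--Lagrange equation and cannot be absorbed into the squares. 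Integrating therefore yields only the inequality $4m\sqrt{-1}\int_M f^{\bar\alpha}_{0}f^{\alpha}_{00}+2\int_M|f^{\alpha}_{00}|^2\leq 0$, not $df(\xi)=0$. Your ``Main obstacle'' paragraph flags exactly this danger but does not resolve it.

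The paper's resolution is that your Stage 3 cannot be postponed until after foliatedness: the Siu-type $\partial\overline{\partial}$ identity must be brought in \emph{first}. Concretely, the authors introduce the Li--Son tensor $B_{i\bar j}f^{\alpha}$ and auxiliary one-forms $E$ and $F$, integrate their divergences, and use $(Lf)^{\alpha}=0$ together with strong semi-negativity to obtain a second, complementary inequality $-\int_M|f^{\alpha}_{00}|^2-4m\sqrt{-1}\int_M f^{\alpha}_{00}f^{\bar\alpha}_{0}\leq 0$. Adding the two inequalities cancels the mixed terms and forces $f^{\alpha}_{00}=0$; only then does the Bochner formula become a sum of non-negative terms, giving $\beta(\xi,X)=0$ for all $X\in H$ and hence $df(\xi)=0$ by a separate integration-by-parts lemma. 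Afterwards the same Siu-type identity, now with $f^{\alpha}_{0}=0$, delivers $B_{i\bar j}f^{\alpha}=0$ and the curvature vanishing, whence pluriharmonicity. So your three stages contain the right ingredients, but the correct logical order is: Siu identity and Bochner identity \emph{together} $\Rightarrow f^{\alpha}_{00}=0\Rightarrow$ foliated $\Rightarrow$ pluriharmonic, rather than foliatedness first by a Bochner argument alone.
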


Subsequently, by a similar argument as in \cite{Siu1980rigid}, \cite{Jost1991nonlinear} and \cite{CDRY2019onharm}, we obtain the following CR rigidity result for $\dbb$-harmonic maps.

\begin{theorem}
 Let $(M^{2m+1},H,J,\theta )$ be a compact Sasakian
manifold with $m\geq 2$, and $(N,\widetilde{J},\widetilde{g})$ be a K\"{a}%
hler manifold with strongly negative curvature. Suppose $f:M\rightarrow N$
is a $\overline{\partial }_{b}$-harmonic map, and $\rank_{\rr}(df_{p})\geq 3$ at
some point $p\in M$. Then $f$ is a foliated CR map or foliated anti-CR map.
\end{theorem}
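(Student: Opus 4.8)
The plan is to use the previous theorem to reduce everything to linear algebra plus a unique continuation argument in the spirit of Siu, Jost and \cite{CDRY2019onharm}. By that theorem (whose hypotheses, in particular $m\ge 2$, are in force) $f$ is foliated, $\overline{\partial}_b$-pluriharmonic, and satisfies at every point
\[
\widetilde{R}_{\beta\bar\alpha\gamma\bar\sigma}\,\Psi^{\bar\alpha\beta}_{\bar i\bar j}\,\overline{\Psi^{\bar\gamma\sigma}_{\bar i\bar j}}=0,\qquad \Psi^{\bar\alpha\beta}_{\bar i\bar j}:=f^{\bar\alpha}_{\bar i}f^{\beta}_{\bar j}-f^{\bar\alpha}_{\bar j}f^{\beta}_{\bar i}.
\]
Writing $u_{\bar i}:=(f^{\bar\alpha}_{\bar i})_{\alpha}$ and $v_{\bar i}:=(f^{\beta}_{\bar i})_{\beta}$, each $\Psi^{\bar\alpha\beta}_{\bar i\bar j}=u_{\bar i}\otimes v_{\bar j}-u_{\bar j}\otimes v_{\bar i}$ is a difference of two decomposable tensors, so the defining (strict) inequality of strongly negative curvature --- not available under the merely strongly semi-negative hypothesis of Theorem 1.2 --- applies to each summand, and the vanishing of the sum forces $\Psi^{\bar\alpha\beta}_{\bar i\bar j}=0$ for all indices; that is, $u_{\bar i}\otimes v_{\bar j}=u_{\bar j}\otimes v_{\bar i}$ for all $\bar i,\bar j$, at every point of $M$.

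Next I would run the rank dichotomy. Fix $q\in M$. If all $u_{\bar i}(q)=0$ then $\partial_b f_q=0$ and $f$ is anti-CR at $q$; if all $v_{\bar i}(q)=0$ then $\overline{\partial}_b f_q=0$ and $f$ is CR at $q$; otherwise there is $\bar i_0$ with $u_{\bar i_0},v_{\bar i_0}\neq 0$, and matching the two decomposable tensors in $u_{\bar i_0}\otimes v_{\bar j}=u_{\bar j}\otimes v_{\bar i_0}$ gives $u_{\bar j}=\lambda_{\bar j}u_{\bar i_0}$, $v_{\bar j}=\lambda_{\bar j}v_{\bar i_0}$ with the \emph{same} scalar $\lambda_{\bar j}$. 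In this last case $df(Z_i)=\overline{f^{\bar\alpha}_{\bar i}}\,\partial_\alpha+\overline{f^{\alpha}_{\bar i}}\,\partial_{\bar\alpha}=\bar\lambda_{\bar i}\,W$ with $W$ independent of $i$, so the image of the complexified differential $df|_{H_q\otimes\cc}$ lies in $\operatorname{span}_{\cc}\{W,\overline{W}\}$ and, since $df(\xi)=0$, $\rank_{\rr}(df_q)\le 2$. Hence the open set $U:=\{q:\rank_{\rr}(df_q)\ge 3\}\ni p$ is contained in $M_1\cup M_2$, where $M_1:=\{\overline{\partial}_b f=0\}$ and $M_2:=\{\partial_b f=0\}$; and since $M_1\cap M_2\cap U=\{q:df_q|_H=0\}\cap U=\varnothing$, we get $U=(U\cap M_1)\sqcup(U\cap M_2)$ with both pieces relatively closed, hence clopen, in $U$. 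On the connected component $U_0\ni p$ the map $f$ is therefore CR everywhere or anti-CR everywhere; say CR, so $\overline{\partial}_b f$ vanishes on the nonempty open set $U_0$ (the anti-CR case is symmetric, with $\partial_b f$ in place of $\overline{\partial}_b f$).

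It then remains to propagate $\overline{\partial}_b f\equiv 0$ from $U_0$ to all of $M$. The $\overline{\partial}_b$-pluriharmonicity $f^{\alpha}_{\bar j i}=0$ says precisely that $\overline{\partial}_b f$ is parallel along every $(1,0)$-direction of the Tanaka--Webster connection; since $f$ is foliated this section is basic, and passing to the local transverse K\"ahler quotient of the Sasakian manifold it becomes the anti-holomorphic differential of a pluriharmonic map on a connected complex manifold, so that the unique continuation used by Siu \cite{Siu1980rigid}, \cite{Jost1991nonlinear} and \cite{CDRY2019onharm} applies: a solution of this Cauchy--Riemann-type overdetermined system vanishing on a nonempty open set vanishes identically. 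Hence $\overline{\partial}_b f\equiv 0$ on $M$, and combined with the foliatedness this says that $f$ is a foliated CR map (a foliated anti-CR map in the symmetric case), which is the assertion.

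The pointwise part (the first two steps) is Siu's algebraic lemma transcribed into the pseudo-Hermitian frame and is essentially formal once one has the output of Theorem 1.2, so I expect the only genuinely delicate point to be the globalization: one must check carefully that $\overline{\partial}_b$-pluriharmonicity does force $\overline{\partial}_b f$ into a first-order overdetermined elliptic system with the strong unique continuation property, which is exactly where the Sasakian condition and the structure equations of the Tanaka--Webster connection are used.
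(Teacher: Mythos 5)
Your argument is correct and is essentially the proof the paper intends: the paper gives no details for this theorem, deducing it ``immediately'' from \eqref{curv0} by citing \cite{Siu1980rigid}, \cite{Jost1991nonlinear} and \cite{CDRY2019onharm}, and your write-up is precisely that argument (the pointwise Siu linear algebra forced by strong negativity applied term by term in \eqref{curv0}, the rank-$\geq 3$ dichotomy splitting the open set $\{\rank_{\rr}(df)\geq 3\}$ into clopen CR and anti-CR pieces, and unique continuation via the transverse K\"ahler structure of the Sasakian manifold). The one step you flag as delicate --- propagating $\overline{\partial}_b f\equiv 0$ from an open set to all of $M$ using $\overline{\partial}_b$-pluriharmonicity and foliatedness --- is exactly the step the paper also delegates to those references, so nothing is missing relative to the paper's own treatment.
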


\section{Preliminaries}

Let $M^{2m+1}$ be a $(2m+1)$-dimensional smooth orientable manifold. A \emph{%
CR structure} on $M^{2m+1}$ is a complex rank-$m$ subbundle $H^{1,0} $ of $%
T(M)\otimes \mathbb{C}$ with the following properties 
\begin{align}
    H^{1,0} \cap H^{0,1}  =& \{0\}, \quad H^{0,1}  = \overline{H^{1,0} } \notag \\
    [\Gamma (H^{1,0} ),\Gamma (H^{1,0} )] \subseteq &\, \Gamma(H^{1,0} ). \label{1.1}
    \end{align}    
The complex subbundle $H^{1,0} $ corresponds to a real rank $2m$ subbundle 
$H :=\Re\{H^{1,0} \oplus H^{0,1} \}$ of $T(M)$, which carries a
complex structure $J_{b}$ defined by 
\begin{equation*}
J_{b}(V+\overline{V})=i(V-\overline{V})
\end{equation*}%
for any $V\in H^{1,0} $. The synthetic object $(M,H^{1,0} )$ or $%
(M,H ,J_{b})$ is called a \emph{CR manifold}.

Let $E$ be a real line bundle of $T^{\ast }M$, whose fiber at each point $%
x\in M$ is given by%
\begin{equation*}
E_{x}=\{\omega \in T_{x}^{\ast }M:\ker \omega \supseteq H_{x} \}.
\end{equation*}%
Since both $TM$ and $H $ are orientable vector bundles on $M$, the real
line bundle $E$ is orientable, so $E$ has globally defined nowhere vanishing
sections. Any such a section $\theta \in \Gamma (E\backslash \{0\})$ is
referred to as a \emph{pseudo-Hermitian 1-form} on $M$.

Given a pseudo-Hermitian $1$-form $\theta $ on $M$, we have the Levi form $%
L_{\theta }$ corresponding to $\theta $, which is defined by%
\begin{equation}
L_{\theta }(X,Y)=d\theta (X,J_{b}Y)  \label{1.2}
\end{equation}%
for any $X,Y\in H $. The second condition in (\ref{1.1}) implies that $%
L_{\theta }$ is $J_{b}$-invariant, and thus symmetric. If $L_{\theta }$ is
positive definite on $H $ for some $\theta $, $(M,H^{1,0} )$ is said to
be strictly pseudoconvex. From now on, we will always assume that $%
(M,H^{1,0} )$ is a strictly pseudoconvex CR manifold endowed with a
pseudo-Hermitian $1$-form $\theta $, such that its Levi form $L_{\theta }$
is positive definite. In this case  the synthetic object $(M,H^{1,0} ,J,\theta )$ is
referred to as a pseudo-Hermitian manifold.

Let $(M^{2m+1},H^{1,0} ,J, \theta )$ be a pseudo-Hermitian manifold. Clearly $%
\theta $ is a contact form. Thus there is a unique vector field $\xi \in
\Gamma (T(M))$, called the \emph{Reeb vector field}, such that
\begin{equation}
\theta (\xi )=1\text{, \ }i_{\xi }d\theta =0 , \label{1.3}
\end{equation}
where $i_{\xi }$ denotes the interior product with respect to $\xi $. The
collection of all its integral curves forms an oriented one-dimensional
foliation $\mathcal{F}_{\xi }$ on $M$, which is called the \emph{Reeb
foliation}. The first condition in (\ref{1.3}) implies that $\xi $ is
transversal to $H $. Therefore $T(M)$ admits a decomposition%
\begin{equation}
T(M)=H \oplus V_{\xi },  \label{1.4}
\end{equation}%
where $V_{\xi }:=%
%TCIMACRO{\U{211d} }%
%BeginExpansion
\mathbb{R}
%EndExpansion
\xi $ is a trivial line bundle on $M$. In terms of terminology from
foliation theory, $H $ and $V_{\xi }$ are called the horizontal and
vertical distributions respectively. Let $\pi _{H}:TM\rightarrow H $ and $%
\pi _{V}:TM\rightarrow V_{\xi }$ be the natural projections associated with
the direct sum decomposition (\ref{1.4}). In terms of $\theta $, the Levi
form $L_{\theta }$ can be extended to a Riemannian metric%
\begin{equation}
g_{\theta }=L_{\theta }(\pi _{H},\pi _{H})+\theta \otimes \theta,  \label{1.5}
\end{equation}
which is called the Webster metric. It is convenient to extend the complex
structure $J_{b}$ on $H $ to an endomorphism $J$ of $T(M)$ by requiring
that 
\begin{equation}
J\mid _{H }=J_{b}\text{ \ and \ }J\mid _{V_{\xi }}=0 , \label{1.6}
\end{equation}%
where $\mid $ denotes the fiberwise restriction.

It is known that there exists a unique linear connection $\nabla $ on $%
(M^{2m+1},H^{1,0} ,\theta )$, called the \emph{Tanaka-Webster connection},
such that (cf. \cite{DT2006differential}, \cite{TA1975adifferential}, \cite{We1978pseudo})
\begin{enumerate}
\item $\nabla _{X}\Gamma (H )\subseteq \Gamma (H )$ and $\nabla _{X}J=0$
for any $X\in \Gamma (TM)$;

\item $\nabla g_{\theta }=0$;

\item $T_{\nabla }(X,Y)=2d\theta (X,Y)\xi $ \ and $T_{\nabla }(\xi
,JX)+JT_{\nabla }(\xi ,X)=0$ for any $X,Y\in H $, where $T_{\nabla }(\cdot
,\cdot )$ denotes the torsion of the connection $\nabla $.
\end{enumerate}

One important partial component of $T_{\nabla }$ is the pseudo-Hermitian
torsion $\tau $ given by%
\begin{equation}
\tau (X)=T_{\nabla }(\xi ,X)  \label{1.7}
\end{equation}%
for any $X\in TM$. Then $(M,H^{1,0} ,\theta )$ is said to be \emph{Sasakian%
} if $\tau =0$.

For the pseudo-Hermitian manifold $(M,H^{1,0} ,\theta )$, we choose a
local orthonormal frame field $\{e_{A}\}_{A=0}^{2m}=\{\xi
,e_{1},...,e_{m},e_{m+1},...,e_{2m}\}$ with respect to $g_{\theta }$ such
that 
\begin{equation*}
\{e_{m+1},...,e_{2m}\}=\{Je_{1},...,Je_{m}\}.
\end{equation*}%
Such a frame field $\{e_{A}\}_{A=0}^{2m}$ is referred to as an adapted frame
field $M$. Set 
\begin{equation}
\eta _{j}=\frac{1}{\sqrt{2}}\left( e_{j}-\sqrt{-1}Je_{j}\right) \text{, \ }%
\eta _{\bar{j}}=\overline{\eta _{j}}\text{ \ (}j=1,...,m\text{).}
\label{1.8}
\end{equation}%
Let $\{\theta ^{j}\}_{j =1}^{m}$ be the dual frame field of $\{\eta
_{j}\}_{j=1}^{m}$. By the properties of the Tanaka-Webster connection $%
\nabla $, we have (cf. \cite{DT2006differential})%
\begin{equation}
\nabla \xi =0\text{, \ }\nabla \eta _{j}=\theta _{j}^{i}\otimes \eta _{i}%
\text{, \ }\nabla \eta _{\overline{j}}=\theta _{\overline{j}}^{\overline{i}%
}\otimes \eta _{\overline{i}} , \label{1.9}
\end{equation}%
where $\{\theta _{j}^{i}\}$ denotes the connection $1$-forms with respect to
the frame field. Since $\tau (H^{1,0} )\subset H^{0,1} $, one may write 
\begin{eqnarray}
\tau &=&\tau ^{i}\eta _{i}+\tau ^{\overline{i}}\eta _{\overline{i}}  \notag
\\
&=&A_{\overline{j}}^{i}\theta ^{\overline{j}}\otimes \eta _{i}+A_{j}^{%
\overline{i}}\theta ^{j}\otimes \eta _{\overline{i}}.  \label{1.10}
\end{eqnarray}%
From \cite{We1978pseudo}, we know that $\{\theta ,\theta ^{i},\theta_{j}^{i}\}$
satisfy the following structure equations (cf. also \S 1.4 in \cite{DT2006differential}%
)%
\begin{eqnarray}
d\theta &=&2\sqrt{-1}\theta ^{i}\wedge \theta ^{\overline{i}}\text{,}  \notag
\\
d\theta ^{i} &=&-\theta _{j}^{i}\wedge \theta ^{j}+A_{\bar{j}%
}^{i}\theta \wedge \theta ^{\overline{j}}\text{,}  \label{1.11} \\
d\theta _{j}^{i} &=&-\theta _{k}^{i}\wedge \theta _{j}^{k}+\Pi _{j}^{i} 
\notag
\end{eqnarray}%
with
\begin{eqnarray}
\Pi _{j}^{i} &=&2\sqrt{-1}(\theta ^{i}\wedge \tau ^{\overline{j}}-\tau
^{i}\wedge \theta ^{\overline{j}})+R_{jk\bar{l}}^{i}\theta ^{k}\wedge
\theta ^{\overline{l}}  \notag \\
&&+W_{j\overline{k}}^{i}\theta \wedge \theta ^{\overline{k}%
}-W_{jk}^{i}\theta \wedge \theta ^{k} , \label{1.12}
\end{eqnarray}%
where $W_{j\overline{k}}^{i}=A_{\overline{k},j}^{i}$, $W_{jk}^{i}=A_{j,%
\overline{i}}^{\overline{k}}$ are the covariant derivatives of $A$ and $R_{jk%
\bar{l}}^{i}$ are the components of curvature tensor of the
Tanaka-Webster connection.

\begin{lemma}[{\protect\cite{CDRY2019onharm}}]\label{lemDiv}
Let $(M^{2m+1},H,J,\theta )$ be a pseudo-Hermitian manifold with
Tanaka-Webster connection $\nabla $. Let $X$ and $\rho $ be a vector field
and $1$-form on $M$ respectively. Then%
\begin{equation*}
\di X=\sum_{A=0}^{2m}g_{\theta }(\nabla _{e_{A}}X,e_{A})\text{ \ and \ 
}\delta \rho =-\sum_{A=0}^{2m}(\nabla _{e_{A}}\rho )(e_{A}),
\end{equation*}%
where $\{e_{A}\}_{A=0}^{2m}=\{\xi ,e_{1},...,e_{2m}\}$ is an orthonormal
frame field on $M$. Here $\di(\cdot )$ and $\delta (\cdot )$ denote
the divergence and codifferential respectively.
\end{lemma}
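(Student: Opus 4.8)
My plan is to read this as a comparison between the Tanaka-Webster connection $\nabla$ and the Levi-Civita connection $D$ of the Webster metric $g_\theta$. First I would dispatch the $1$-form statement by reducing it to the vector-field statement: writing $\rho^\sharp$ for the $g_\theta$-dual vector field of $\rho$, metricity of both $\nabla$ and $D$ gives $(\nabla_Y\rho)(Z)=g_\theta(\nabla_Y\rho^\sharp,Z)$ and $(D_Y\rho)(Z)=g_\theta(D_Y\rho^\sharp,Z)$, while the classical identities $\di X=\sum_A g_\theta(D_{e_A}X,e_A)$ and $\delta\rho=-\sum_A (D_{e_A}\rho)(e_A)$ hold for the Levi-Civita connection of any Riemannian metric and give $\delta\rho=-\di(\rho^\sharp)$. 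Hence both assertions of the lemma follow once one shows
\[
\di X \;=\; \sum_{A=0}^{2m} g_\theta(\nabla_{e_A}X, e_A)
\]
for every vector field $X$; equivalently, that $\nabla$ and $D$ have the same trace against $X$.

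For this, set $S:=\nabla-D$, the difference (contorsion) tensor. Since $\nabla g_\theta=0=Dg_\theta$, each $S(Y,\cdot)$ is skew-adjoint with respect to $g_\theta$, so $g_\theta(S(X,e_A),e_A)=0$ for every $A$; and since $D$ is torsion-free, $S(X,Y)-S(Y,X)=T_\nabla(X,Y)$. Combining these,
\[
\sum_{A=0}^{2m} g_\theta(\nabla_{e_A}X, e_A)-\di X \;=\; \sum_{A=0}^{2m} g_\theta(S(e_A,X), e_A) \;=\; \sum_{A=0}^{2m} g_\theta(T_\nabla(e_A,X), e_A),
\]
so the lemma is reduced to the vanishing of the torsion trace $X\mapsto \sum_{A=0}^{2m} g_\theta(T_\nabla(e_A,X),e_A)$, where $e_0=\xi$ and $e_1,\dots,e_{2m}$ is an orthonormal horizontal frame.

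Finally I would compute this trace from property (3) of $\nabla$. By linearity it is enough to treat $X\in H$ and $X=\xi$ separately. If $X\in H$, then for $a\geq 1$ we have $T_\nabla(e_a,X)=2\,d\theta(e_a,X)\xi\in V_\xi$, which is $g_\theta$-orthogonal to $e_a\in H$, and the $A=0$ term is $g_\theta(T_\nabla(\xi,X),\xi)=g_\theta(\tau(X),\xi)=0$ since $\tau$ takes values in $H$; so the trace is $0$. If $X=\xi$, the $A=0$ term vanishes because $\tau\xi=T_\nabla(\xi,\xi)=0$, and the rest is $-\sum_{a=1}^{2m} g_\theta(\tau(e_a),e_a)=-\trace_{H}\tau$, which vanishes because $\tau$ anticommutes with $J$ on $H$ (equivalently $\tau(H^{1,0})\subseteq H^{0,1}$): indeed $\trace_{H}\tau=\trace_{H}(J\tau J^{-1})=\trace_{H}(-\tau)=-\trace_{H}\tau$. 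This last step — seeing that the torsion trace collapses to $\trace_{H}\tau$ and that this quantity is zero — is the only non-formal point; all the rest is bookkeeping. Alternatively one could avoid the contorsion tensor altogether and expand both $\di X$ and $\sum_A g_\theta(\nabla_{e_A}X,e_A)$ directly in an adapted coframe using the structure equations (\ref{1.11})–(\ref{1.12}) together with $\nabla\xi=0$, checking that the Reeb-direction contributions cancel; but the invariant argument above is shorter and isolates the role of $\tau$ transparently.
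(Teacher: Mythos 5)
Your proposal is correct. Note that the paper itself offers no proof of this lemma -- it is imported verbatim from \cite{CDRY2019onharm} -- so there is nothing internal to compare against; your argument is a legitimate self-contained justification. The reduction of the $1$-form identity to the vector-field identity via metricity and $\delta\rho=-\di(\rho^{\sharp})$ is fine, the contorsion bookkeeping ($S$ skew-adjoint in its second slot by metricity of both connections, $S(X,Y)-S(Y,X)=T_{\nabla}(X,Y)$ since $D$ is torsion-free) is correct, and the only substantive point -- the vanishing of the torsion trace $\sum_{A}g_{\theta}(T_{\nabla}(e_{A},X),e_{A})$ -- is handled properly: for horizontal $X$ the horizontal torsion $2\,d\theta(e_{a},X)\xi$ is vertical hence orthogonal to $e_{a}$, while for $X=\xi$ the trace collapses to $-\trace_{H}\tau$, which vanishes because $\tau$ anticommutes with $J$ (equivalently, because $\tau=A^{i}_{\bar j}\theta^{\bar j}\otimes\eta_{i}+A^{\bar i}_{j}\theta^{j}\otimes\eta_{\bar i}$ has no diagonal components in an adapted frame). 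One implicit point you may wish to make explicit is that the divergence defined by $L_{X}\Psi=\di(X)\Psi$ with $\Psi=\theta\wedge(d\theta)^{m}$ coincides with the Riemannian divergence of $g_{\theta}$, since $\Psi$ is a constant multiple of the Riemannian volume form; with that remark the proof is complete.
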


\begin{definition}
A map $f:(M,H,J)\rightarrow (N,\widetilde{J})$ from a CR manifold to a
complex manifold is called a \emph{CR map }(resp. anti CR map) if $%
df(H^{1,0} )\subset T^{1,0}(N)$ (resp. $df(H^{0,1} )\subset T^{1,0}(N)$%
), equivalently, $df_{H}\circ J=\widetilde{J}\circ df_{H}$ (resp. $%
df_{H}\circ J=-\widetilde{J}\circ df_{H}$), where $df_{H}=df\mid _{H}$. In
particular, if $N=\mathbb{C}$, then $f$ is called a \emph{CR function }%
(resp.  anti CR function).
\end{definition}

A map $f:(M,H,J,\theta )\rightarrow N$ from a pseudo-Hermitian manifold to a
smooth manifold is said to be \emph{foliated} if $df(\xi )=0$. Here the target manifold is regarded as a trivial foliation by points.
In \cite{CDRY2019onharm} and \cite{GIP2001cr}, the following type of generalized
holomorphic maps was investigated.

\begin{definition}[\cite{GIP2001cr}]
A smooth map $f:(M,H,J,\theta )\rightarrow (N,\widetilde{J})$ from a
pseudo-Hermitian manifold to a complex manifold is called $(J,\widetilde{J})$%
-holomorphic (resp. anti $(J,\widetilde{J})$-holomorphic) if it satisfies $%
df\circ J=\widetilde{J}\circ df$ (resp. $df\circ J=-\widetilde{J}\circ df$).
\end{definition}

\begin{remark}
Clearly $f:(M,H,J,\theta )\rightarrow (N,\widetilde{J})$ is a $(J,\widetilde{%
J})$-holomorphic map if and only if it is a foliated CR map. Note that $(J,\widetilde{J})$-holomorphic map  is also called CR-holomorphic map in \cite{Pet2002harmonic}. 
\end{remark}

Let $f:(M^{2m+1},H,J,\theta )\rightarrow (N,\widetilde{J},\widetilde{g})$ be
a map from a pseudo-Hermitian manifold to a K\"{a}hler manifold. We have the
partial differentials 
\begin{equation*}
\overline{\partial }_{b}f:H^{0,1}\rightarrow T^{1,0}N\text{, \ }\partial
_{b}f:H^{1,0}\rightarrow T^{1,0}N
\end{equation*}%
defined by 
\begin{equation*}
\overline{\partial }_{b}f=\pi ^{1,0}(df\mid _{H^{0,1}})\text{, \ }\partial
_{b}f=\pi ^{1,0}(df\mid _{H^{1,0}}),
\end{equation*}%
where $\pi ^{1,0}:T^{\cc}N\rightarrow T^{1,0}N$ is the natural projection
morphism. Let $\{e_{0},e_{1},...,e_{2m}\}$ be the adapted frame field on $M$
as given above. Similarly, let $\{\widetilde{e}_{1},...,\widetilde{e}_{2n}\}$
be a local orthonormal frame field on $(N,\widetilde{J},\widetilde{g})$ with 
$\widetilde{e}_{n+1}=\widetilde{J}\widetilde{e}_{1},...,\widetilde{e}_{2n}=\widetilde{J}\widetilde{e}_{n}$. Set%
\begin{equation}
\widetilde{\eta }_{\alpha }=\frac{1}{\sqrt{2}}(\widetilde{e}_{\alpha }-\sqrt{-1}
\widetilde{J}\widetilde{e}_{\alpha })\text{, \ \ }\alpha =1,...,n.
\label{1.13}
\end{equation}%
Let $\{\widetilde{\theta }^{\alpha }\}_{\alpha =1}^{n}$ be the dual frame
field of $\{\widetilde{\eta }_{\alpha }\}_{\alpha =1}^{n}$. In terms of the frame
fields, we can write 
\begin{equation}
\overline{\partial }_{b}f=f_{\overline{j}}^{\alpha }\theta ^{\overline{j}%
}\otimes \widetilde{\eta }_{\alpha }\text{, \ }\partial _{b}f=f_{j}^{\alpha
}\theta ^{j}\otimes \widetilde{\eta }_{\alpha }.  \label{1.14}
\end{equation}%
Then%
\begin{equation}
\mid \overline{\partial }_{b}f\mid ^{2}=\sum_{j,\alpha }f_{\overline{j}%
}^{\alpha }f_{j}^{\overline{\alpha }}\text{, \ }\mid \partial _{b}f\mid
^{2}=\sum_{j,\alpha }f_{j}^{\alpha }f_{\overline{j}}^{\overline{\alpha }},
\label{1.15}
\end{equation}%
or%
\begin{equation}
\begin{array}{c}
\mid \overline{\partial }_{b}f\mid ^{2}=\frac{1}{4}\{\langle
df(e_{j}),df(e_{j})\rangle +\langle df(Je_{j}),df(Je_{j})\rangle \\ 
-2\langle df(Je_{j}),\widetilde{J}df(e_{j})\rangle \} \\ 
=\frac{1}{4}\sum_{A=1}^{2m}\left\{ \langle df(e_{A}),df(e_{A})\rangle
-\langle \widetilde{J}df(e_{A}),df(Je_{A})\rangle \right\},%
\end{array}
\label{1.16}
\end{equation}%
\begin{equation}
\begin{array}{c}
\mid \partial _{b}f\mid ^{2}=\frac{1}{4}\{\langle df(e_{j}),df(e_{j})\rangle
+\langle df(Je_{j}),df(Je_{j})\rangle \\ 
+2\langle df(Je_{j}),\widetilde{J}df(e_{j})\rangle \} \\ 
=\frac{1}{4}\sum_{A=1}^{2m}\left\{ \langle df(e_{A}),df(e_{A})\rangle
+\langle \widetilde{J}df(e_{A}),df(Je_{A})\rangle \right\}.%
\end{array}
\label{1.17}
\end{equation}%
Then we can introduce the following two energy functionals%
\begin{equation}\label{energyfunc1}
E_{\overline{\partial }_{b},\xi }(f)=\int_{M}\left\{ \mid \overline{\partial 
}_{b}f\mid ^{2}+\frac{1}{4}\mid df(\xi )\mid ^{2}\right\} dv_{\theta },
\end{equation}%
and%
\begin{equation}\label{energyfunc2}
E_{\partial _{b},\xi }(f)=\int_{M}\left\{ \mid \partial _{b}f\mid ^{2}+\frac{%
1}{4}\mid df(\xi )\mid ^{2}\right\} dv_{\theta }  ,
\end{equation}%
where $\xi $ is the Reeb vector field of $(M,\theta )$. Clearly $E_{%
\overline{\partial }_{b},\xi }(f)\equiv 0$ (resp. $E_{\partial _{b},\xi
}(f)\equiv 0$) if and only if $f$ is a foliated CR map (resp. foliated anti-CR
map).

\begin{definition}
A critical point of $E_{\overline{\partial }_{b},\xi }(f)$ (resp. $%
E_{\partial _{b},\xi }(f)$) is called a $\overline{\partial }_{b}$-harmonic
map (resp. $\partial _{b}$-harmonic map).
\end{definition}
\begin{remark}
In \cite{LS2019CRanalogue}, Li and Son  introduced the \emph{ $\dbb $-energy functional} $E_{\dbb }(f)$ of $f$. Compared to their definition, we include the term $\frac{1}{4}|df(\xi )|^2$ in \eqref{energyfunc1}.
\end{remark}

For a map $f:(M,H^{1,0} ,\theta )\rightarrow (N,\widetilde{J},\widetilde{g}%
)$, we define its second fundamental form by%
\begin{equation*}
\beta (X,Y)=\widetilde{\nabla }_{Y}df(X)-df(\nabla _{Y}X)
\end{equation*}%
for any $X,Y\in \Gamma (TM)$, where $\nabla $ and $\widetilde{\nabla }$
denote the Tanaka-Webster connection of $M$ and the Levi-Civita connection
of $N$, respectively. The notion of the above second fundamental form has
appeared in literature in various special cases (cf. \cite{EL1983selected}, \cite{DK2010pseudo}, \cite{Pet2002harmonic}, \cite{Pet2009mok}, etc.).

\begin{lemma}
(cf. \cite{donghhharmonicMapsPseudoHermitian2016a}) Let $f:(M,\nabla )\rightarrow (N,\widetilde{\nabla })$
be a map between manifolds with the linear connections. Then%
\begin{equation*}
\widetilde{\nabla }_{X}df(Y)-\widetilde{\nabla }_{Y}df(X)-df([X,Y])=T_{%
\widetilde{\nabla }}(df(X),df(Y))
\end{equation*}%
for any $X,Y\in \Gamma (TM)$, where $T_{\widetilde{\nabla }}$ denotes the
torsion of $\widetilde{\nabla }$. Equivalently, we have%
\begin{equation*}
\beta (X,Y)-\beta (Y,X)=df(T_{\nabla }(X,Y))-T_{\widetilde{\nabla }%
}(df(X),df(Y)).
\end{equation*}
\end{lemma}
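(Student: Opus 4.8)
The plan is to prove the first displayed identity and then deduce the second one by a purely formal manipulation. Throughout, $\widetilde{\nabla}_X df(Y)$ is to be read as the covariant derivative of the section $df(Y)\in\Gamma(f^{-1}TN)$ in the direction $X\in\Gamma(TM)$ with respect to the pullback connection $f^{-1}\widetilde{\nabla}$; recall that for $V\in\Gamma(TN)$ one has $(f^{-1}\widetilde{\nabla})_X(V\circ f)=\widetilde{\nabla}_{df(X)}V$, and that $df$ is then a $1$-form on $M$ with values in $f^{-1}TN$. First I would observe that the left-hand side $\Phi(X,Y):=\widetilde{\nabla}_X df(Y)-\widetilde{\nabla}_Y df(X)-df([X,Y])$ is $C^{\infty}(M)$-bilinear in $(X,Y)$: it is visibly antisymmetric, and replacing $Y$ by $hY$ with $h\in C^{\infty}(M)$ produces, via the Leibniz rule and $[X,hY]=h[X,Y]+(Xh)Y$, the extra terms $(Xh)\,df(Y)$ from $\widetilde{\nabla}_X df(hY)$ and $-(Xh)\,df(Y)$ from $-df([X,hY])$, which cancel. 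Hence $\Phi$ is a tensor, and it suffices to check the identity on the coordinate vector fields $\partial_i,\partial_j$ of a chart on $M$, for which $[\partial_i,\partial_j]=0$.

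Next I would carry out a short coordinate computation. Writing $f^a$ for the components of $f$ in a chart on $N$ and $\widetilde{\Gamma}^c_{ba}$ for the Christoffel symbols of $\widetilde{\nabla}$ (so that $T_{\widetilde{\nabla}}(\partial_b,\partial_a)=(\widetilde{\Gamma}^c_{ba}-\widetilde{\Gamma}^c_{ab})\partial_c$), one has $df(\partial_j)=\partial_j f^a\,\partial_a$ and, by the Leibniz rule for $f^{-1}\widetilde{\nabla}$,
\[
\widetilde{\nabla}_{\partial_i}df(\partial_j)=\partial_i\partial_j f^a\,\partial_a+\partial_i f^b\,\partial_j f^a\,\widetilde{\Gamma}^c_{ba}\,\partial_c .
\]
Antisymmetrizing in $i,j$, the second-derivative terms $\partial_i\partial_j f^a$ cancel, and after relabelling $a\leftrightarrow b$ in one summand the remainder is $\partial_i f^b\,\partial_j f^a(\widetilde{\Gamma}^c_{ba}-\widetilde{\Gamma}^c_{ab})\,\partial_c$, which is precisely $T_{\widetilde{\nabla}}(df(\partial_i),df(\partial_j))$. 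Together with $df([\partial_i,\partial_j])=0$ this gives $\Phi(\partial_i,\partial_j)=T_{\widetilde{\nabla}}(df(\partial_i),df(\partial_j))$, and the first identity follows by tensoriality.

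For the equivalent form I would simply substitute the definition $\beta(X,Y)=\widetilde{\nabla}_Y df(X)-df(\nabla_Y X)$, obtaining
\[
\beta(X,Y)-\beta(Y,X)=-\bigl(\widetilde{\nabla}_X df(Y)-\widetilde{\nabla}_Y df(X)\bigr)+df(\nabla_X Y-\nabla_Y X),
\]
and then use $\nabla_X Y-\nabla_Y X=T_{\nabla}(X,Y)+[X,Y]$ together with the first identity to conclude $\beta(X,Y)-\beta(Y,X)=df(T_{\nabla}(X,Y))-T_{\widetilde{\nabla}}(df(X),df(Y))$.

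The computations are entirely routine; the only point that needs care — the "main obstacle," such as it is — is keeping the conventions straight, in particular remembering that $\widetilde{\nabla}_X df(Y)$ means the pullback connection and matching the index placement in $\widetilde{\Gamma}^c_{ba}$ to the chosen sign convention for $T_{\widetilde{\nabla}}$. A coordinate-free alternative is available: $\Phi(X,Y)$ is exactly the exterior covariant derivative $d^{f^{-1}\widetilde{\nabla}}(df)$ evaluated on $(X,Y)$, and one may identify it with the torsion term by evaluating at a point in a local frame for $TM$ whose brackets vanish there; I would nonetheless present the chart computation as the shortest route. Finally, note that in the applications below $\widetilde{\nabla}$ is the Levi-Civita connection of the K\"{a}hler metric, so $T_{\widetilde{\nabla}}=0$ and the lemma says that $\beta$ fails to be symmetric precisely by $df(T_{\nabla}(\cdot,\cdot))$, the torsion of the Tanaka--Webster connection.
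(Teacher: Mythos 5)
Your proposal is correct. The paper itself gives no proof of this lemma --- it is stated with only a citation to \cite{donghhharmonicMapsPseudoHermitian2016a} --- so there is nothing internal to compare against; your argument (checking $C^{\infty}(M)$-bilinearity of $\Phi$, verifying the identity on coordinate fields via the pullback connection, and then deducing the second identity by substituting the definition of $\beta$ together with $\nabla_XY-\nabla_YX=T_{\nabla}(X,Y)+[X,Y]$) is the standard one and is sound, including the sign bookkeeping for $T_{\widetilde{\nabla}}$. Your closing observation that $T_{\widetilde{\nabla}}=0$ in the paper's applications is exactly how the lemma is used, e.g.\ in the computation $\beta(Je_j,e_j)-\beta(e_j,Je_j)=df(T_{\nabla}(Je_j,e_j))$ in the proof of the first variation formula.
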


Now we want to derive the variation formulas of the energy functionals
defined by (\ref{energyfunc1}) and \eqref{energyfunc2}.

\begin{lemma}\label{lem:EL}
Let $(M^{2m+1},H,J,\theta )$ be a pseudo-Hermitian manifold and $(N,%
\widetilde{J},\widetilde{g})$ be a K\"{a}hler manifold. Suppose $%
\{f_{t}\}_{\mid t\mid <\varepsilon }$ is a family of maps from $M\ $to $N$
with $f_{0}=f$ and $v=(\partial f_{t}/\partial t)\mid _{t=0}\in \Gamma
(f^{-1}TN)$. Then 
\begin{equation*}
\frac{dE_{\overline{\partial }_{b},\xi }(f_{t})}{dt}\mid _{t=0}=-\frac{1}{2}%
\int_{M}\langle v,\trace_{g_{\theta }}\beta -2m\widetilde{J}df(\xi )\rangle
\end{equation*}%
and 
\begin{equation*}
\frac{dE_{\partial _{b},\xi }(f_{t})}{dt}\mid _{t=0}=-\frac{1}{2}%
\int_{M}\langle v,\trace_{g_{\theta }}\beta +2m\widetilde{J}df(\xi )\rangle .
\end{equation*}
\end{lemma}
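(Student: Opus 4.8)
The plan is to compute $\frac{d}{dt}E_{\overline{\partial}_b,\xi}(f_t)\big|_{t=0}$ by differentiating the integrand using the expression \eqref{energyfunc1}, namely $E_{\overline{\partial}_b,\xi}(f)=\int_M\{|\overline{\partial}_b f|^2+\tfrac14|df(\xi)|^2\}\,dv_\theta$, and then recast the result as an $L^2$-pairing with the variation field $v$. Since $N$ is Kähler, we freely use $\widetilde\nabla\widetilde J=0$, which is what lets the $\overline{\partial}_b$- and $\partial_b$-pieces separate cleanly. First I would fix an adapted frame $\{e_0=\xi,e_1,\dots,e_{2m}\}$ and recall from \eqref{1.16} that
\[
4|\overline{\partial}_b f|^2=\sum_{A=1}^{2m}\bigl\{\langle df(e_A),df(e_A)\rangle-\langle \widetilde J df(e_A),df(Je_A)\rangle\bigr\}.
\]
Let $F:(-\varepsilon,\varepsilon)\times M\to N$ be the map $F(t,x)=f_t(x)$, and write $\widetilde\nabla$ for the induced connection on $F^{-1}TN$; the standard symmetry $\widetilde\nabla_{\partial_t}dF(e_A)=\widetilde\nabla_{e_A}dF(\partial_t)$ holds because the Levi-Civita connection is torsion-free and $[\partial_t,e_A]=0$. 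Differentiating the energy density at $t=0$ and using metric-compatibility of $\widetilde\nabla$ (and $\widetilde\nabla\widetilde J=0$ for the cross term), each summand produces a term of the form $\langle\widetilde\nabla_{e_A}v,\,(\cdots)\rangle$.

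Next I would integrate by parts over the compact manifold $M$ using the divergence formula in Lemma~\ref{lemDiv}: for the vector field $X$ dual to the $1$-form $Y\mapsto\langle v,(\cdots)(Y)\rangle$ one has $\int_M\di X\,dv_\theta=0$. Carrying this out, the derivatives land on the second slot, converting $\langle\widetilde\nabla_{e_A}v,\,W\rangle$-type terms into $-\langle v,\widetilde\nabla_{e_A}W\rangle$-type terms, at the cost of extra terms involving $\nabla_{e_A}e_A$ — which is precisely the mechanism that assembles $\operatorname{tr}_{g_\theta}\beta=\sum_{A=0}^{2m}\beta(e_A,e_A)=\sum_A\{\widetilde\nabla_{e_A}df(e_A)-df(\nabla_{e_A}e_A)\}$. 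The key point to track carefully is that the horizontal sum runs only over $A=1,\dots,2m$, whereas $\operatorname{tr}_{g_\theta}\beta$ includes the $A=0$ (Reeb) term $\widetilde\nabla_\xi df(\xi)$; the $\tfrac14|df(\xi)|^2$ term in the functional is designed exactly so that its variation contributes $-\tfrac12\int_M\langle v,\widetilde\nabla_\xi df(\xi)\rangle$, restoring the missing $A=0$ summand and completing the full trace. The remaining contribution comes from the cross term $-\langle\widetilde J df(e_A),df(Je_A)\rangle$: after integration by parts and using that $J$ acts as an (almost) complex structure on $H$ together with the torsion identity $\nabla J=0$ for the Tanaka-Webster connection, the antisymmetric part of $\beta$ on $H$ is governed by $T_\nabla(e_A,Je_A)=2d\theta(e_A,Je_A)\xi=2L_\theta(e_A,e_A)\xi$, which after summing over the adapted frame yields a multiple of $\xi$ and hence a term proportional to $\widetilde\nabla\;$ applied to $df(\xi)$; reorganizing, this is where the clean boundary term $-\tfrac12\int_M\langle v,\,-2m\,\widetilde J df(\xi)\rangle$ emerges, the factor $2m=\dim H$ coming from $\sum_{A=1}^{2m}L_\theta(e_A,e_A)=2m$.

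Finally, assembling the three contributions — the "full" $\operatorname{tr}_{g_\theta}\beta$ reconstituted from the horizontal part plus the Reeb-term bonus, and the $+2m\widetilde J df(\xi)$ correction from the cross term — gives
\[
\frac{dE_{\overline{\partial}_b,\xi}(f_t)}{dt}\Big|_{t=0}=-\frac12\int_M\langle v,\operatorname{tr}_{g_\theta}\beta-2m\widetilde J df(\xi)\rangle\,dv_\theta,
\]
and the formula for $E_{\partial_b,\xi}$ follows verbatim by replacing $|\overline{\partial}_b f|^2$ with $|\partial_b f|^2$, which flips the sign of the cross term in \eqref{1.17} and hence the sign of the $2m\widetilde J df(\xi)$ contribution. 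I expect the main obstacle to be the bookkeeping of the cross term $\langle\widetilde J df(e_A),df(Je_A)\rangle$ under integration by parts: one must commute $\widetilde J$ past $\widetilde\nabla$ (fine, since $N$ is Kähler), handle the non-symmetry of $\beta$ via the torsion of $\nabla$, and keep the Reeb direction separate from the horizontal frame so that the $\tfrac14|df(\xi)|^2$ term exactly supplies the missing $\beta(\xi,\xi)$; a consistency check is that adding the two formulas recovers the first variation of the ordinary energy $E=E_{\overline{\partial}_b,\xi}+E_{\partial_b,\xi}$, whose Euler-Lagrange operator $\operatorname{tr}_{g_\theta}\beta$ has no $\widetilde J df(\xi)$ term.
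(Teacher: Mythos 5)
Your proposal is correct and follows essentially the same route as the paper: differentiate under the integral using the symmetry of the pull-back connection and $\widetilde{\nabla}\widetilde{J}=0$, integrate by parts via the divergence/codifferential formula of Lemma \ref{lemDiv} to assemble $\trace_{g_{\theta}}\beta$ (with the $\tfrac14|df(\xi)|^2$ term supplying the $\beta(\xi,\xi)$ summand), and extract the $\mp 2m\widetilde{J}df(\xi)$ correction from the antisymmetric part of $\beta$ on $H$ via the torsion identity $T_{\nabla}(X,Y)=2d\theta(X,Y)\xi$. The paper organizes the cross term as $\sum_{A}[\widetilde{\nabla}_{e_{A}}(\widetilde{J}\circ df\circ J)](e_{A})=\sum_{j}\widetilde{J}\,df(T_{\nabla}(Je_{j},e_{j}))=-2m\widetilde{J}df(\xi)$, which is exactly the mechanism you describe.
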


\begin{proof}
Set $F:M\times (-\varepsilon ,\varepsilon )\rightarrow N$ by $%
F(x,t)=f_{t}(x) $ for any $x\in M$ and $t\in (-\varepsilon ,\varepsilon )$.
Then%
\begin{eqnarray}
\frac{dE_{\overline{\partial }_{b},\xi }(f_{t})}{dt} &\mid &_{t=0}  \notag \\
&=&\frac{1}{4}\int_{M}\sum_{A=1}^{2m}\{2\langle \widetilde{\nabla }_{\frac{%
\partial }{\partial t}}dF(e_{A}),dF(e_{A})\rangle -\langle \widetilde{J}%
\widetilde{\nabla }_{\frac{\partial }{\partial t}}dF(e_{A}),dF(Je_{A})\rangle
\notag \\
&&-\langle \widetilde{J}dF(e_{A}),\widetilde{\nabla }_{\frac{\partial }{%
\partial t}}dF(Je_{A})\rangle \}dv_{\theta }\text{ }+\frac{1}{2}%
\int_{M}\langle \widetilde{\nabla }_{\frac{\partial }{\partial t}}dF(\xi
),dF(\xi )\rangle dv_{\theta }  \notag \\
&=&\sum_{A=1}^{2m}\int_{M}\frac{1}{4}\{2\langle \widetilde{\nabla }%
_{e_{A}}v,df(e_{A})\rangle -\langle \widetilde{J}\widetilde{\nabla }%
_{e_{A}}v,df(Je_{A})\rangle  \notag \\
&&-\langle \widetilde{J}df(e_{A}),\widetilde{\nabla }_{Je_{A}}v\rangle
\}dv_{\theta }+\frac{1}{2}\int_{M}\langle \widetilde{\nabla }_{\xi }v,df(\xi
)\rangle dv_{\theta }  \label{1.20} \\
&=&\frac{1}{2}\sum_{A=0}^{2m}\int_{M}\{\langle \widetilde{\nabla }%
_{e_{A}}v,df(e_{A})\rangle +\langle \widetilde{\nabla }_{e_{A}}v,\widetilde{J%
}df(Je_{A})\rangle \}dv_{\theta }  \notag \\
&=&\frac{1}{2}\sum_{A=0}^{2m}\int_{M}\{e_{A}\langle v,df(e_{A})\rangle
-\langle v,df(\nabla _{e_{A}}e_{A})\rangle -\langle v,(\widetilde{\nabla }%
_{e_{A}}df)(e_{A})\rangle  \notag \\
&&+e_{A}\langle v,\widetilde{J}df(Je_{A})\rangle -\langle v,\widetilde{J}%
dfJ(\nabla _{e_{A}}e_{A})\rangle -\langle v,(\widetilde{\nabla }_{e_{A}}%
\widetilde{J}dfJ)(e_{A})\rangle \} . \notag
\end{eqnarray}%
Define a $1$-form $\rho $ by $\rho (X)=\langle v,df(X)\rangle +\langle v,%
\widetilde{J}\circ df\circ J(X)\rangle $ for any $X\in TM$. By Lemma \ref{lemDiv}, we
deduce that%
\begin{equation}
\delta \rho =-\sum_{A=0}^{2m}(\nabla _{e_{A}}\rho )(e_{A}).  \label{1.21}
\end{equation}%
It follows from (\ref{1.20}) and (\ref{1.21}) that 
\begin{equation}
\frac{dE_{\overline{\partial }_{b},\xi }(f_{t})}{dt}\mid _{t=0}=-\frac{1}{2}%
\int_{M}\langle v,\sum_{A=0}^{2m}(\widetilde{\nabla }_{e_{A}}df)(e_{A})+[%
\widetilde{\nabla }_{e_{A}}(\widetilde{J}\circ df\circ J)](e_{A})\rangle .
\label{1.22}
\end{equation}%
Next, 
\begin{eqnarray*}
\sum_{A=1}^{2m}[\widetilde{\nabla }_{e_{A}}(\widetilde{J}\circ df\circ
J)](e_{A}) &=&\sum_{A=1}^{2m}\widetilde{\nabla }_{e_{A}}(\widetilde{J}\circ
df\circ Je_{A})-\widetilde{J}\circ df\circ J(\nabla _{e_{A}}e_{A}) \\
&=&\sum_{A=1}^{2m}\widetilde{J}\left[ \widetilde{\nabla }%
_{e_{A}}df(Je_{A})-df(\nabla _{e_{A}}Je_{A})\right] \\
&=&\sum_{A=1}^{2m}\widetilde{J}\beta (Je_{A},e_{A}) \\
&=&\sum_{j=1}^{m}\widetilde{J}\left[ \beta (Je_{j},e_{j})-\beta
(e_{j},Je_{j})\right] \\
&=&\sum_{j=1}^{m}\widetilde{J}df(T_{\nabla }(Je_{j},e_{j})) \\
&=&-2m\widetilde{J}df(\xi ).
\end{eqnarray*}%
Then we get from (\ref{1.22}) the variation formula for $E_{\overline{%
\partial }_{b},\xi }(f)$. The variation formula for $E_{\partial _{b},\xi
}(f)$ may be derived in a similar way. Hence we complete the proof of this
lemma.
\end{proof}
 Define the tension field $\tau _{\dbb,\xi }(f) $ of  $f $ with respect to the functional $E_{\dbb,\xi } $ by 
$$ \tau _{\dbb,\xi }(f):= \trace_{g_{\theta }}\beta -2m\widetilde{J}df(\xi ).$$
Then, according to  Lemma \ref{lem:EL}, $f$ is $\dbb$-harmonic  if and only if $\tau _{\dbb, \xi }(f)=0$.

Note that $\tau _{\overline{\partial }_{b},\xi }(f)=0$ (or $\tau _{\partial
_{b},\xi }(f)=0$) is a system of elliptic differential equations that
differs from the harmonic map equation by a linear first-order term. By a
similar argument as in \cite{Samp1978some}, we have that

\begin{theorem}[Unique continuation]
    Let $f :(M^{2m+1},H,J, \theta ) \to (N^{2n},\widetilde{J},\widetilde{g})$ be a $\dbb $-harmonic map or $\db $-harmonic map. If $f$ is  constant on a non-empty open subset $U$ of $M$, then $f $ is constant on $M$. 
\end{theorem}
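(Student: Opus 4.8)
The plan is to reduce the unique continuation statement for $\dbb$- and $\db$-harmonic maps to Sampson's unique continuation theorem for solutions of second-order elliptic systems whose principal part is the Laplacian, following the scheme of \cite{Samp1978some}. The key point, already emphasized in the paragraph preceding the statement, is that the equation $\tau_{\dbb,\xi}(f)=0$ (respectively $\tau_{\db,\xi}(f)=0$) differs from the genuine pseudoharmonic map equation $\trace_{g_\theta}\beta=0$ only by the zeroth-order-in-derivatives-of-$df$ but first-order-in-$f$ term $\mp 2m\widetilde J\,df(\xi)$. So the strategy is: (i) write everything in local coordinates on $M$ near a point of $\partial U$ and in normal coordinates on $N$, so that $f=(f^a)$ and the equation becomes a system $\Delta_b f^a + (\text{first order in }f)= Q^a(f,\nabla f)$ where $\Delta_b$ is the sublaplacian plus the $\xi\xi$-second derivative (i.e. the full trace Laplacian of $g_\theta$), and $Q^a$ is a quadratic form in the first derivatives of $f$ with coefficients depending on $f$ (the Christoffel symbols of $N$); (ii) observe that $g_\theta$ is a genuine Riemannian metric on the whole of $M$ (the Webster metric, by \eqref{1.5}), so $\sum_{A=0}^{2m}(\widetilde\nabla_{e_A}df)(e_A)=\trace_{g_\theta}\beta$ is an elliptic second-order operator of Laplace type — there is no loss of ellipticity coming from the contact structure because we keep the Reeb direction; (iii) invoke Aronszajn's/Sampson's unique continuation principle.

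More precisely, the steps I would carry out, in order, are as follows. First, fix $p\in\overline U\cap(M\setminus U)$ would be the wrong formulation; instead let $A=\{x\in M: f\equiv f(p_0)\text{ on a neighborhood of }x\}$ where $p_0\in U$, so $A$ is open and nonempty; I must show $A$ is closed, hence $A=M$ by connectedness. Second, take any $p\in\partial A$, choose a coordinate ball $B$ around $p$ with coordinates $x=(x^0,\dots,x^{2m})$ and a coordinate ball in $N$ around $f(p)$ (note $f(p)=f(p_0)$ by continuity), so $f|_B=(f^1,\dots,f^{2n})$. Third, expand the harmonic-map-type operator: using Lemma~\ref{lem:EL}'s expression, $0=\trace_{g_\theta}\beta \mp 2m\widetilde J df(\xi)$ becomes, in these coordinates,
\[
g_\theta^{AB}\partial_A\partial_B f^a \;+\; b_A^{a}(x)\,\partial_A f^a \;+\; c^a_b(x,f)\,\partial_{?}f^b \;=\; -\,\widetilde\Gamma^a_{bc}(f)\,g_\theta^{AB}\partial_A f^b\,\partial_B f^c,
\]
i.e. schematically $\Delta_{g_\theta} f^a = P^a(x,f,\nabla f)$ with $P^a$ affine-quadratic in $\nabla f$ and smooth in $(x,f)$; the extra term $\mp 2m\widetilde J df(\xi)$ only contributes a smooth-coefficient first-order term in $f$ and thus changes nothing about the applicability of the unique continuation principle. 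Fourth, observe that on $B$ we have the estimate $|\Delta_{g_\theta}f^a(x)| \le C\big(|\nabla f(x)| + |f(x)-f(p)|\big)$ for some constant $C$ depending on the $C^1$ norm of $f$ on $B$ and the geometry (here we used that $f(p)$ is a critical constant value and $\widetilde\Gamma(f(p))$ may be assumed $0$ if $N$-coordinates are normal at $f(p)$, so the quadratic term is $O(|f-f(p)|\,|\nabla f|)$; if not normal, the bound $|\Delta_{g_\theta} f|\le C|\nabla f|$ already suffices). This is exactly the hypothesis of Aronszajn's unique continuation theorem for the elliptic operator $\Delta_{g_\theta}$ (whose leading coefficients $g_\theta^{AB}$ are Lipschitz — in fact smooth). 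Since $f-f(p)$ vanishes to infinite order at $p$ (it is identically zero on $A\cap B\neq\emptyset$ which accumulates at $p$), Aronszajn's theorem forces $f\equiv f(p)$ on all of $B$, so $p\in A$. Hence $A$ is closed, and $A=M$.

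The main obstacle, and the only genuinely delicate point, is verifying that the operator $f\mapsto \trace_{g_\theta}\beta(f)\mp 2m\widetilde J df(\xi)$ really is of the form ``Laplace-type elliptic operator $=$ lower order'' with Lipschitz (indeed smooth) leading coefficients, and in particular that no derivatives are lost in the Reeb direction. This is where it matters that we defined the functionals with the extra $\tfrac14|df(\xi)|^2$ term and use the full Webster metric $g_\theta$ on $TM=H\oplus V_\xi$: the second-order part $\sum_{A=0}^{2m}(\widetilde\nabla_{e_A}df)(e_A)$ genuinely involves $e_0=\xi$ and hence is the full (nondegenerate) Laplacian of $g_\theta$, not merely the sublaplacian along $H$ (which would be degenerate and for which Aronszajn's theorem does not apply directly). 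Once this is in place, the remaining work is the standard bootstrap: $C^\infty$-regularity of $f$ (which holds since $f$ is a critical point of a smooth elliptic functional, by elliptic regularity applied to the Euler--Lagrange system), followed by the citation of \cite{Samp1978some}. I expect the write-up to be short, essentially: state that the Euler--Lagrange equation has the required form, cite Aronszajn--Sampson, and conclude by the open-closed argument above.
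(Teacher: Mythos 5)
Your proposal is correct and follows exactly the route the paper intends: the paper gives no detailed proof, merely observing that $\tau_{\overline{\partial}_b,\xi}(f)=0$ is an elliptic system differing from the harmonic map equation by a linear first-order term and citing Sampson's argument, and your write-up simply fleshes out that same reduction (ellipticity of $\trace_{g_\theta}\beta$ via the full Webster metric including the Reeb direction, Aronszajn's unique continuation, and the open--closed argument).
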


Let us recall some  definitions of   generalized  harmonic maps from pseudo-Hermitian manifolds.
\begin{definition}
Let $ (M^{2m+1},H,J, \theta )$ be a pseudo-Hermitian manifold and $ (N^{2n},\widetilde{J},\widetilde{g}) $ be a K\"ahler manifold. Suppose
$f :M \to N$ is a smooth map. We say $f $ is 
\begin{enumerate}[(i)]
    \item (\cite{BDU2001pseudoh}) pseudoharmonic, if $ \trace_{g_\theta }(\pi_{H}\beta )=0 $;
    \item  (\cite{LS2019CRanalogue}) pseudo-Hermitian harmonic, if it is a critical point of $E_{\dbb}(\cdot)$;
    \item (\cite{DK2010pseudo}) $\dbb $-pluriharmonic, if $\beta (X,Y)+\beta (JX,JY)=0$ for all $X,Y \in H $.
\end{enumerate}
\end{definition}
\begin{remark}
Clearly, we have the following
\begin{enumerate}[(a)]
    \item If $f $ is $\dbb $-pluriharmonic, then it must be pseudoharmonic (\cite{dragomirPseudoharmonicMapsVector2010a});
    \item If $f$ is a CR map, then $f$ is pseudo-Hermitian harmonic;
    \item If $f $ is a CR map (resp. anti-CR map), then $f $ is $\dbb $-harmonic (resp. $\db$-harmonic) if and only if $\beta (\xi ,\xi )=0 $ (cf. \eqref{Lf});
    \item If $f $ is foliated, then notions of  $\dbb $-harmonic,  $\db $-harmonic, pseudoharmonic, pseudo-Hermitian harmonic and harmonic maps coincide. 
\end{enumerate}
Besides, as proved in \cite{CDRY2019onharm}, if $f $ is $\dbb $-pluriharmonic, then it is foliated; if $f $ is  $\pm (J, \widetilde{J})$-holomorphic, then it is $\dbb $-pluriharmonic. 
\end{remark}

\section{Lichnerowicz type results}

In this section, we generalize the Lichnerowicz type result in \cite{SSZ2013holomorphic} to the case that the domain manifold is a general
pseudo-Hermitian manifold.

Let $f:(M^{2m+1},H,J,\theta )\rightarrow (N,\widetilde{J},\omega  ^{N})$ be a
smooth map from a pseudo-Hermitian manifold to a K\"{a}hler manifold, where $%
\omega  ^{N}$ is the K\"{a}hler form of $N$, given by $\omega  ^N(X,Y)=\widetilde{g}(JX,Y)$ for all $X,Y \in TN$. 
Set 
\begin{equation}
k_{b}(f)=\mid \partial _{b}f\mid ^{2}-\mid \overline{\partial }_{b}f\mid ^{2}
\label{L1}
\end{equation}%
and 
\begin{equation}
K_{b}(f)=E_{\partial _{b},\xi }(f)-E_{\overline{\partial }_{b},\xi }(f).
\label{L2}
\end{equation}

\begin{lemma}\label{lem3.1}
Under the above notations, we have%
\begin{equation*}
k_{b}(f)=\langle d\theta ,f^{\ast }\omega ^{N}\rangle .
\end{equation*}
\end{lemma}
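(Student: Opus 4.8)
The plan is to expand both sides in the adapted coframe $\{\theta,\theta^i,\theta^{\bar j}\}$ on $M$ and verify the identity componentwise. First I would recall from \eqref{1.14} that $\partial_b f=f_j^\alpha\,\theta^j\otimes\widetilde\eta_\alpha$ and $\overline\partial_b f=f_{\bar j}^\alpha\,\theta^{\bar j}\otimes\widetilde\eta_\alpha$, so that by \eqref{1.15} we have $k_b(f)=\sum_{j,\alpha}\big(f_j^\alpha f_{\bar j}^{\bar\alpha}-f_{\bar j}^\alpha f_j^{\bar\alpha}\big)$. On the other side, pulling back the K\"ahler form $\omega^N$, which in the frame $\{\widetilde\eta_\alpha\}$ is (up to the normalization fixed by \eqref{1.13}) a constant multiple of $\sqrt{-1}\,\widetilde\theta^\alpha\wedge\widetilde\theta^{\bar\alpha}$, gives $f^*\omega^N$ as a $2$-form whose $\theta^j\wedge\theta^{\bar k}$-components are built out of the products $f_j^\alpha f_{\bar k}^{\bar\alpha}$ and $f_{\bar k}^\alpha f_j^{\bar\alpha}$. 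Pairing this against $d\theta=2\sqrt{-1}\,\theta^i\wedge\theta^{\bar i}$ from \eqref{1.11} using the metric $g_\theta$ then extracts exactly the diagonal terms $j=k$, and the bookkeeping of the factors of $\sqrt{-1}$ and $\tfrac12$ should produce $\sum_{j,\alpha}\big(f_j^\alpha f_{\bar j}^{\bar\alpha}-f_{\bar j}^\alpha f_j^{\bar\alpha}\big)=k_b(f)$.

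A cleaner, more invariant route, which I would actually prefer to write, is to work directly with $2$-forms without choosing frames. Since $N$ is K\"ahler, $\omega^N(\cdot,\cdot)=\widetilde g(\widetilde J\cdot,\cdot)$, so for $X,Y\in H$ one has $(f^*\omega^N)(X,Y)=\widetilde g(\widetilde J\,df(X),df(Y))$. Evaluating $\langle d\theta, f^*\omega^N\rangle$ means contracting with $\tfrac12\sum_{A,B}d\theta(e_A,e_B)\,(f^*\omega^N)(e_A,e_B)$ over an adapted orthonormal frame; but $i_\xi d\theta=0$ by \eqref{1.3}, so only horizontal indices contribute, and on $H$ we have $d\theta(X,Y)=L_\theta(X,JY)=g_\theta(X,JY)$ by \eqref{1.2} and \eqref{1.5}. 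Hence $\langle d\theta,f^*\omega^N\rangle=\tfrac12\sum_{A,B}g_\theta(e_A,Je_B)\,\widetilde g(\widetilde J\,df(e_A),df(e_B))$, and since $g_\theta(e_A,Je_B)=\langle e_A,Je_B\rangle$ is the matrix of $J$, the sum over $A$ collapses to $-\tfrac12\sum_B\widetilde g(\widetilde J\,df(Je_B),df(e_B))=\tfrac12\sum_B\widetilde g(\widetilde J\,df(e_B),df(Je_B))$. Relabeling $B$ over $A=1,\dots,2m$ and comparing with the last lines of \eqref{1.16} and \eqref{1.17}, this is precisely $\tfrac12\sum_A\big(\langle\widetilde J\,df(e_A),df(Je_A)\rangle\big)=|\partial_b f|^2-|\overline\partial_b f|^2=k_b(f)$, where the factor $\tfrac14$ in \eqref{1.16}--\eqref{1.17} combines with the four-term identity to give the stated equality.

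I expect the main obstacle to be purely one of normalization: keeping track of the constants coming from the $\tfrac{1}{\sqrt 2}$ in the definitions \eqref{1.8} and \eqref{1.13} of $\eta_j$ and $\widetilde\eta_\alpha$, the coefficient $2\sqrt{-1}$ in $d\theta$, the $\tfrac14$ in the formulas \eqref{1.16} and \eqref{1.17} for $|\overline\partial_b f|^2$ and $|\partial_b f|^2$, and the convention for the inner product $\langle\cdot,\cdot\rangle$ on $2$-forms. There is no analytic or geometric difficulty here — the identity is algebraic and pointwise — so the proof amounts to choosing one consistent set of conventions (I would use the frame-free computation above to minimize the chance of a sign error) and checking that the arithmetic closes. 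A final remark I would include is that the Sasakian hypothesis is not needed for this lemma: it holds on any pseudo-Hermitian manifold, since only $i_\xi d\theta=0$ and the K\"ahler condition on $N$ were used.
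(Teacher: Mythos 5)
Your preferred frame-free computation is essentially the paper's own proof: both expand the pairing $\langle d\theta ,f^{\ast }\omega ^{N}\rangle$ over an adapted orthonormal frame, discard the $\xi$-directions via $i_{\xi }d\theta =0$, convert $d\theta$ on $H$ into $g_{\theta }$ composed with $J$ so that only the diagonal terms $\sum_{i}(f^{\ast }\omega ^{N})(e_{i},Je_{i})=\sum_{i}\langle \widetilde{J}df(e_{i}),df(Je_{i})\rangle$ survive, and then compare with \eqref{1.16}--\eqref{1.17}; the argument is correct, and your closing remark is right that no Sasakian hypothesis is involved (the paper states the lemma for a general pseudo-Hermitian manifold). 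One bookkeeping caveat: with the paper's convention \eqref{1.2}, $L_{\theta }(X,Y)=d\theta (X,J_{b}Y)$ gives $d\theta (X,Y)=-g_{\theta }(X,JY)=g_{\theta }(JX,Y)$ on $H$, not $+g_{\theta }(X,JY)$ as you wrote, and the collapse of the $A$-sum in $\tfrac{1}{2}\sum_{A,B}g_{\theta }(e_{A},Je_{B})\,\widetilde{g}(\widetilde{J}df(e_{A}),df(e_{B}))$ actually yields $+\tfrac{1}{2}\sum_{B}\widetilde{g}(\widetilde{J}df(Je_{B}),df(e_{B}))$ rather than the minus sign you recorded --- these two slips cancel, so your final identity is still correct, but as written each intermediate line is off by a sign.
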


\begin{proof}
Let $\{\xi ,e_{1},...,e_{m},Je_{1},...,Je_{m}\}$ be an adapted frame on $M$.
Using (\ref{1.2}), (\ref{1.16}) and (\ref{1.17}), we deduce that%
\begin{eqnarray*}
\langle d\theta ,f^{\ast }\omega ^{N}\rangle &=&\sum_{i<j}\left\{ (f^{\ast
}\omega ^{N})(e_{i},e_{j})d\theta (e_{i},e_{j})+(f^{\ast }\omega
^{N})(Je_{i},Je_{j})d\theta (Je_{i},Je_{j})\right\} \\
&&+\sum_{i,j}(f^{\ast }\omega ^{N})(e_{i},Je_{j})d\theta (e_{i},Je_{j}) \\
&=&\sum_{i}\langle \widetilde{J}df(e_{i}),df(Je_{i})\rangle \\
&=&k_{b}(f).
\end{eqnarray*}
\end{proof}

The following lemma is useful.

\begin{lemma}[Homotopy Lemma, cf. \cite{Lich1970app}, \cite{EL1983selected}]\label{lem3.2}
Let $f_{t}:M\rightarrow N$ be a family of smooth maps between smooth
manifolds, parameterized by real number $t$, and let $\omega $ be a closed
two-form on $N$. Then%
\begin{equation*}
\frac{\partial }{\partial t}\left( f_{t}^{\ast }\omega \right) =d\left(
f_{t}^{\ast }i(\frac{\partial f_{t}}{\partial t})\omega \right),
\end{equation*}%
where the notation $i(X)$ denotes the interior product with respect to the
vector $X$.
\end{lemma}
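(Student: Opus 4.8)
The plan is to deduce the identity from Cartan's magic formula on the product manifold $M\times\mathbb{R}$. First I would form the smooth map $F:M\times\mathbb{R}\to N$, $F(x,t)=f_t(x)$, and for each $t$ the slice inclusion $\iota_t:M\hookrightarrow M\times\mathbb{R}$, $x\mapsto(x,t)$; these satisfy $F\circ\iota_t=f_t$, so $\iota_t^{\ast}(F^{\ast}\omega)=f_t^{\ast}\omega$. Write $v_t:=\partial f_t/\partial t\in\Gamma(f_t^{-1}TN)$ for the variation field. The vector field $\partial_t:=\partial/\partial t$ is globally defined on $M\times\mathbb{R}$, with flow $\Phi_s(x,t)=(x,t+s)$, so Cartan's formula applied to the $2$-form $F^{\ast}\omega$ reads $\mathcal{L}_{\partial_t}(F^{\ast}\omega)=d\,i_{\partial_t}(F^{\ast}\omega)+i_{\partial_t}\,d(F^{\ast}\omega)$. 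Since $\omega$ is closed, $d(F^{\ast}\omega)=F^{\ast}(d\omega)=0$, whence $\mathcal{L}_{\partial_t}(F^{\ast}\omega)=d\,i_{\partial_t}(F^{\ast}\omega)$.

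Next I would pull this identity back by $\iota_t$ and match each side with a term in the statement. On the left, since $\Phi_s\circ\iota_t=\iota_{t+s}$ and $\iota_t^{\ast}$ (being pullback by a map independent of $s$) commutes with $\frac{d}{ds}$, one gets $\iota_t^{\ast}\mathcal{L}_{\partial_t}(F^{\ast}\omega)=\frac{d}{ds}\big|_{s=0}(\Phi_s\circ\iota_t)^{\ast}(F^{\ast}\omega)=\frac{d}{ds}\big|_{s=0}f_{t+s}^{\ast}\omega=\frac{\partial}{\partial t}(f_t^{\ast}\omega)$. On the right, $\iota_t^{\ast}$ commutes with $d$, so it remains to identify $\iota_t^{\ast}\big(i_{\partial_t}(F^{\ast}\omega)\big)$. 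Evaluating on $X\in T_xM$ and using $dF(\partial_t)|_{(x,t)}=v_t(x)$ together with $dF(d\iota_t X)=df_t(X)$, one finds $\big(\iota_t^{\ast}i_{\partial_t}(F^{\ast}\omega)\big)(X)=(F^{\ast}\omega)(\partial_t,d\iota_t X)=\omega\big(v_t(x),df_t(X)\big)$, which is exactly the value of the $1$-form $f_t^{\ast}\big(i(v_t)\omega\big)$ on $M$ obtained by inserting the variation field into $\omega$ and restricting to the image of $df_t$. Combining the two computations yields $\frac{\partial}{\partial t}(f_t^{\ast}\omega)=d\big(f_t^{\ast}i(v_t)\omega\big)$, which is the asserted formula.

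The computation is short; the only point needing care — and the closest thing to an obstacle — is purely notational, namely that $i(\partial f_t/\partial t)\omega$ must be read as a $1$-form along the map $f_t$ (by the prescription above) rather than as a form on $N$, and that the interchanges of $\iota_t^{\ast}$ with $d$ and with $\partial/\partial t$ must be noted. An alternative fully elementary route avoids the product manifold altogether: in local coordinates $(x^i)$ on $M$ and $(y^a)$ on $N$ one writes $\omega=\tfrac12\omega_{ab}\,dy^a\wedge dy^b$, differentiates $f_t^{\ast}\omega=\tfrac12(\omega_{ab}\circ f_t)\,\partial_i f_t^a\,\partial_j f_t^b\,dx^i\wedge dx^j$ in $t$, and compares with $d\big(f_t^{\ast}i(v_t)\omega\big)$; the terms involving second mixed derivatives of $f_t$ cancel by antisymmetry of $dx^i\wedge dx^j$, and the remaining terms agree after using the closedness relation $\partial_c\omega_{ab}+\partial_a\omega_{bc}+\partial_b\omega_{ca}=0$. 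Either way the lemma is classical, so I would simply record the Cartan-formula proof for brevity.
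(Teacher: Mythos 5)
Your proof is correct. The paper itself gives no proof of this lemma --- it is quoted as a classical fact with references to Lichnerowicz and Eells--Lemaire --- and your Cartan-formula argument on $M\times\mathbb{R}$ is precisely the standard proof found in those sources; the pullback identifications $\iota_t^{\ast}\mathcal{L}_{\partial_t}(F^{\ast}\omega)=\tfrac{\partial}{\partial t}(f_t^{\ast}\omega)$ and $\iota_t^{\ast}i_{\partial_t}(F^{\ast}\omega)=f_t^{\ast}i(v_t)\omega$ are carried out correctly, and you rightly flag the one genuine subtlety, namely that $i(\partial f_t/\partial t)\omega$ must be read as a $1$-form along the map since the variation field lives in $f_t^{-1}TN$ rather than on $N$.
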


\begin{lemma}
Let $f_{t}:(M^{2m+1},H,J,\theta )\rightarrow (N,\widetilde{J},\omega ^{N})$
be a family of smooth maps from a compact pseudo-Hermitian manifold to a K%
\"{a}hler manifold. Then%
\begin{equation*}
\frac{d}{dt}K_{b}(f_{t})=2m\int_{M}\omega ^{N}(v_{t},df_{t}(\xi ))dv_{\theta
},
\end{equation*}%
where $v_{t}=\partial f_{t}/\partial t$.
\end{lemma}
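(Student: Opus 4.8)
The plan is to compute $\frac{d}{dt}K_b(f_t)$ by combining Lemma \ref{lem3.1}, which identifies the pointwise density $k_b(f_t) = \langle d\theta, f_t^*\omega^N\rangle$, with the Homotopy Lemma (Lemma \ref{lem3.2}), which controls the $t$-derivative of the pulled-back K\"ahler form. Since $K_b(f_t) = \frac{1}{2}\int_M k_b(f_t)\, dv_\theta = \frac{1}{2}\int_M \langle d\theta, f_t^*\omega^N\rangle\, dv_\theta$, differentiating under the integral sign gives
\[
\frac{d}{dt}K_b(f_t) = \frac{1}{2}\int_M \Big\langle d\theta, \frac{\partial}{\partial t}(f_t^*\omega^N)\Big\rangle\, dv_\theta = \frac{1}{2}\int_M \big\langle d\theta,\, d(f_t^* i(v_t)\omega^N)\big\rangle\, dv_\theta,
\]
where $v_t = \partial f_t/\partial t$. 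The one-form $\phi_t := f_t^* i(v_t)\omega^N$ satisfies $\phi_t(X) = \omega^N(v_t, df_t(X))$ for $X \in TM$.

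First I would rewrite the integrand using the formal adjoint relation: $\int_M \langle d\theta, d\phi_t\rangle\, dv_\theta = \int_M \langle \delta\, d\theta, \phi_t\rangle\, dv_\theta$, valid since $M$ is compact and closed (Stokes). So the key computation reduces to evaluating $\delta\, d\theta$, the codifferential of the two-form $d\theta$ with respect to the Webster metric $g_\theta$. Using Lemma \ref{lemDiv} and the structure equation $d\theta = 2\sqrt{-1}\,\theta^i\wedge\theta^{\bar i}$ together with $\nabla\xi = 0$ and $\nabla g_\theta = 0$, one expects $\delta\, d\theta$ to be a multiple of $\theta$; the precise computation — I would evaluate $(\delta\, d\theta)(e_A)$ against the adapted frame, using that $d\theta(e_i, Je_i) = L_\theta(e_i,e_i) = 1$ and $i_\xi d\theta = 0$ — should give $\delta\, d\theta = -\,\di(J) = \dots$; in fact the cleanest route is to recall that for the Tanaka–Webster connection one has $(\nabla_X d\theta)(Y,Z)$ expressible via the torsion, and that $\sum_A (\nabla_{e_A} d\theta)(e_A, \cdot)$ collapses to a $\theta$-term because the horizontal part of $d\theta$ is $\nabla$-parallel. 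The outcome I anticipate is $\delta\, d\theta = -2m\,\theta$ up to sign, so that $\langle \delta\, d\theta, \phi_t\rangle = -2m\, \phi_t(\xi) = -2m\,\omega^N(v_t, df_t(\xi))$, and absorbing the factor $\frac{1}{2}$ appropriately (watching the normalization in \eqref{L2} versus \eqref{energyfunc1}) yields exactly $\frac{d}{dt}K_b(f_t) = 2m\int_M \omega^N(v_t, df_t(\xi))\, dv_\theta$.

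An alternative, perhaps more transparent, route avoids the codifferential: expand $\langle d\theta, d\phi_t\rangle$ directly in the adapted frame via $\langle d\theta, d\phi_t\rangle = \sum_{A<B} d\theta(e_A,e_B)\, d\phi_t(e_A,e_B)$, then use $d\phi_t(X,Y) = X\phi_t(Y) - Y\phi_t(X) - \phi_t([X,Y])$ together with $d\theta(e_i,e_j) = d\theta(Je_i,Je_j) = \delta_{ij}\cdot 0$ off-diagonal and $d\theta(e_i, Je_j) = \delta_{ij}$. Pairing against $d\theta$ kills everything except the $\theta$-component of $d\phi_t$, and after an integration by parts against the horizontal vector fields one is left with the vertical contribution, which is $2m$ times $\omega^N(v_t, df_t(\xi))$ integrated over $M$ — here the torsion term $T_\nabla(e_i, Je_i) = 2d\theta(e_i,Je_i)\xi = 2\xi$ appears just as it did in the proof of Lemma \ref{lem:EL}, and summing over $i=1,\dots,m$ produces the factor $2m$.

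\textbf{The main obstacle} I expect is bookkeeping the constants and signs: the functional in \eqref{L2}–\eqref{energyfunc1} carries a $\frac{1}{2}$ (or lack thereof) that must be tracked consistently with Lemma \ref{lem3.1}, and the computation of $\delta\, d\theta$ (or equivalently the integration-by-parts step that isolates the vertical term) must correctly account for the Tanaka–Webster torsion contribution $T_\nabla(X,Y) = 2d\theta(X,Y)\xi$, which is precisely what generates the coefficient $2m$. Care is also needed because $d\phi_t$ is a genuine exterior derivative (no connection), whereas $d\theta$ pairs naturally through the metric; reconciling these via $\langle d\theta, d\phi_t\rangle = \langle \delta d\theta, \phi_t\rangle + \di(\cdot)$ and discarding the divergence term on the closed manifold $M$ is the crux. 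Everything else is a direct, if slightly lengthy, frame computation that mirrors the argument already carried out in Lemma \ref{lem:EL}.
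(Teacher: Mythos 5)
Your proposal follows essentially the same route as the paper: express $K_b(f_t)$ as the integral of $\langle d\theta, f_t^*\omega^N\rangle$ (Lemma \ref{lem3.1}), differentiate via the Homotopy Lemma, pass to the adjoint on the closed manifold to reduce everything to $\delta d\theta$, and show that $\delta d\theta$ vanishes on $H$ and equals $2m$ on $\xi$. Two bookkeeping points to settle when writing it up: the paper's proof uses the normalization $K_b(f)=\int_M k_b(f)\,dv_\theta$ from \eqref{energyfunc1}--\eqref{energyfunc2} (no factor $\tfrac12$), and the computation of $\delta d\theta$ must be carried out with the Levi-Civita connection of $g_\theta$ and then converted to the Tanaka--Webster connection — the term $\nabla^{\theta}_{e_A}\xi=\tau(e_A)+Je_A$ is exactly what produces $(\delta d\theta)(\xi)=2m$, whereas a naive application of Lemma \ref{lemDiv} to the $\nabla$-parallel form $d\theta$ would wrongly give zero.
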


\begin{proof}
In terms of Lemma \ref{lem3.1} and Lemma \ref{lem3.2}, we have
\begin{eqnarray*}
\frac{d}{dt}K_{b}(f_{t}) &=&\int_{M}\langle \frac{\partial }{\partial t}%
f_{t}^{\ast }\omega ^{N},d\theta \rangle dv_{\theta } \\
&=&\int_{M}\langle d\left( f_{t}^{\ast }i(\frac{\partial f_{t}}{\partial t}%
)\omega ^{N}\right) ,d\theta \rangle dv_{\theta } \\
&=&\int_{M}\langle f_{t}^{\ast }i(\frac{\partial f_{t}}{\partial t})\omega
^{N},\delta d\theta \rangle dv_{\theta }.
\end{eqnarray*}
Recall that (cf. \cite{DT2006differential})
\begin{equation*}
\nabla _{X}^{\theta }Y=\nabla _{X}Y-(d\theta (X,Y)+A(X,Y))\xi +\theta
(Y)\tau (X)+\theta (X)JY+\theta (Y)JX
\end{equation*}
for any $X,Y\in \Gamma (TM)$, where $\nabla ^{\theta }$ denotes the Levi-Civita connection of $g_{\theta }$.
  Let $\{e_{A}\}_{A=0}^{2m}=\{\xi
,e_{1},...,e_{2m}\}$ be an adapted frame field in $M$. For $X\in HM$, we
compute 
\begin{eqnarray*}
(\delta d\theta )(X) &=&-\sum_{A=0}^{2m}(\nabla _{e_{A}}^{\theta }d\theta
)(e_{A},X) \\
&=&-\sum_{A=0}^{2m}\{e_{A}d\theta (e_{A},X)-d\theta (\nabla _{e_{A}}^{\theta
}e_{A},X)-d\theta (e_{A},\nabla _{e_{A}}^{\theta }X)\} \\
&=&-\sum_{A=1}^{2m}\{e_{A}d\theta (e_{A},X)-d\theta (\nabla
_{e_{A}}e_{A},X)-d\theta (e_{A},\nabla _{e_{A}}X)\} \\
&=&-\sum_{A=1}^{2m}\left( \nabla _{e_{A}}d\theta \right) (e_{A},X) \\
&=&0,
\end{eqnarray*}
where the last equality is due to $\nabla d\theta =0$. Next,
\begin{eqnarray*}
\left( \delta d\theta \right) (\xi ) &=&\sum_{A=1}^{2m}d\theta (e_{A},\nabla
_{e_{A}}^{\theta }\xi ) \\
&=&\sum_{A=1}^{2m}d\theta (e_{A},\tau (e_{A})+Je_{A}) \\
&=&2m,
\end{eqnarray*}%
since
\begin{eqnarray*}
&&d\theta (e_{i},\tau (e_{i}))+d\theta (Je_{i},\tau Je_{i}) \\
&=&d\theta (e_{i},\tau (e_{i}))-d\theta (e_{i},\tau (e_{i})) \\
&=&0\text{.}
\end{eqnarray*}%
Therefore,
\begin{eqnarray*}
\frac{d}{dt}K_{b}(f_{t}) &=&\int_{M}\langle f_{t}^{\ast }i(\frac{\partial
f_{t}}{\partial t})\omega ^{N},\delta d\theta \rangle dv_{\theta } \\
&=&\int_{M}\langle f_{t}^{\ast }[\omega ^{N}(v_{t},\cdot )],\delta d\theta
\rangle dv_{\theta } \\
&=&\int_{M}\omega ^{N}(v_{t},df_{t}(\xi ))\delta d\theta (\xi
)dv_{\theta } \\
&=&2m\int_{M}\omega ^{N}(v_{t},df_{t}(\xi ))dv_{\theta }.
\end{eqnarray*}
\end{proof}

\begin{corollary}
 Let $f_{t}:(M^{2m+1},H,J,\theta )\rightarrow (N,\widetilde{J},\omega ^{N})$
be a family of smooth maps from a compact pseudo-Hermitian manifold to a K\"{a}hler manifold, such that $df_t(\xi )=0$ for every $t$. We refer to such $\{f_t\}$ as a family of foliated maps. Then $K_b(f_t)$ is a constant.
\end{corollary}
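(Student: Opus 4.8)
The plan is to deduce this corollary directly from the preceding lemma. By the lemma, for any family of smooth maps $f_t:(M^{2m+1},H,J,\theta )\rightarrow (N,\widetilde{J},\omega ^{N})$ from a compact pseudo-Hermitian manifold to a K\"ahler manifold, one has
\[
\frac{d}{dt}K_{b}(f_{t})=2m\int_{M}\omega ^{N}(v_{t},df_{t}(\xi ))dv_{\theta },
\]
where $v_{t}=\partial f_{t}/\partial t$. Since the family $\{f_t\}$ is foliated, i.e. $df_t(\xi )=0$ for every $t$, the integrand $\omega ^{N}(v_{t},df_{t}(\xi ))$ vanishes identically on $M$ for every $t$. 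Hence $\frac{d}{dt}K_{b}(f_{t})=0$ for all $t$.

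Therefore the function $t\mapsto K_{b}(f_{t})$ has vanishing derivative, so it is constant in $t$. This completes the proof.

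There is essentially no obstacle here: the corollary is an immediate specialization of the lemma to the case $df_t(\xi)=0$. The only point worth noting is that one should make sure the family $\{f_t\}$ is smooth enough in $t$ for the differentiation under the integral sign in the lemma to apply (which is already assumed in the statement of the lemma), but no further argument is needed.
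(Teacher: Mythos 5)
Your proof is correct and is exactly the intended argument: the paper states this as an immediate corollary of the preceding lemma, whose formula $\frac{d}{dt}K_{b}(f_{t})=2m\int_{M}\omega ^{N}(v_{t},df_{t}(\xi ))dv_{\theta }$ vanishes once $df_t(\xi)=0$. Nothing further is needed.
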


Thus, if $f_t:M \to N$ is a family of foliated maps, then 
$$ \frac{d}{dt} E_{\dbb,\xi }(f_t)=\frac{d}{dt} E_{\db,\xi }(f_t)=\frac{1}{2} \frac{d}{dt} E(f_t), $$
where $E(f)=E_{\dbb,\xi }(f)+E_{\db,\xi }(f)$ is the usual energy functional of $f$. Then, the following theorems are evident.

\begin{theorem}
    ~
\begin{enumerate}[(i)]
    \item The  $E_{\dbb,\xi }$-, $E_{\db,\xi }$- and $E$-critical points through foliated maps coincide. Moreover,  in a  given foliated homotopy class the $E_{\dbb,\xi }$-, $E_{\db,\xi }$- and $E$-minima coincide.
    \item If $f $ is $\pm (J, \widetilde{J})$-holomorphic, then it is  an absolute minimum of  $E$  in its foliated class.
\end{enumerate} 
\end{theorem}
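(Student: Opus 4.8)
The plan is to deduce everything from the Corollary above --- that $K_{b}(f_{t})$ is constant along any family of foliated maps --- together with the two algebraic identities $E = E_{\dbb,\xi} + E_{\db,\xi}$ and $K_{b} = E_{\db,\xi} - E_{\dbb,\xi}$, which combine to give $E = 2E_{\dbb,\xi} + K_{b} = 2E_{\db,\xi} - K_{b}$.

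First I would observe that on a fixed foliated homotopy class $\mathcal{C}$ the functional $K_{b}$ is an honest constant: any two maps in $\mathcal{C}$ are joined by a homotopy through foliated maps, and along such a homotopy $K_{b}$ is constant by the Corollary. Writing $c$ for this common value, the identity $E = 2E_{\dbb,\xi} + c = 2E_{\db,\xi} - c$ holds on all of $\mathcal{C}$; hence minimizing any one of $E_{\dbb,\xi}$, $E_{\db,\xi}$, $E$ over $\mathcal{C}$ is equivalent to minimizing any other, and the minimizers coincide, which is the second assertion of (i). For the first assertion, let $f$ be foliated and let $\{f_{t}\}$ be any variation of $f$ through foliated maps; differentiating $E_{\db,\xi}(f_{t}) - E_{\dbb,\xi}(f_{t}) = K_{b}(f_{t})$ and using that the right-hand side is constant in $t$ gives $\tfrac{d}{dt}E_{\dbb,\xi}(f_{t}) = \tfrac{d}{dt}E_{\db,\xi}(f_{t}) = \tfrac12\tfrac{d}{dt}E(f_{t})$, so the vanishing of the first variation at $t=0$ happens simultaneously for the three functionals; thus the three classes of critical points through foliated maps coincide.

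For (ii), if $f$ is $(J,\widetilde{J})$-holomorphic then by the Remark it is a foliated CR map, so $E_{\dbb,\xi}(f) = 0$ and therefore $E(f) = c$, where $c$ is the constant value of $K_{b}$ on the foliated class of $f$. For any foliated map $g$ in that class we get $E(g) = 2E_{\dbb,\xi}(g) + c \geq c = E(f)$ since $E_{\dbb,\xi} \geq 0$; hence $f$ is an absolute minimum of $E$ in its foliated class. The anti-$(J,\widetilde{J})$-holomorphic case is symmetric, using instead $E_{\db,\xi}(f) = 0$, $E(f) = -c$, and $E(g) = 2E_{\db,\xi}(g) - c \geq -c = E(f)$. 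The whole argument is essentially bookkeeping; the only point requiring a moment's care is the well-definedness of the constant $c$, i.e. that $K_{b}$ does not merely stay constant along a prescribed family but takes a single value on an entire foliated homotopy class --- which is exactly the path-connectedness of that class through foliated homotopies.
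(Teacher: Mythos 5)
Your proposal is correct and follows essentially the same route as the paper: both deduce everything from the constancy of $K_{b}$ along foliated families, using the identities $E=E_{\overline{\partial }_{b},\xi }+E_{\partial _{b},\xi }$ and $K_{b}=E_{\partial _{b},\xi }-E_{\overline{\partial }_{b},\xi }$ to equate first variations (hence critical points) and differences of values (hence minima), and (ii) is the same observation that a $\pm (J,\widetilde{J})$-holomorphic map annihilates one of the two nonnegative partial energies. Your remark on the well-definedness of the constant value of $K_{b}$ on an entire foliated homotopy class is a point the paper leaves implicit, but it is immediate from the definition of that class.
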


\begin{proof}
   (i) For any $f, f_0$ in the same foliated homotopy class, the following equality holds:
    $$E_{\dbb,\xi }(f)-E_{\dbb,\xi }(f_0)=E_{\db,\xi }(f)-E_{\db,\xi }(f_0) .$$
   Consequently, if $E_{\dbb,\xi }(f_0)\leq E_{\dbb,\xi }(f)  $ for all $f$, 
then $E_{\db,\xi }(f_0)\leq E_{\db,\xi }(f)  $ for all $f$. Similarly, from the equality 
 $$E(f)-E(f_0)=2E_{\dbb,\xi }(f)-2E_{\dbb,\xi }(f_0),$$
  we conclude that $E_{\dbb,\xi }$ and $E$-minima coincide.

   (ii) A $ (J, \widetilde{J})$-holomorphic map (resp. anti-$ (J, \widetilde{J})$-holomorphic map) satisfies $E_{\dbb,\xi }(f)=0$ (resp. $E_{\db,\xi }(f)=0$) and is therefore an absolute minimum of $E$ in its foliated class.
\end{proof}

\begin{theorem}
    Let $f_{t}:(M^{2m+1},H,J,\theta )\rightarrow (N,\widetilde{J},\omega ^{N})$ be a family of foliated maps from a pseudo-Hermitian  manifold to a K\"ahler manifold with $0 \leq t \leq 1$. Suppose $f_0$ is $ (J, \widetilde{J})$-holomorphic and $f_1$ is anti-$ (J, \widetilde{J})$-holomorphic, then $f_0$ and $f_1$ are constant. In  particular, any   $\pm (J, \widetilde{J})$-holomorphic map in a trivial foliated homotopy class is constant. 
\end{theorem}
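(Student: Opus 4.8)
The plan is to combine the homotopy invariance of $K_b$ along foliated families (established in the Corollary above, which says $K_b(f_t)$ is constant for any family of foliated maps) with the one-sided vanishing of the partial energies that $(J,\widetilde J)$-holomorphicity forces at the two endpoints; throughout I assume, as in that Corollary, that $M$ is compact (and, as is customary, connected).

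First I would record the two endpoint constraints. Since $f_0$ is $(J,\widetilde J)$-holomorphic it is a foliated CR map, so $E_{\dbb,\xi}(f_0)=0$ by the remark following \eqref{energyfunc2}, whence $K_b(f_0)=E_{\db,\xi}(f_0)-E_{\dbb,\xi}(f_0)=E_{\db,\xi}(f_0)\ge 0$. Symmetrically, $f_1$ being anti-$(J,\widetilde J)$-holomorphic is a foliated anti-CR map, so $E_{\db,\xi}(f_1)=0$ and $K_b(f_1)=-E_{\dbb,\xi}(f_1)\le 0$. Because $\{f_t\}$ is a family of foliated maps, the Corollary gives $K_b(f_0)=K_b(f_1)$; together with the two inequalities this pins both values to $0$, so $E_{\db,\xi}(f_0)=0$ and $E_{\dbb,\xi}(f_1)=0$.

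Next I would upgrade these vanishings to $df_0\equiv 0$ and $df_1\equiv 0$. For $f_0$ one now has both $E_{\dbb,\xi}(f_0)=0$ and $E_{\db,\xi}(f_0)=0$; since each of these energies is the integral of a sum of nonnegative terms, it follows that $\dbb f_0=\db f_0=0$ and $df_0(\xi)=0$ pointwise. From $\db f_0=0$ we get $df_0(H^{1,0})\subseteq T^{0,1}N$, and conjugating, $df_0(H^{0,1})\subseteq T^{1,0}N$; combined with $\dbb f_0=0$, which says $df_0(H^{0,1})\subseteq T^{0,1}N$, this forces $df_0(H^{0,1})\subseteq T^{1,0}N\cap T^{0,1}N=\{0\}$, hence $df_0|_H=0$. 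With $df_0(\xi)=0$ and the splitting \eqref{1.4} this gives $df_0\equiv 0$, so $f_0$ is constant. (Equivalently: $f_0$ is then both a foliated CR map and a foliated anti-CR map, which immediately forces $df_0|_H=0$ since $\widetilde J$ is invertible.) The identical argument applied to $f_1$, using $E_{\db,\xi}(f_1)=0$ together with $E_{\dbb,\xi}(f_1)=0$, shows $f_1$ is constant.

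For the ``in particular'' clause I would note that a constant map has vanishing differential, hence is simultaneously $(J,\widetilde J)$-holomorphic and anti-$(J,\widetilde J)$-holomorphic. If $f$ lies in the trivial foliated homotopy class, choose a family $\{f_t\}_{0\le t\le1}$ of foliated maps with $f_0=f$ and $f_1$ a constant map; applying the theorem to this family if $f$ is $(J,\widetilde J)$-holomorphic, and to the reparametrized family $t\mapsto f_{1-t}$ if $f$ is anti-$(J,\widetilde J)$-holomorphic, one concludes in either case that $f$ is constant. I do not expect a genuine analytic obstacle here: the argument is a soft Lichnerowicz-type rigidity, and the only points requiring care are the standing compactness (and connectedness) of $M$, needed for $K_b$ to be a foliated homotopy invariant, and the elementary linear-algebra step that $\db f_0=\dbb f_0=0$ forces $df_0|_H=0$ via complex conjugation.
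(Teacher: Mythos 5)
Your proposal is correct and follows essentially the same route as the paper: both use the foliated homotopy invariance of $K_b$ together with $E_{\overline{\partial}_b,\xi}(f_0)=E_{\partial_b,\xi}(f_1)=0$ to squeeze $E_{\partial_b,\xi}(f_0)=-E_{\overline{\partial}_b,\xi}(f_1)$ between $0$ and $0$, concluding that the full energy $E$ vanishes at both endpoints. The paper simply stops at $E(f_0)=E(f_1)=0$, whereas you spell out the (correct) pointwise linear algebra showing $\overline{\partial}_b f=\partial_b f=0$ forces $df|_H=0$; your explicit flagging of the standing compactness/connectedness hypotheses is a reasonable addition but not a substantive difference.
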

\begin{proof}
    Since $E_{\dbb,\xi }(f_0)=E_{\db,\xi }(f_1)=0$, $0 \leq E_{\db,\xi }(f_0)=-E_{\dbb,\xi }(f_1) \leq 0$, which leads to $ E_{\db,\xi }(f_0)=E_{\dbb,\xi }(f_1) =0$. Thus, $E(f_0)=E(f_1)=0$.
\end{proof}

\section{Commutation relations}\label{se:commut}

 In this section, we derive the commutation relations for maps from a pseudo-Hermitian manifold to a K\"ahler manifold.  While  the case of a map from a pseudo-Hermitian manifold to a general Riemannian manifold has been addressed in \cite{CDRY2019onharm}, we present it here using our notation for the sake of clarity and convenience.

 Let $f :(M^{2m+1},H,J, \theta ) \to (N^{2n},\widetilde{J},\widetilde{g})$ be a smooth map,  where $(M^{2m+1},H,J, \theta ) $ is a pseudo-Hermitian manifold and $(N^{2n},\widetilde{J},\widetilde{g})$ is a K\"ahler manifold.
Take $\{\theta^{i}\}$ as a local adapted coframe on $M$, and $\{\widetilde{\omega }^{\alpha }\}$ as a local orthonormal coframe on $N$ as aforementioned.
Unless otherwise stated, we adhere to the following index conventions:
$$
\begin{aligned}
&  A, B, C,D=0,1,\dots,m, \bar{1},\dots,\bar{m} ; \\
&  i, j, k,l,s =1,\dots, m;\\
& I, J, K,L,P=1,\dots,n, \bar{1},\dots,\bar{n} ; \\
&  \alpha ,\beta ,\gamma ,\sigma =1,\ldots   n,
\end{aligned}
$$
and employ the summation convention on repeated indices.
The structure equations for Levi-Civita connection $\widetilde{\nabla } $ on $(N, \widetilde{J}) $ can be expressed by 
$$
\begin{gathered}
d \widetilde{\omega}^\alpha=-\widetilde{\omega}_\beta^\alpha \wedge \widetilde{\omega}^\beta, \quad \widetilde{\omega}_\beta^\alpha+\widetilde{\omega}_{\bar{\alpha}}^{\bar{\beta}}=0, \\
d \widetilde{\omega}_\beta^\alpha=-\widetilde{\omega}_\gamma^\alpha \wedge \widetilde{\omega}_\beta^\gamma+\widetilde{\Omega}_\beta^\alpha,
\end{gathered}
$$
where $\widetilde{\Omega}_\beta^\alpha=\widetilde{R}_{\beta \gamma \bar{\sigma }}^\alpha \widetilde{\omega}^\gamma \wedge \widetilde{\omega}^{\bar{\sigma }}$. 
 Since $N $ is K\"ahler, the only possibly non-zero components of $ \widetilde{R}_{IJK}^{L} $ are 
 $$ \widetilde{R}_{\beta \gamma \bar{\sigma }}^\alpha ,\quad \widetilde{R}_{\bar \beta \gamma \bar{\sigma }}^{\bar \alpha} ,\quad \widetilde{R}_{\beta \bar \gamma \sigma }^\alpha ,\quad \widetilde{R}_{\bar \beta \bar \gamma \sigma }^{\bar \alpha }. $$
 Set 
 $$ \widetilde{R}_{IJKL}=\widetilde{g}( \widetilde{R}(\tilde{\eta }_{K}, \tilde{\eta }_{L})\tilde{\eta }_{J},\tilde{\eta }_{I} )=\widetilde{g}_{PI}  \widetilde{R}_{JKL }^{P }.$$
 Let
\begin{equation}
    \begin{aligned}
    d f & =f_A^{I} \theta^A \otimes \widetilde{\eta } _{I}, \\
    \beta  & =f_{A B}^{I} \theta^A \otimes \theta^B \otimes \widetilde{\eta }_{I}, \\
    \widetilde{\nabla} \beta  & =f_{A B C}^{I} \theta^A \otimes \theta^B \otimes \theta^C \otimes \widetilde{\eta } _{I},
    \end{aligned}
\end{equation}
where  $\widetilde{\nabla} \beta $ is the covariant derivative of $\beta $ with respect to $(\nabla,\widetilde{\nabla } )$, and recall that $\beta $ denotes the second fundamental form of $f$.
Thus we have
\begin{equation}\label{eq:1oder}
     f^* \widetilde{\omega }^{\alpha}=f_j^{\alpha} \theta^j+f_{\bar{j}}^{\alpha} \theta^{\bar{j}}+f_0^{\alpha} \theta . 
\end{equation}

Differentiating $ \eqref{eq:1oder}$, we have 
$$ \begin{aligned}
    f^* d \widetilde{\omega }^{\alpha}=&f_j^{\alpha} d\theta^j+f_{\bar{j}}^{\alpha} d\theta^{\bar{j}}+f_0^{\alpha} d\theta \\
    &+df_j^{\alpha} \wedge \theta^j+d f_{\bar{j}}^{\alpha} \wedge  \theta^{\bar{j}}+df_0^{\alpha}\wedge  \theta .\\
\end{aligned}  $$ 
By structure equations on $M $ and $N $, we have
$$ \begin{aligned}
    -f^* \widetilde{\omega}_{\beta}^{\alpha} \wedge f^* \widetilde{\omega}^{\beta}=& -f^* \widetilde{\omega}_{\beta}^{\alpha}  \wedge (f_j^{\beta} \theta^j+f_{\bar{j}}^{\beta} \theta^{\bar{j}}+f_0^{\beta} \theta )\\
    =&f_j^{\alpha} (\theta^k\wedge\theta_k^j+\theta\wedge\tau^j)+f_{\bar{j}}^{\alpha} (\theta^{\bar k}\wedge\theta_{\bar k}^{\bar j}+\theta\wedge\tau^{\bar j})+f_0^{\alpha} (2\sqrt{-1}h_{j\bar{k}}\theta^{j}\wedge\theta^{\bar k}) \\
    &+df_j^{\alpha} \wedge \theta^j+d f_{\bar{j}}^{\alpha} \wedge  \theta^{\bar{j}}+df_0^{\alpha}\wedge  \theta .\\
\end{aligned}  $$ 
After rearranging the above formula, we get 
\begin{equation}\label{eq:Df}
    D f_B^\alpha \wedge \theta^B+2 \sqrt{-1} f_0^\alpha h_{k \bar{l}} \theta^k \wedge \theta^{\bar{l}}-f_k^\alpha A_{\bar{l}}^k \theta^{\bar{l}} \wedge \theta-f_{\bar{k}}^\alpha A_l^{\bar{k}} \theta^l \wedge \theta=0,
\end{equation}
where
 \begin{align}
    D f_k^\alpha & \equiv d f_k^\alpha-f_l^\alpha \theta_k^l+f_k^\beta \widetilde{\omega}_\beta^\alpha=f_{k B}^\alpha \theta^B,  \label{eq:df1} \\
    D f_{\bar{k}}^\alpha & \equiv d f_{\bar{k}}^\alpha-f_{\bar{l}}^\alpha \theta_{\bar{k}}^{\bar{l}}+f_{\bar{k}}^\beta \widetilde{\omega}_\beta^\alpha=f_{\bar{k} B}^\alpha \theta^B, \label{eq:df2}\\
    D f_0^\alpha & \equiv d f_0^\alpha+f_0^\beta \widetilde{\omega}_\beta^\alpha=f_{0 B}^\alpha \theta^B . \label{eq:df3} 
    \end{align}
    Here, for simplicity, we write  $f^* ( \widetilde{\omega}_\beta^\alpha )$ as  $ \widetilde{\omega}_\beta^\alpha$   on the right hand side  of the above formulas.     
Then $ \eqref{eq:Df} $ gives 
\begin{equation}\label{eq:cmu1}
    f_{jk}^{\alpha }=f_{kj}^{\alpha }, \quad f_{\bar j \bar k}^{\alpha }=f_{\bar k \bar j}^{\alpha }, \quad f_{j \bar k}^{\alpha }-f_{\bar k j}^{\alpha }=2 \sqrt{-1}f_{0}^{\alpha }h_{j\bar{k}}, \quad f_{0 j }^{\alpha }-f_{j 0}^{\alpha }=f^{\alpha }_{\bar{k}}A^{\bar{k}}_{j},\quad f_{0 \bar j }^{\alpha }-f_{\bar j 0}^{\alpha }=f^{\alpha }_{k}A^{k}_{\bar j} .
\end{equation}
Note that here and in the following, we have $h_{j\bar k}=\delta _{j  k}$, since we have adopted a unitary frame.

Differentiating $ \eqref{eq:df1} $, we have 
$$ -f_l^\alpha d \theta_k^l+f_k^\beta d \widetilde{\omega}_\beta^\alpha -d f_l^\alpha \wedge  \theta_k^l+df_k^\beta \wedge \widetilde{\omega}_\beta^\alpha =f_{k B}^\alpha d \theta^B +df_{k B}^\alpha \wedge  \theta^B. $$
Using structure equations again, we have 
$$ \begin{aligned}
   0=& f_{j}^{\alpha }(-\theta _{l}^{j}\wedge \theta _{k}^{l}+\Pi_{k}^{j})-f_{k}^{\beta }(-\widetilde{\omega}_\gamma^\alpha \wedge \widetilde{\omega}_\beta^\gamma+\widetilde{\Omega}_\beta^\alpha)\\
    &+f_{k j}^{\alpha }(\theta^l\wedge\theta_l^j+\theta\wedge\tau^j )+f_{k \bar j}^{\alpha }(\theta^{\bar l}\wedge\theta_{\bar l}^{\bar j}+\theta\wedge\tau^{\bar j} )+2\sqrt{-1}h_{j\bar{k}}f_{k 0}^{\alpha } \theta^{j}\wedge\theta^{\bar k}\\
    &+ d f_l^\alpha \wedge  \theta_k^l-df_k^\beta \wedge \widetilde{\omega}_\beta^\alpha +df_{k B}^\alpha \wedge  \theta^B.
\end{aligned}  $$
It follows that 
\begin{equation}\label{eq:DDf}
D f_{k B}^{\alpha}  \wedge \theta^B+2 \sqrt{-1} f_{k 0}^{\alpha} h_{j \bar{l}} \theta^j \wedge \theta^{\bar{l}}-f_{k l}^{\alpha} A_{\bar{j}}^l \theta^{\bar{j}} \wedge \theta-f_{k \bar{l}}^{\alpha} A_j^{\bar{l}} \theta^j \wedge \theta=-f_l^{\alpha} \Pi_k^l+f_k^{\beta } \widetilde{\Omega}_{\beta }^{\alpha},
\end{equation}
where
\begin{align}
    Df_{jk}^{\alpha } &\equiv df _{jk}^{\alpha }-f_{jl}^{\alpha}\theta ^{l}_{k}-f_{lk}^{\alpha}\theta ^{l}_{j}+f_{jk}^{\beta }\widetilde{\omega} _{\beta }^{\alpha }=f_{jkB}^{\alpha }\theta ^{B},\\
    Df_{j \bar k}^{\alpha } &\equiv df _{j\bar k}^{\alpha }-f_{j\bar l}^{\alpha}\theta ^{\bar l}_{\bar k}-f_{l \bar k}^{\alpha}\theta ^{l}_{j}+f_{j \bar k}^{\beta }\widetilde{\omega} _{\beta }^{\alpha }=f_{j \bar kB}^{\alpha }\theta ^{B},\\
    Df_{j0}^{\alpha } &\equiv df _{j0}^{\alpha }-f_{l0}^{\alpha}\theta ^{l}_{j}+f_{j0}^{\beta }\widetilde{\omega } _{\beta }^{\alpha }=f_{j0B}^{\alpha }\theta ^{B}. \label{eq:DDf3}
\end{align} 
From $ \eqref{eq:DDf} $, we have
\begin{equation}\label{cmu32}
    \begin{aligned}
        & f_{ijk}^{\alpha }=f_{ikj}^{\alpha }-f_{i}^{\beta }f_{j}^{\gamma }f_{k}^{\bar \sigma }\widetilde{R}_{\beta \gamma \bar \sigma }^{\alpha }+f_{i}^{\beta }f_{k}^{\gamma }f_{j}^{\bar \sigma }\widetilde{R}_{\beta \gamma \bar \sigma }^{\alpha } + 2 \sqrt{-1}f_{j}^{\alpha }A_{ik}-2 \sqrt{-1}f_{k}^{\alpha }A_{ij}, \\
        & f_{i \bar j \bar k}^{\alpha }=f_{i\bar k\bar j}^{\alpha }-f_{i}^{\beta }f_{\bar j}^{\gamma }f_{\bar k}^{\bar \sigma }\widetilde{R}_{\beta \gamma \bar \sigma }^{\alpha }+f_{i}^{\beta }f_{\bar k}^{\gamma }f_{\bar j}^{\bar \sigma }\widetilde{R}_{\beta \gamma \bar \sigma }^{\alpha } + 2 \sqrt{-1}f_{l}^{\alpha }h_{i \bar{j}}A_{\bar{k}}^{l}- 2 \sqrt{-1}f_{l}^{\alpha }h_{i \bar{k}}A_{\bar{j}}^{l},\\
        & f_{ij\bar k}^{\alpha }=f_{i\bar kj}^{\alpha }-f_{i}^{\beta }f_{j}^{\gamma }f_{\bar k}^{\bar \sigma }\widetilde{R}_{\beta \gamma \bar \sigma }^{\alpha }+f_{i}^{\beta }f_{\bar k}^{\gamma }f_{j}^{\bar \sigma }\widetilde{R}_{\beta \gamma \bar \sigma }^{\alpha }+f_{l}^{\alpha }R_{ij \bar{k}}^{l}+2 \sqrt{-1}f_{i0}^{\alpha }h_{j \bar{k}}, \\
        & f_{ij0}^{\alpha }=f_{i0j}^{\alpha }-f_{i}^{\beta }f_{j}^{\gamma }f_{0}^{\bar \sigma }\widetilde{R}_{\beta \gamma \bar \sigma }^{\alpha }+f_{i}^{\beta }f_{0}^{\gamma }f_{j}^{\bar \sigma }\widetilde{R}_{\beta \gamma \bar \sigma }^{\alpha } +f_{l}^{\alpha }h^{l \bar{k}}A_{ij , \bar{k}}-f_{i \bar{k}}^{\alpha }A_{j}^{\bar{k}},\\
        & f_{i\bar j0}^{\alpha }=f_{i0\bar j}^{\alpha }-f_{i}^{\beta }f_{\bar j}^{\gamma }f_{0}^{\bar \sigma }\widetilde{R}_{\beta \gamma \bar \sigma }^{\alpha }+f_{i}^{\beta }f_{0}^{\gamma }f_{\bar j}^{\bar \sigma }\widetilde{R}_{\beta \gamma \bar \sigma }^{\alpha } -f_{l}^{\alpha }h^{l \bar{k}}A_{\bar j  \bar{k},i}-f_{i k}^{\alpha }A_{\bar j}^{k}.
    \end{aligned} 
\end{equation}

Similarly, differentiating $ \eqref{eq:df2} $, we have 
\begin{equation}\label{eq:DDfb}
    D f_{\bar k B}^{\alpha}  \wedge \theta^B+2 \sqrt{-1} f_{\bar k 0}^{\alpha} h_{j \bar{l}} \theta^j \wedge \theta^{\bar{l}}-f_{\bar k l}^{\alpha} A_{\bar{j}}^l \theta^{\bar{j}} \wedge \theta-f_{\bar k \bar{l}}^{\alpha} A_j^{\bar{l}} \theta^j \wedge \theta=-f_{\bar l }^{\alpha} \Pi_{\bar k }^{\bar l}+f_{\bar k }^{\beta } \widetilde{\Omega}_{\beta }^{\alpha},
    \end{equation}
where 
\begin{align}
    Df_{\bar j k}^{\alpha } &\equiv df _{\bar jk}^{\alpha }-f_{\bar jl}^{\alpha}\theta ^{l}_{k}-f_{\bar l k}^{\alpha}\theta ^{\bar l}_{\bar j}+f_{\bar jk}^{\beta }\widetilde{\omega} _{\beta }^{\alpha }=f_{\bar jkB}^{\alpha }\theta ^{B},\\
    Df_{\bar j \bar k}^{\alpha } &\equiv df _{\bar j\bar k}^{\alpha }-f_{\bar j\bar l}^{\alpha}\theta ^{\bar l}_{\bar k}-f_{\bar l \bar k}^{\alpha}\theta ^{\bar l}_{\bar j}+f_{\bar j \bar k}^{\beta }\widetilde{\omega} _{\beta }^{\alpha }=f_{\bar j \bar kB}^{\alpha }\theta ^{B},\\
    Df_{\bar j0}^{\alpha } &\equiv df _{\bar j0}^{\alpha }-f_{\bar l0}^{\alpha}\theta ^{\bar l}_{\bar j}+f_{\bar j0}^{\beta }\widetilde{\omega }  _{\beta }^{\alpha }=f_{\bar j0B}^{\alpha }\theta ^{B}. 
\end{align}
From $ \eqref{eq:DDfb} $, we have 
\begin{equation}\label{eq:c37}
    \begin{aligned}
        & f_{\bar ijk}^{\alpha }=f_{\bar ikj}^{\alpha }-f_{\bar i}^{\beta }f_{j}^{\gamma }f_{k}^{\bar \sigma }\widetilde{R}_{\beta \gamma \bar \sigma }^{\alpha }+f_{\bar i}^{\beta }f_{k}^{\gamma }f_{j}^{\bar \sigma }\widetilde{R}_{\beta \gamma \bar \sigma }^{\alpha } + 2 \sqrt{-1}f_{\bar l}^{\alpha }h_{\bar i k}A_{j}^{\bar l}- 2 \sqrt{-1}f_{\bar l}^{\alpha }h_{\bar i j}A_{k}^{\bar l}, \\
        & f_{\bar i \bar j \bar k}^{\alpha }=f_{\bar i\bar k\bar j}^{\alpha }-f_{\bar i}^{\beta }f_{\bar j}^{\gamma }f_{\bar k}^{\bar \sigma }\widetilde{R}_{\beta \gamma \bar \sigma }^{\alpha }+f_{\bar i}^{\beta }f_{\bar k}^{\gamma }f_{\bar j}^{\bar \sigma }\widetilde{R}_{\beta \gamma \bar \sigma }^{\alpha } + 2 \sqrt{-1}f_{\bar k}^{\alpha }A_{\bar i \bar j}- 2 \sqrt{-1}f_{\bar j}^{\alpha }A_{\bar i \bar k},\\
        & f_{\bar ij\bar k}^{\alpha }=f_{\bar i\bar kj}^{\alpha }-f_{\bar i}^{\beta }f_{j}^{\gamma }f_{\bar k}^{\bar \sigma }\widetilde{R}_{\beta \gamma \bar \sigma }^{\alpha }+f_{\bar i}^{\beta }f_{\bar k}^{\gamma }f_{j}^{\bar \sigma }\widetilde{R}_{\beta \gamma \bar \sigma }^{\alpha }+f_{\bar l}^{\alpha }R_{\bar ij \bar{k}}^{\bar l}+2 \sqrt{-1}f_{\bar i0}^{\alpha }h_{j \bar{k}} ,\\
        & f_{\bar ij0}^{\alpha }=f_{\bar i0j}^{\alpha }-f_{\bar i}^{\beta }f_{j}^{\gamma }f_{0}^{\bar \sigma }\widetilde{R}_{\beta \gamma \bar \sigma }^{\alpha }+f_{\bar i}^{\beta }f_{0}^{\gamma }f_{j}^{\bar \sigma }\widetilde{R}_{\beta \gamma \bar \sigma }^{\alpha } -f_{\bar l}^{\alpha }h^{\bar l k}A_{jk,\bar i}-f_{\bar i \bar{k}}^{\alpha }A_{j}^{\bar{k}},\\
        & f_{\bar i\bar j0}^{\alpha }=f_{\bar i0\bar j}^{\alpha }-f_{\bar i}^{\beta }f_{\bar j}^{\gamma }f_{0}^{\bar \sigma }\widetilde{R}_{\beta \gamma \bar \sigma }^{\alpha }+f_{\bar i}^{\beta }f_{0}^{\gamma }f_{\bar j}^{\bar \sigma }\widetilde{R}_{\beta \gamma \bar \sigma }^{\alpha } +f_{\bar l}^{\alpha }h^{\bar l k}A_{\bar i \bar j ,k}-f_{\bar i k}^{\alpha }A_{\bar j}^{k}.
    \end{aligned} 
\end{equation}

Using the same argument again, differentiating  $ \eqref{eq:df3} $ yields
\begin{equation}\label{eq:DDf2}
    D f_{0B}^{\alpha } \wedge \theta ^{B}+2 \sqrt{-1} f_{00}^{\alpha }h_{j \bar k} \theta ^{j} \wedge \theta ^{\bar k}-f_{0 j}^{\alpha } A_{\bar k}^{j} \theta ^{\bar k} \wedge \theta -f_{0 \bar j}^{\alpha } A_{ k}^{\bar j} \theta ^{ k} \wedge \theta=f_{0}^{\beta } \widetilde{\Omega }_{\beta }^{\alpha } ,
\end{equation}
where 
\begin{align}
    Df_{0 k}^{\alpha } \equiv& d f_{0 k}^{\alpha }-f_{0 j}^{\alpha } \theta ^{j}_{k}+f_{0 k}^{\beta } \widetilde{\omega }_{\beta }^{\alpha }=f_{0 k B}^{\alpha }\theta ^B,\\
    Df_{0 \bar k}^{\alpha } \equiv& d f_{0 \bar k}^{\alpha }-f_{0 \bar j}^{\alpha } \theta ^{\bar j}_{\bar k}+f_{0 \bar k}^{\beta } \widetilde{\omega }_{\beta }^{\alpha }=f_{0 \bar k B}^{\alpha }\theta ^B,\\
    Df_{0 0}^{\alpha } \equiv& d f_{0 0}^{\alpha }+f_{0 0}^{\beta } \widetilde{\omega }_{\beta }^{\alpha }=f_{0 0 B}^{\alpha }\theta ^B.
\end{align} 
From $ \eqref{eq:DDf2} $, we have 
\begin{equation}\label{cmu43}
    \begin{aligned}
        f_{0 j k}^{\alpha }=& f_{0 k j}^{\alpha }-f_{0}^{\beta }f_{j}^{\gamma }f_{k}^{\bar \sigma } \widetilde{R}_{\beta \gamma \bar \sigma }^{\alpha }+f_{0}^{\beta }f_{k}^{\gamma }f_{j}^{\bar \sigma } \widetilde{R}_{\beta \gamma \bar \sigma }^{\alpha }  ,\\
        f_{0 j \bar k}^{\alpha }=& f_{0 \bar k j}^{\alpha }-f_{0}^{\beta }f_{j}^{\gamma }f_{\bar k}^{\bar \sigma } \widetilde{R}_{\beta \gamma \bar \sigma }^{\alpha }+f_{0}^{\beta }f_{\bar k}^{\gamma }f_{j}^{\bar \sigma } \widetilde{R}_{\beta \gamma \bar \sigma }^{\alpha } +2 \sqrt{-1}f_{00}^{\alpha }h_{j \bar k}, \\
        f_{0 0 k}^{\alpha }=& f_{0 k 0}^{\alpha }-f_{0}^{\beta }f_{0}^{\gamma }f_{k}^{\bar \sigma } \widetilde{R}_{\beta \gamma \bar \sigma }^{\alpha }+f_{0}^{\beta }f_{k}^{\gamma }f_{0}^{\bar \sigma } \widetilde{R}_{\beta \gamma \bar \sigma }^{\alpha }+f_{0 \bar j}^{\alpha }A_{k}^{\bar j}  ,\\
        f_{0 0 \bar k}^{\alpha }=& f_{0 \bar k 0}^{\alpha }-f_{0}^{\beta }f_{0}^{\gamma }f_{\bar k}^{\bar \sigma } \widetilde{R}_{\beta \gamma \bar \sigma }^{\alpha }+f_{0}^{\beta }f_{\bar k}^{\gamma }f_{0}^{\bar \sigma } \widetilde{R}_{\beta \gamma \bar \sigma }^{\alpha }+f_{0 j}^{\alpha }A_{\bar k}^{ j} . 
    \end{aligned} 
\end{equation}

Last, from $ \eqref{eq:cmu1} $, we have
\begin{equation}\label{eq:c43}
    \begin{aligned}
        f_{i \bar j k}^{\alpha }=&f_{\bar j i k}^{\alpha }+2 \sqrt{-1}h_{i \bar j}f_{0k}^{\alpha }, \\
        f_{i \bar j \bar k}^{\alpha }=&f_{\bar j i \bar k}^{\alpha }+2 \sqrt{-1}h_{i \bar j}f_{0 \bar k}^{\alpha } ,\\
        f_{0  j k}^{\alpha }=&f_{j 0 k}^{\alpha }+f_{\bar l k}^{\alpha }A_{j}^{\bar l}+f_{\bar l}^{\alpha }A_{j,k}^{\bar l},\\
        f_{0  j \bar k}^{\alpha }=&f_{j 0 \bar k}^{\alpha }+f_{\bar l \bar k}^{\alpha }A_{j}^{\bar l}+f_{\bar l}^{\alpha }A_{j,\bar k}^{\bar l},\\
        f_{0 \bar j k}^{\alpha }=&f_{\bar j 0 k}^{\alpha }+f^{\alpha }_{l k}A^{l}_{\bar j}+f^{\alpha }_{ l}A^{l}_{\bar j,k},\\
        f_{0 \bar j \bar k}^{\alpha }=&f_{\bar j 0 \bar k}^{\alpha }+f^{\alpha }_{l \bar k}A^{l}_{\bar j}+f^{\alpha }_{ l}A^{l}_{\bar j,\bar k}.
    \end{aligned} 
\end{equation}

\section{Foliated and $(J,\widetilde{J})$-holomorphicity results}
A divergence of a vector field $X $ on $(M,H , \theta ) $ is defined by 
$$ L_{X}\Psi =\di(X)\Psi  ,$$
where $\Psi =\theta \wedge (d \theta )^m $ is the volume form.
One has (cf. Lemma \ref{lemDiv})
\begin{equation}
    \di(X)=\trace_{g_\theta } (Y \in TM \to \nabla _YX).
\end{equation}
Also note that $\di $ is a real operator:
\begin{equation}
    \overline{\di(X)}=\di(\bar X).
\end{equation}

    If $u $ is a function on $(M,H ,\theta ) $, then its sub-Laplacian $\lap_b $ is defined by, under an  adapted frame,
    $$ \lap_b u:=\di(\nabla ^H u)= u_{i \bar{i }}+u_{ \bar{i }i},$$
    where $\nabla ^H u $ is the horizontal component of the gradient  of $u $.
    Note that the usual Laplacian of $u $ is 
$$ \lap u= u_{i \bar{i }}+u_{ \bar{i }i}+u_{00}.$$

Using an adapted frame, we can express $\tau _{\dbb,\xi }(f)$ as follows: 
$$ \tau _{\dbb,\xi}(f)=(f_{j \bar j }^{\alpha  }+f_{\bar j j }^{\alpha  }+f_{00 }^{\alpha  }-2m \sqrt{-1} f_{0 }^{\alpha  })\tilde{\eta} _{\alpha }+(f_{j \bar j }^{\bar \alpha  }+f_{\bar j j }^{\bar \alpha  }+f_{00 }^{\bar \alpha  }+2m \sqrt{-1} f_{0 }^{\bar \alpha  })\tilde{\eta} _{\bar \alpha }.$$
Besides, it follows from  the third equation of $\eqref{eq:cmu1}$ that
\begin{equation*}
    f_{j \bar j }^{\alpha  }+f_{\bar j j }^{\alpha  }+f_{00 }^{\alpha  }-2m \sqrt{-1} f_{0 }^{\alpha  }=2f_{\bar jj}^{\alpha}+f_{00}^{\alpha}.
\end{equation*}
Therefore, defining $(Lf)^{\alpha }:=2f_{\bar jj}^{\alpha}+f_{00}^{\alpha} $, we may express $\tau _{\dbb,\xi }(f)$ as
\begin{equation}\label{Lf}
    \tau _{\dbb,\xi }(f)=(Lf)^{\alpha } \widetilde{\eta }_{\alpha } +\overline{ (Lf)^{\alpha }} \widetilde{\eta }_{\bar \alpha }.
\end{equation}

By applying the commutation relations in \S \ref{se:commut}, we have 
\begin{lemma}\label{lem:bochner0}
    \begin{equation}\label{la1}
        \begin{aligned}
            \frac{1}{2} \lap |df(\xi )|^2
            =& 2(|f_{0j }^{\alpha  }|^2+|f_{0 \bar{j} }^{\alpha  }|^2+|f_{00 }^{\alpha  }|^2)+ f_{0 }^{\bar{\alpha } }(Lf)^{\alpha }_{0} + f_{0 }^{\alpha  }\overline{(Lf)^{\alpha }_{0}} +2 \sqrt{-1} m  (f_{0 }^{\bar \alpha  }f_{00 }^{\alpha  }-f_{0 }^{ \alpha  }f_{00 }^{\bar\alpha  })\\
            &+2 f_{0 }^{\bar{\alpha } } f_{\bar j}^{\beta }f_{j}^{\gamma }f_{0}^{\bar \sigma }\widetilde{R}_{\bar{\alpha }\beta \gamma \bar \sigma }+2 f_{\bar{j} }^{\bar{\alpha } } f_{0}^{\beta }f_{0}^{\gamma }f_{j}^{\bar \sigma }\widetilde{R}_{\bar{\alpha }\beta \gamma \bar \sigma } -2f_{0 }^{\bar{\alpha } } f_{\bar j}^{\beta }f_{0}^{\gamma }f_{j}^{\bar \sigma }\widetilde{R}_{\bar{\alpha }\beta \gamma \bar \sigma }-2f_{0 }^{\bar{\alpha } }f_{j}^{\beta }f_{0}^{\gamma }f_{\bar j}^{\bar \sigma }\widetilde{R}_{\bar{\alpha }\beta \gamma \bar \sigma }\\
            &+2 (f_{0 }^{\bar{\alpha } } f_{\bar{l} }^{\alpha  }+ f_{0 }^{\alpha  } f_{\bar{l} }^{\bar \alpha  })A_{j,\bar{j} }^{\bar{l} }+2 (f_{0 }^{\bar{\alpha } }f^{\alpha }_{ l}+f_{0 }^{\alpha  }f^{\bar \alpha }_{ l})A^{l}_{\bar j,j}\\
            &+2( f_{0 }^{\bar{\alpha } }f_{\bar j \bar{k}}^{\alpha }+ f_{0 }^{\alpha  }f_{\bar j \bar{k}}^{\bar \alpha })A_{j}^{\bar{k}}+ 2 (f_{0 }^{\bar{\alpha } } f^{\alpha }_{l j}+f_{0 }^{\alpha  } f^{\bar \alpha }_{l j})A^{l}_{\bar j}.
        \end{aligned} 
    \end{equation}
\end{lemma}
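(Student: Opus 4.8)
The plan is to compute $\tfrac12\lap|df(\xi)|^2$ directly in an adapted \emph{unitary} frame, in which $h_{j\bar k}=\delta_{jk}$, the real vector field $df(\xi)$ has components $f_0^{\bar\alpha}=\overline{f_0^\alpha}$, and therefore $|df(\xi)|^2=2 f_0^\alpha f_0^{\bar\alpha}$ (summation understood); thus $\tfrac12\lap|df(\xi)|^2=\lap\!\big(f_0^\alpha f_0^{\bar\alpha}\big)$ with $\lap u=u_{i\bar i}+u_{\bar ii}+u_{00}$. In outline: expand by the Leibniz rule; invoke the commutation relations of Section~\ref{se:commut} to rewrite the third covariant derivatives $f_{0i\bar i}^\alpha$, $f_{0\bar ii}^\alpha$ with the leading index $0$ moved to the last slot; recognize the combination $(Lf)^\alpha_0$; and collect the remaining curvature and torsion terms.

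Since $\nabla\xi=0$ for the Tanaka--Webster connection, the covariant derivatives of the components $f_0^\alpha$ of $df(\xi)$ are exactly the components $f_{0A}^\alpha$ of $\beta(\xi,\cdot)$ and $f_{0AB}^\alpha$ of $\widetilde{\nabla}\beta$, and the Leibniz rule gives
\begin{equation*}
\tfrac12\lap|df(\xi)|^2=\big(f_{0i\bar i}^\alpha+f_{0\bar ii}^\alpha+f_{000}^\alpha\big)f_0^{\bar\alpha}+f_0^\alpha\,\overline{\big(f_{0i\bar i}^\alpha+f_{0\bar ii}^\alpha+f_{000}^\alpha\big)}+2\big(|f_{0j}^\alpha|^2+|f_{0\bar j}^\alpha|^2+|f_{00}^\alpha|^2\big).
\end{equation*}
The last group is already the first line of \eqref{la1}. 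I would then move the index $0$ in two stages. First, the fourth and fifth identities of \eqref{eq:c43} (which swap the first two indices when the first is $0$) turn $f_{0i\bar i}^\alpha,f_{0\bar ii}^\alpha$ into $f_{i0\bar i}^\alpha,f_{\bar i0i}^\alpha$ plus pure-torsion terms. Second, the fifth identity of \eqref{cmu32} and the fourth identity of \eqref{eq:c37} (which swap the last two indices, one of them $0$) turn these into $f_{i\bar i0}^\alpha,f_{\bar ii0}^\alpha$ plus curvature terms of shape $f_\bullet^\beta f_\bullet^\gamma f_0^{\bar\sigma}\widetilde{R}^\alpha_{\beta\gamma\bar\sigma}$ (with $\{\bullet,\bullet\}=\{i,\bar i\}$) and more torsion. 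Finally, the third identity of \eqref{eq:cmu1}, differentiated in the $\xi$-direction and summed over $i$, gives $f_{i\bar i0}^\alpha+f_{\bar ii0}^\alpha=2f_{\bar ii0}^\alpha+2\sqrt{-1}\,m\,f_{00}^\alpha$. Since $(Lf)^\alpha=2f_{\bar jj}^\alpha+f_{00}^\alpha$ yields $(Lf)^\alpha_0=2f_{\bar jj0}^\alpha+f_{000}^\alpha$, these steps combine into
\begin{equation*}
f_{0i\bar i}^\alpha+f_{0\bar ii}^\alpha+f_{000}^\alpha=(Lf)^\alpha_0+2\sqrt{-1}\,m\,f_{00}^\alpha+(\text{curvature})+(\text{torsion}).
\end{equation*}
Contracting with $f_0^{\bar\alpha}$, adding the complex conjugate, and recombining with the Leibniz part recovers the $f_0^{\bar\alpha}(Lf)^\alpha_0+f_0^\alpha\overline{(Lf)^\alpha_0}$ term and the $2\sqrt{-1}m\,(f_0^{\bar\alpha}f_{00}^\alpha-f_0^\alpha f_{00}^{\bar\alpha})$ term of \eqref{la1}.

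It then remains to organize the accumulated curvature and torsion monomials into the last three lines of \eqref{la1}. On the curvature side, the two variants $f_i^\beta f_{\bar i}^\gamma f_0^{\bar\sigma}\widetilde{R}^\alpha_{\beta\gamma\bar\sigma}$ and $f_{\bar i}^\beta f_i^\gamma f_0^{\bar\sigma}\widetilde{R}^\alpha_{\beta\gamma\bar\sigma}$ coincide after the K\"ahler symmetry $\widetilde{R}_{\bar\alpha\beta\gamma\bar\sigma}=\widetilde{R}_{\bar\alpha\gamma\beta\bar\sigma}$, and together with the complex conjugates (using the pair symmetry of $\widetilde{R}$ and $\overline{\widetilde{R}_{\bar\alpha\beta\gamma\bar\sigma}}=\widetilde{R}_{\alpha\bar\beta\bar\gamma\sigma}$) they fuse into the four curvature terms of \eqref{la1} with coefficient $2$. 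On the torsion side, one separates the contributions carrying first derivatives $f_l^\alpha,f_{\bar l}^\alpha$ --- these combine, via $\overline{A^{\bar l}_{j,\bar j}}=A^l_{\bar j,j}$ and the numerical identification (in the unitary frame) of the $h$-contracted derivative-of-torsion terms $A_{\bar i\bar k,i}$, $A_{ik,\bar i}$ with $A^l_{\bar j,j}$, $A^{\bar l}_{j,\bar j}$, into the $(f_0^{\bar\alpha}f_{\bar l}^\alpha+f_0^\alpha f_{\bar l}^{\bar\alpha})A^{\bar l}_{j,\bar j}$ and $(f_0^{\bar\alpha}f_l^\alpha+f_0^\alpha f_l^{\bar\alpha})A^l_{\bar j,j}$ terms --- from those carrying second derivatives $f_{lj}^\alpha,f_{\bar j\bar k}^\alpha$, which combine, using $f_{lj}^\alpha=f_{jl}^\alpha$ and $f_{\bar j\bar k}^\alpha=f_{\bar k\bar j}^\alpha$ from \eqref{eq:cmu1}, into the $(f_0^{\bar\alpha}f_{\bar j\bar k}^\alpha+f_0^\alpha f_{\bar j\bar k}^{\bar\alpha})A_j^{\bar k}$ and $(f_0^{\bar\alpha}f_{lj}^\alpha+f_0^\alpha f_{lj}^{\bar\alpha})A_{\bar j}^l$ terms. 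I expect the main obstacle to be purely organizational: carrying the roughly dozen curvature and torsion monomials cleanly through two rounds of commutation and one complex conjugation, with signs and index conventions matching \eqref{la1} exactly; no idea beyond Section~\ref{se:commut} should be needed.
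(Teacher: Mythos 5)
Your proposal is correct and follows essentially the same route as the paper's proof: expand $\lap(f_0^\alpha f_0^{\bar\alpha})$ by the Leibniz rule, use the identities \eqref{eq:c43} together with the fifth line of \eqref{cmu32} and the fourth line of \eqref{eq:c37} to commute the index $0$ to the last slot (picking up the curvature and torsion monomials), and identify $f_{j\bar j0}^{\alpha}+f_{\bar jj0}^{\alpha}+f_{000}^{\alpha}$ with $(Lf)^{\alpha}_{0}+2\sqrt{-1}\,m\,f_{00}^{\alpha}$ via the differentiated third relation of \eqref{eq:cmu1}. The only cosmetic difference is that the paper writes $(Lf)^{\alpha}_{0}=f_{\bar jj0}^{\alpha}+f_{j\bar j0}^{\alpha}+f_{000}^{\alpha}-2m\sqrt{-1}f_{00}^{\alpha}$ directly rather than passing through $2f_{\bar jj0}^{\alpha}+f_{000}^{\alpha}$, which is the same identity.
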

\begin{proof}
    First,
\begin{equation}\label{f032}
    \begin{aligned}
     \frac{1}{2} \lap |df(\xi )|^2
        =& (f_0^{\alpha }f_0^{\bar{\alpha }})_{j \bar{j}}+(f_0^{\alpha }f_0^{\bar{\alpha }})_{\bar{j}j} +(f_0^{\alpha }f_0^{\bar{\alpha }})_{00}\\
        =& 2(f_{0j }^{\alpha  }f_{0 \bar{j} }^{\bar{\alpha } }+f_{0 \bar j }^{ \alpha  }f_{0 j }^{\bar{\alpha } }+f_{00}^{\alpha }f_{00}^{\bar \alpha })+f_{0 }^{\bar \alpha  }(f_{0  \bar{j} j }^{\alpha  }+ f_{0 j\bar{j} }^{\alpha  }+f_{000}^{\alpha })+f_{0 }^{\alpha  }(f_{0 j \bar{j} }^{\bar \alpha  }+ f_{0 \bar{j} j}^{\bar{\alpha } }+f_{000}^{\bar \alpha }).
    \end{aligned} 
\end{equation}
From $\eqref{eq:c43} $ and $\eqref{eq:c37} $, we have
\begin{equation}\label{f033}
    \begin{aligned}
        f_{0 \bar j j}^{\alpha }=&f_{\bar j 0 j}^{\alpha }+f^{\alpha }_{l j}A^{l}_{\bar j}+f^{\alpha }_{ l}A^{l}_{\bar j,j}\\
        =& f_{\bar j  j 0}^{\alpha } +f_{\bar j}^{\beta }f_{j}^{\gamma }f_{0}^{\bar \sigma }\widetilde{R}_{\beta \gamma \bar \sigma }^{\alpha }-f_{\bar j}^{\beta }f_{0}^{\gamma }f_{j}^{\bar \sigma }\widetilde{R}_{\beta \gamma \bar \sigma }^{\alpha } \\
        &+f_{\bar l}^{\alpha }h^{\bar l k}A_{jk,\bar j}+f_{\bar j \bar{k}}^{\alpha }A_{j}^{\bar{k}}+f^{\alpha }_{l j}A^{l}_{\bar j}+f^{\alpha }_{ l}A^{l}_{\bar j,j}.
    \end{aligned} 
\end{equation}
From $\eqref{eq:c43} $ and $\eqref{cmu32} $, we have
\begin{equation}\label{f034}
    \begin{aligned}
        f_{0  j \bar j}^{\alpha }=&f_{j 0 \bar j}^{\alpha }+f_{\bar l \bar j}^{\alpha }A_{j}^{\bar l}+f_{\bar l}^{\alpha }A_{j,\bar j}^{\bar l}\\
        =&  f_{j\bar j0}^{\alpha }+f_{j}^{\beta }f_{\bar j}^{\gamma }f_{0}^{\bar \sigma }\widetilde{R}_{\beta \gamma \bar \sigma }^{\alpha }-f_{j}^{\beta }f_{0}^{\gamma }f_{\bar j}^{\bar \sigma }\widetilde{R}_{\beta \gamma \bar \sigma }^{\alpha } \\
        &+f_{l}^{\alpha }h^{l \bar{k}}A_{\bar j  \bar{k},j}+f_{j k}^{\alpha }A_{\bar j}^{k} +f_{\bar l \bar j}^{\alpha }A_{j}^{\bar l}+f_{\bar l}^{\alpha }A_{j,\bar j}^{\bar l} .
    \end{aligned} 
\end{equation}
Note that
\begin{equation}\label{f035}
    \begin{aligned}
        &f_{0 }^{\bar{\alpha } } (f_{\bar j}^{\beta }f_{j}^{\gamma }f_{0}^{\bar \sigma }\widetilde{R}_{\beta \gamma \bar \sigma }^{\alpha }-f_{\bar j}^{\beta }f_{0}^{\gamma }f_{j}^{\bar \sigma }\widetilde{R}_{\beta \gamma \bar \sigma }^{\alpha }+f_{j}^{\beta }f_{\bar j}^{\gamma }f_{0}^{\bar \sigma }\widetilde{R}_{\beta \gamma \bar \sigma }^{\alpha }-f_{j}^{\beta }f_{0}^{\gamma }f_{\bar j}^{\bar \sigma }\widetilde{R}_{\beta \gamma \bar \sigma }^{\alpha }) \\
        =&2 f_{0 }^{\bar{\alpha } } f_{\bar j}^{\beta }f_{j}^{\gamma }f_{0}^{\bar \sigma }\widetilde{R}_{\bar{\alpha }\beta \gamma \bar \sigma } -f_{0 }^{\bar{\alpha } } f_{\bar j}^{\beta }f_{0}^{\gamma }f_{j}^{\bar \sigma }\widetilde{R}_{\bar{\alpha }\beta \gamma \bar \sigma }-f_{0 }^{\bar{\alpha } }f_{j}^{\beta }f_{0}^{\gamma }f_{\bar j}^{\bar \sigma }\widetilde{R}_{\bar{\alpha }\beta \gamma \bar \sigma },
    \end{aligned} 
\end{equation}
and, by $\eqref{eq:cmu1} $,
\begin{equation}\label{f036}
    \begin{aligned}
      &f_{0 }^{\bar{\alpha } }(f_{\bar l}^{\alpha }h^{\bar l k}A_{jk,\bar j}+f_{\bar j \bar{k}}^{\alpha }A_{j}^{\bar{k}}+f^{\alpha }_{l j}A^{l}_{\bar j}+f^{\alpha }_{ l}A^{l}_{\bar j,j})  \\
      &+f_{0 }^{\bar{\alpha } }(f_{l}^{\alpha }h^{l \bar{k}}A_{\bar j  \bar{k},j}+f_{j k}^{\alpha }A_{\bar j}^{k} +f_{\bar l \bar j}^{\alpha }A_{j}^{\bar l}+f_{\bar l}^{\alpha }A_{j,\bar j}^{\bar l}) \\
      =& 2 f_{0 }^{\bar{\alpha } } f_{\bar{l} }^{\alpha  }A_{j,\bar{j} }^{\bar{l} }+2 f_{0 }^{\bar{\alpha } }f^{\alpha }_{ l}A^{l}_{\bar j,j}+2 f_{0 }^{\bar{\alpha } }f_{\bar j \bar{k}}^{\alpha }A_{j}^{\bar{k}}+ 2 f_{0 }^{\bar{\alpha } } f^{\alpha }_{l j}A^{l}_{\bar j}.
    \end{aligned} 
\end{equation}
Therefore, substituting $\eqref{f033},\eqref{f034},\eqref{f035},\eqref{f036}$ into $\eqref{f032}$, we get
\begin{equation*}
    \begin{aligned}
        \frac{1}{2} \lap |df(\xi )|^2=& 2(|f_{0j }^{\alpha  }|^2+|f_{0 \bar{j} }^{\alpha  }|^2+|f_{00}^{\alpha }|^2)+ f_{0 }^{\bar{\alpha } }(f_{\bar j  j 0}^{\alpha }+f_{j \bar j   0}^{\alpha }+f_{000}^{\alpha }) + f_{0 }^{\alpha  }(f_{\bar j  j 0}^{\bar \alpha }+f_{j \bar j   0}^{\bar \alpha }+f_{000}^{\bar \alpha })\\
        &+2 f_{0 }^{\bar{\alpha } } f_{\bar j}^{\beta }f_{j}^{\gamma }f_{0}^{\bar \sigma }\widetilde{R}_{\bar{\alpha }\beta \gamma \bar \sigma }+2 f_{\bar{j} }^{\bar{\alpha } } f_{0}^{\beta }f_{0}^{\gamma }f_{j}^{\bar \sigma }\widetilde{R}_{\bar{\alpha }\beta \gamma \bar \sigma } -2f_{0 }^{\bar{\alpha } } f_{\bar j}^{\beta }f_{0}^{\gamma }f_{j}^{\bar \sigma }\widetilde{R}_{\bar{\alpha }\beta \gamma \bar \sigma }-2f_{0 }^{\bar{\alpha } }f_{j}^{\beta }f_{0}^{\gamma }f_{\bar j}^{\bar \sigma }\widetilde{R}_{\bar{\alpha }\beta \gamma \bar \sigma }\\
        &+2 (f_{0 }^{\bar{\alpha } } f_{\bar{l} }^{\alpha  }+ f_{0 }^{\alpha  } f_{\bar{l} }^{\bar \alpha  })A_{j,\bar{j} }^{\bar{l} }+2 (f_{0 }^{\bar{\alpha } }f^{\alpha }_{ l}+f_{0 }^{\alpha  }f^{\bar \alpha }_{ l})A^{l}_{\bar j,j}\\
        &+2( f_{0 }^{\bar{\alpha } }f_{\bar j \bar{k}}^{\alpha }+ f_{0 }^{\alpha  }f_{\bar j \bar{k}}^{\bar \alpha })A_{j}^{\bar{k}}+ 2 (f_{0 }^{\bar{\alpha } } f^{\alpha }_{l j}+f_{0 }^{\alpha  } f^{\bar \alpha }_{l j})A^{l}_{\bar j}.
    \end{aligned} 
\end{equation*}
Taking into account the identity
$$ (Lf)^{\alpha }_0=f_{\bar j  j 0}^{\alpha }+f_{j \bar j   0}^{\alpha } +f_{000}^{\alpha }-2m \sqrt{-1}f_{00}^{\alpha },$$
we obtain $\eqref{la1}$.

\end{proof}

\begin{remark}\label{re:2.2}
    One can check that 
    $$ \begin{aligned}
        \widetilde{g}\left(\widetilde{R}(d f(\eta _j),df(\xi ))\overline{d f(\eta _j)},df(\xi )\right)=& \widetilde{g}\left(\widetilde{R}\left(f_{j }^{\beta  }\tilde{\eta }_{\beta }+f_{j }^{\bar \alpha  }\tilde{\eta }_{\bar \alpha  },f_{0 }^{\gamma  }\tilde{\eta }_{\gamma }+f_{0 }^{\bar \sigma   }\tilde{\eta }_{\bar \sigma  }\right) \left(f_{\bar j }^{\bar \alpha   }\tilde{\eta }_{\bar \alpha  }+f_{\bar j }^{\beta   }\tilde{\eta }_{ \beta  }\right) ,f_{0 }^{\gamma  }\tilde{\eta }_{\gamma }+f_{0 }^{\bar \sigma   }\tilde{\eta }_{\bar \sigma  }\right) \\
        =&  f_{0 }^{\bar{\alpha } } f_{\bar j}^{\beta }f_{j}^{\gamma }f_{0}^{\bar \sigma }\widetilde{R}_{\bar{\alpha }\beta \gamma \bar \sigma }+ f_{\bar{j} }^{\bar{\alpha } } f_{0}^{\beta }f_{0}^{\gamma }f_{j}^{\bar \sigma }\widetilde{R}_{\bar{\alpha }\beta \gamma \bar \sigma } \\
        &-f_{0 }^{\bar{\alpha } } f_{\bar j}^{\beta }f_{0}^{\gamma }f_{j}^{\bar \sigma }\widetilde{R}_{\bar{\alpha }\beta \gamma \bar \sigma }-f_{0 }^{\bar{\alpha } }f_{j}^{\beta }f_{0}^{\gamma }f_{\bar j}^{\bar \sigma }\widetilde{R}_{\bar{\alpha }\beta \gamma \bar \sigma }.
    \end{aligned}  $$
If $N $ has non-positive sectional curvature, then
$$ \widetilde{g}(\widetilde{R}(Z,X)\overline{Z},X) \geq 0 $$
for any complex vector $Z $ and  any real vector $X $ on $N $. Thus, if this is the case, the second line in the right hand side of $\eqref{la1} $ is non-negative.

\end{remark}

\begin{lemma}\label{le2.5}
    Let $(M^{2m+1},H,J, \theta ) $  be a compact pseudo-Hermitian manifold.
    Let $f :M^{2m+1} \to (N^{2n},\widetilde{J},\widetilde{g})$ be a smooth map. If the second fundamental form satisfies
    $$ \beta (\xi ,X)=0, \quad \text{for any } X \in H  ,$$
    then $f $ is foliated.
\end{lemma}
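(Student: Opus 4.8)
The plan is to recast the hypothesis as an equation for the covariant derivative of the section $df(\xi)\in\Gamma(f^{-1}TN)$, and then deduce $df(\xi)\equiv0$ by integrating a suitable $1$-form over the closed manifold $M$. First I would observe that, because the Tanaka--Webster connection satisfies $\nabla\xi=0$, one has $\beta(\xi,X)=\widetilde\nabla_X df(\xi)-df(\nabla_X\xi)=\widetilde\nabla_X df(\xi)$ for every vector field $X$; hence the hypothesis $\beta(\xi,X)=0$ for $X\in H$ is exactly $\widetilde\nabla_X df(\xi)=0$ for all $X\in H$, and writing $X=\pi_H X+\theta(X)\xi$ yields $\widetilde\nabla_X df(\xi)=\theta(X)\,\beta(\xi,\xi)$ for all $X\in\Gamma(TM)$.

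Next I would introduce the smooth $1$-form $\rho$ on $M$ given by $\rho(X)=\widetilde g\big(df(X),df(\xi)\big)$ and compute its exterior derivative. Using the torsion identity for $df$ stated earlier (so that, $\widetilde\nabla$ being torsion free, $\widetilde\nabla_X df(Y)-\widetilde\nabla_Y df(X)=df([X,Y])$) together with the identity $\widetilde\nabla_X df(\xi)=\theta(X)\,\beta(\xi,\xi)$, a short calculation should give $d\rho=\theta\wedge\sigma$ with $\sigma(X):=\widetilde g\big(df(X),\beta(\xi,\xi)\big)$; in particular $d\rho\wedge\theta=\theta\wedge\sigma\wedge\theta=0$. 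On the other hand $\rho(\xi)=|df(\xi)|^2$, so the $1$-form $\rho-|df(\xi)|^2\theta$ annihilates $\xi$; since moreover $i_\xi d\theta=0$, the top-degree form $\big(\rho-|df(\xi)|^2\theta\big)\wedge(d\theta)^m$ is annihilated by $\xi$ and hence vanishes, giving the pointwise identity $\rho\wedge(d\theta)^m=|df(\xi)|^2\,\theta\wedge(d\theta)^m$.

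Then I would integrate: applying Stokes' theorem to the $2m$-form $\rho\wedge\theta\wedge(d\theta)^{m-1}$ on the closed manifold $M$, and using $d\big(\theta\wedge(d\theta)^{m-1}\big)=(d\theta)^m$, one gets $\int_M\rho\wedge(d\theta)^m=\int_M d\rho\wedge\theta\wedge(d\theta)^{m-1}$, whose right-hand side vanishes because $d\rho\wedge\theta=0$. Combined with the identity above this gives $\int_M|df(\xi)|^2\,\theta\wedge(d\theta)^m=0$; since $|df(\xi)|^2\ge0$ and $\theta\wedge(d\theta)^m$ is a positive multiple of the volume form $dv_\theta$, it follows that $df(\xi)\equiv0$, i.e. $f$ is foliated.

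I expect the only point requiring care to be the bookkeeping in the computation of $d\rho$, where the cancellation $\widetilde\nabla_X df(Y)-\widetilde\nabla_Y df(X)-df([X,Y])=0$ must be invoked at the right moment; the two auxiliary facts $d\rho\wedge\theta=0$ and $\big(\rho-|df(\xi)|^2\theta\big)\wedge(d\theta)^m=0$ are elementary, and I foresee no serious obstacle. It is worth noting that compactness is essential here — the conclusion already fails for $f(t,z)=t$ on the Heisenberg group — whereas, perhaps surprisingly, neither the K\"ahler hypothesis on $N$ nor any curvature assumption enters.
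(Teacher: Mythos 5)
Your proof is correct, and it reaches the conclusion by a genuinely different mechanism than the paper. The paper's argument (imported from \cite{CDRY2019onharm}) is a two-line frame computation: the hypothesis says $f^{\alpha}_{0j}=f^{\alpha}_{0\bar j}=0$, one integrates $\sqrt{-1}\,(f_j^{\alpha}f_{0\bar j}^{\bar\alpha}-f_{\bar j}^{\alpha}f_{0j}^{\bar\alpha})$ by parts over $M$, and the commutation relation $f^{\alpha}_{j\bar k}-f^{\alpha}_{\bar k j}=2\sqrt{-1}\,f_0^{\alpha}h_{j\bar k}$ from Section 4 turns the result into $2m\int_M|f_0^{\alpha}|^2=0$. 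Your version packages the same integration by parts invariantly: the $1$-form $\rho(X)=\widetilde g(df(X),df(\xi))$ plays the role of the paper's pairing, and the pointwise identity $\rho\wedge(d\theta)^m=|df(\xi)|^2\,\theta\wedge(d\theta)^m$ (coming from $i_\xi d\theta=0$ and $\rho(\xi)=|df(\xi)|^2$) encodes exactly the commutator term $2\sqrt{-1}\,m\,f_0^{\alpha}$, while Stokes applied to $\rho\wedge\theta\wedge(d\theta)^{m-1}$ together with $d\rho=\theta\wedge\sigma$ replaces the divergence computation. I checked the key steps: $d\rho(X,Y)$ does reduce to $\theta(X)\sigma(Y)-\theta(Y)\sigma(X)$ because the first-order terms cancel via the torsion identity of Lemma 2.6 with $T_{\widetilde\nabla}=0$, and a top-degree form killed by the nonvanishing $\xi$ indeed vanishes. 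What your route buys is independence from the entire moving-frame apparatus of Section 4 and a transparent accounting of the hypotheses (only compactness, $\nabla\xi=0$, torsion-freeness of $\widetilde\nabla$, and the contact structure enter; neither the K\"ahler condition nor curvature is used, as you note); what the paper's route buys is brevity, given that the commutation relations are already in place for the later Bochner arguments. Your Heisenberg-group remark about the necessity of compactness is also correct.
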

\begin{proof}
    Since $N $ is a Riemannian manifold, the claim follows directly from  \cite{CDRY2019onharm}. We present the proof for readers' convenience. 

    By the integration by parts and the third  formula in $\eqref{eq:cmu1} $, we have 
    $$ \begin{aligned}
        0=\sqrt{-1} \int_{M}( f_{j }^{\alpha  }f_{0\bar j}^{\bar \alpha  }-f_{\bar j }^{ \alpha  }f_{0 j}^{ \bar \alpha  } )=&-\sqrt{-1}\int_{M} (f_{j \bar j}^{\alpha  }f_{0}^{\bar \alpha  }-f_{\bar j j}^{ \alpha  }f_{0 }^{ \bar \alpha  }) \\
        =& 2m\int_{M}|f_{0 }^{\alpha  }|^2.
    \end{aligned}  $$
    Therefore, $f_{0 }^{\alpha  } =0$.
\end{proof}

The main difficulty in applying Lemma \ref{lem:bochner0} arises from the mixed term
$$
2 \sqrt{-1} m \left( f_{0}^{\bar{\alpha}} f_{00}^{\alpha} - f_{0}^{\alpha} f_{00}^{\bar{\alpha}} \right)
$$
and the terms related to torsion.
To address the mixed term, we need to add an extra term \( |f_{00}^{\alpha}|^2 \) (see below for details). 
Inspired by \cite{CDRY2019onharm}, we define the following generalized Paneitz operator acting on maps:
    $$ Pf:= \underbrace{(f_{\bar j j k}^{\alpha }+\frac{1}{2}f_{00k}^{\alpha }+
    2m \sqrt{-1}A_{k j}f_{\bar j}^{\alpha }) }_{:=(Pf)_{k}^{\alpha }}\theta ^k \otimes \widetilde{\eta }_{\alpha } .$$

In \cite{LS2019CRanalogue} (see also  \cite{grahamSmoothSolutionsDegenerate1988}),  Li and Son
defined  the following tensors $$Bf=B_{i \bar j}f^{\alpha } \theta ^{i} \otimes  \theta ^{\bar j} \otimes \widetilde{\eta }_{\alpha }$$ 
and
$$E=E_{\bar j} \theta ^{\bar j},$$
where  
$$ B_{i \bar j}f^{\alpha }:=f_{i \bar j}^{\alpha }-\frac{1}{m}f_{k \bar k}^{\alpha }h_{i \bar j} $$
and 
$$ E_{\bar j}:= (B_{i \bar j}f^{\alpha }) f_{\bar i}^{\bar \alpha }.$$
Then  $-\delta E$ is given by
$$ \begin{aligned}
    E_{\bar j ,j}=& (f_{i \bar j j}^{\alpha }-\frac{1}{m}f_{k \bar k j}^{\alpha }h_{i \bar j}) f_{\bar i}^{\bar \alpha }+(B_{i \bar j}f^{\alpha }) f_{\bar i j}^{\bar \alpha }\\
    =& |B_{i \bar j}f^{\alpha }|^2+\frac{m-1}{m}\la Pf,\dbb \bar f \ra - \widetilde{R}_{\bar \alpha \beta \gamma \bar \sigma } f_{\bar i}^{\bar \sigma }f_{\bar j}^{\beta } (f_{i}^{\gamma }f_{ j}^{\bar \alpha }-f_{ j}^{\gamma }f_{i}^{\bar \alpha })
    -\frac{m-1}{2m}f_{00k}^{\alpha }f_{\bar k}^{\bar \alpha }.
\end{aligned}  $$
Taking integration of $\delta E $ over $M $ gives 
$$ \begin{aligned}
    -\frac{m-1}{m} \int_{M} \la Pf,\dbb \bar f \ra dV_g =& \int_{M } |B_{i \bar j}f^{\alpha }|^2 dV_g  - \int_{M}  \widetilde{R}_{\bar \alpha \beta \gamma \bar \sigma } f_{\bar i}^{\bar \sigma }f_{\bar j}^{\beta } (f_{i}^{\gamma }f_{ j}^{\bar \alpha }-f_{ j}^{\gamma }f_{i}^{\bar \alpha }) \\
    &- \frac{m-1}{2m}\int_{M} f_{00k}^{\alpha }f_{\bar k}^{\bar \alpha } dV_g.
\end{aligned}  $$
Note that 
$$ f_{\bar k k}^{\bar \alpha }-f_{k \bar k}^{\bar \alpha }=-2 \sqrt{-1}m f_{0}^{\bar \alpha } ,$$
thus,
$$ \begin{aligned}
    \int_{M} f_{00k}^{\alpha }f_{\bar k}^{\bar \alpha } dV_g=& - \int_{M} f_{00}^{\alpha }f_{\bar k k}^{\bar \alpha } dV_g\\
   =& -\int_{M} f_{00}^{\alpha }  (f_{k \bar k}^{\bar \alpha }-2m \sqrt{-1}f_{0}^{\bar \alpha }) \\
   =& -\frac{1}{2}\int_{M} f_{00}^{\alpha }(\overline{(Lf)^{\alpha }}-f_{00}^{\bar \alpha })
   +2m \sqrt{-1}\int_{M} f_{00}^{\alpha }f_{0}^{\bar \alpha }\\
   =& \frac{1}{2}\int_{M}|f_{00}^{\alpha }|^2 -\frac{1}{2}\int_{M} f_{00}^{\alpha }\overline{(Lf)^{\alpha }}
   +2m \sqrt{-1}\int_{M} f_{00}^{\alpha }f_{0}^{\bar \alpha }.
\end{aligned}  $$
Therefore,
\begin{equation}\label{pani}
    \begin{aligned}
        -\frac{m-1}{m} \int_{M} \la Pf,\dbb \bar f \ra dV_g =& \int_{M } |B_{i \bar j}f^{\alpha }|^2 dV_g  - \int_{M}  \widetilde{R}_{\bar \alpha \beta \gamma \bar \sigma } f_{\bar i}^{\bar \sigma }f_{\bar j}^{\beta } (f_{i}^{\gamma }f_{ j}^{\bar \alpha }-f_{ j}^{\gamma }f_{i}^{\bar \alpha }) \\
        &- \frac{m-1}{4m}\int_{M}|f_{00}^{\alpha }|^2 dV_g+\frac{m-1}{4m}\int_{M} f_{00}^{\alpha }\overline{(Lf)^{\alpha }}dV_g\\
       &-(m-1) \sqrt{-1}\int_{M} f_{00}^{\alpha }f_{0}^{\bar \alpha }dV_g.
    \end{aligned}
\end{equation}
Recall that    the curvature tensor $\widetilde{R}_{ \beta \bar \alpha\gamma \bar \sigma } $ is said to be strongly negative (resp. strongly semi-negative ) if
    $$ \widetilde{R}_{ \beta \bar \alpha\gamma \bar \sigma } \left( A^{\beta }\overline{B^{ \alpha }}-  C^{\beta }\overline{D^{ \alpha }} \right)\overline{ \left(A^{\sigma }\overline{B^{ \gamma  }}-  C^{\sigma  }\overline{D^{ \gamma  }}\right)}  $$
    is positive (resp. non-negative)
     for any complex numbers $A^{\alpha },B^{\alpha } ,C^{\alpha } ,D^{\alpha }  $ whenever there exists at least one pair of indices $(\alpha ,\beta )$ such that
      $A^{\beta }\overline{B^{\alpha }}-C^{\beta  }\overline{D^{\alpha }} \neq  0$   (cf. \cite{Siu1980rigid}). Evidently, strongly negative curvature (resp. strongly semi-negative curvature)  implies negative sectional curvature (resp. semi-negative sectional curvature).
    If $N $ has strongly semi-negative curvature, then 
$$ -\widetilde{R}_{\bar \alpha \beta \gamma \bar \sigma } f_{\bar i}^{\bar \sigma }f_{\bar j}^{\beta } (f_{i}^{\gamma }f_{ j}^{\bar \alpha } -f_{ j}^{\gamma }f_{i}^{\bar \alpha }) =\frac{1}{2} \widetilde{R}_{ \beta \bar \alpha \gamma \bar \sigma } (f_{\bar i}^{\bar \alpha  }f_{\bar j}^{\beta }-f_{\bar j}^{\bar \alpha  }f_{\bar i}^{\beta }) (\overline{f_{\bar i}^{\bar \gamma }f_{ \bar j}^{ \sigma  } -f_{\bar j}^{\bar \gamma }f_{\bar i}^{\sigma  }}) \geq 0.$$

Next,
we introduce the $1$-form $F=F_{\bar k} \theta ^{\bar k} $ with
$$ F_{\bar k}:= (f_{\bar j j}^{\alpha }+\frac{1}{2}f_{00}^{\alpha })f_{\bar k}^{\bar \alpha } .$$
Then 
$$ \begin{aligned}
    F_{\bar k, k}=&(f_{\bar j j k}^{\alpha }+\frac{1}{2}f_{00k}^{\alpha })f_{\bar k}^{\bar \alpha }+(f_{\bar j j}^{\alpha }+\frac{1}{2}f_{00}^{\alpha })f_{\bar k k}^{\bar \alpha }\\
    =& ((Pf)_{k}^{\alpha }- 2m \sqrt{-1}A_{k j}f_{\bar j}^{\alpha })f_{\bar k}^{\bar \alpha }+ \frac{1}{2}(Lf)^\alpha f_{\bar k k}^{\bar \alpha } .
\end{aligned}  $$
Integrating $\delta F$ on $M $ yields
\begin{equation}\label{Fdiv}
    \int_{M} \la Pf,\dbb \bar f \ra dV_g = -\frac{1}{2}\int_{M} (Lf)^\alpha f_{\bar k k}^{\bar \alpha }dV_g
 +2m \sqrt{-1}\int_{M} A_{k j}f_{\bar j}^{\alpha }f_{\bar k}^{\bar \alpha } dV_g.
\end{equation}

\begin{theorem}\label{thmfoliated}
    Let $(M^{2m+1},H,J, \theta ) $  be a compact Sasakian manifold with $m \geq 2 $, and $ (N^{2n},\widetilde{J},\widetilde{g})$ be a K\"ahler manifold with strongly semi-negative curvature.
    If $f :M \to N$ is a $\dbb $-harmonic map or a $\db $-harmonic map, then $f$ is foliated. Therefore, $f $ must be $\dbb $-pluriharmonic (that is, $f_{i \bar j}^{\alpha }=f_{\bar j i}^{\alpha }=0 $) and 
    \begin{equation}\label{curv0}
        \widetilde{R}_{ \beta \bar \alpha \gamma \bar \sigma } (f_{\bar i}^{\bar \alpha  }f_{\bar j}^{\beta }-f_{\bar j}^{\bar \alpha  }f_{\bar i}^{\beta }) (\overline{f_{\bar i}^{\bar \gamma }f_{ \bar j}^{ \sigma  } -f_{\bar j}^{\bar \gamma }f_{\bar i}^{\sigma  }}) =0.
    \end{equation}
\end{theorem}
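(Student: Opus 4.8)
The plan is to combine the integrated Bochner identity \eqref{la1} with the generalized Paneitz identity \eqref{pani} so as to cancel the sign-indefinite cross term and arrive at an integral of a sum of manifestly nonnegative quantities. First I would record the consequences of the hypotheses. Since $M$ is Sasakian, $\tau\equiv 0$, hence $A^{i}_{\bar j}\equiv 0$ together with all of its covariant derivatives; this annihilates every torsion term occurring in Lemma \ref{lem:bochner0} and in \eqref{pani}. The $\dbb$-harmonicity of $f$ means, by \eqref{Lf}, that the section $\tau_{\dbb,\xi}(f)=(Lf)^{\alpha}\widetilde{\eta}_{\alpha}+\overline{(Lf)^{\alpha}}\widetilde{\eta}_{\bar\alpha}$ vanishes, i.e. $(Lf)^{\alpha}=2f^{\alpha}_{\bar j j}+f^{\alpha}_{00}\equiv 0$; differentiating this vanishing section gives $(Lf)^{\alpha}_{0}=0$ and $(Lf)^{\alpha}_{k}=0$, and in particular (using $A\equiv 0$) $Pf=0$. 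Integrating \eqref{la1} over the compact $M$, so that $\int_{M}\lap|df(\xi)|^{2}\,dv_{\theta}=0$, and discarding the torsion and $(Lf)$ terms, I obtain
\begin{equation*}
0=\int_{M}\Big\{2\big(|f^{\alpha}_{0j}|^{2}+|f^{\alpha}_{0\bar j}|^{2}+|f^{\alpha}_{00}|^{2}\big)+2\sqrt{-1}\,m\big(f^{\bar\alpha}_{0}f^{\alpha}_{00}-f^{\alpha}_{0}f^{\bar\alpha}_{00}\big)+2\,\widetilde{g}\big(\widetilde{R}(df(\eta_{j}),df(\xi))\overline{df(\eta_{j})},df(\xi)\big)\Big\}dv_{\theta},
\end{equation*}
where the last integrand is pointwise nonnegative by Remark \ref{re:2.2}, because strongly semi-negative curvature forces nonpositive sectional curvature.

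The obstacle is the sign-indefinite cross term $2\sqrt{-1}\,m(f^{\bar\alpha}_{0}f^{\alpha}_{00}-f^{\alpha}_{0}f^{\bar\alpha}_{00})=-4m\,\mathrm{Im}(f^{\bar\alpha}_{0}f^{\alpha}_{00})$, which cannot be absorbed as it stands. The remedy, in the spirit of \cite{CDRY2019onharm}, is to feed it through \eqref{pani}: with $Pf=0$, $A\equiv 0$ and $(Lf)^{\alpha}\equiv 0$, the real part of \eqref{pani} reads
\begin{equation*}
0=\int_{M}|B_{i\bar j}f^{\alpha}|^{2}+\tfrac12\int_{M}\widetilde{R}_{\beta\bar\alpha\gamma\bar\sigma}\big(f^{\bar\alpha}_{\bar i}f^{\beta}_{\bar j}-f^{\bar\alpha}_{\bar j}f^{\beta}_{\bar i}\big)\overline{\big(f^{\bar\gamma}_{\bar i}f^{\sigma}_{\bar j}-f^{\bar\gamma}_{\bar j}f^{\sigma}_{\bar i}\big)}-\tfrac{m-1}{4m}\int_{M}|f^{\alpha}_{00}|^{2}+(m-1)\int_{M}\mathrm{Im}\big(f^{\bar\alpha}_{0}f^{\alpha}_{00}\big),
\end{equation*}
the curvature integrand again being pointwise nonnegative by strong semi-negativity. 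Since $m\ge 2$, I may solve this for $\int_{M}\mathrm{Im}(f^{\bar\alpha}_{0}f^{\alpha}_{00})$ and substitute into the integrated Bochner identity; the cross term is thereby traded for a positive multiple of $\int_{M}|B_{i\bar j}f^{\alpha}|^{2}$ plus nonnegative curvature contributions, at the cost of $-\int_{M}|f^{\alpha}_{00}|^{2}$, which is absorbed by the $2\int_{M}|f^{\alpha}_{00}|^{2}$ already present. The outcome is an identity of the form
\begin{equation*}
0=\int_{M}\Big\{2|f^{\alpha}_{0j}|^{2}+2|f^{\alpha}_{0\bar j}|^{2}+|f^{\alpha}_{00}|^{2}+\tfrac{4m}{m-1}|B_{i\bar j}f^{\alpha}|^{2}+\tfrac{2m}{m-1}\widetilde{R}_{\beta\bar\alpha\gamma\bar\sigma}\big(f^{\bar\alpha}_{\bar i}f^{\beta}_{\bar j}-f^{\bar\alpha}_{\bar j}f^{\beta}_{\bar i}\big)\overline{\big(\cdots\big)}+2\,\widetilde{g}\big(\widetilde{R}(df(\eta_{j}),df(\xi))\overline{df(\eta_{j})},df(\xi)\big)\Big\}dv_{\theta},
\end{equation*}
in which every summand is nonnegative, hence vanishes identically on $M$.

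It then remains to read off the conclusions. From $f^{\alpha}_{0j}=f^{\alpha}_{0\bar j}=0$, together with their conjugates $f^{\bar\alpha}_{0j}=\overline{f^{\alpha}_{0\bar j}}=0$ and $f^{\bar\alpha}_{0\bar j}=\overline{f^{\alpha}_{0j}}=0$ (recall $\beta$ is a real tensor and $\xi$ is real), we get $\beta(\xi,X)=0$ for all $X\in H$, so Lemma \ref{le2.5} shows $f$ is foliated; hence $f^{\alpha}_{0}\equiv 0$, so $f^{\alpha}_{00}=0$, and $(Lf)^{\alpha}=0$ yields $\sum_{j}f^{\alpha}_{\bar j j}=0$, while the third relation of \eqref{eq:cmu1} gives $f^{\alpha}_{i\bar j}=f^{\alpha}_{\bar j i}$, whence $\sum_{k}f^{\alpha}_{k\bar k}=\sum_{k}f^{\alpha}_{\bar k k}=0$. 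Then $B_{i\bar j}f^{\alpha}=f^{\alpha}_{i\bar j}-\tfrac1m\big(\sum_{k}f^{\alpha}_{k\bar k}\big)h_{i\bar j}=0$ collapses to $f^{\alpha}_{i\bar j}=0$, and therefore $f^{\alpha}_{\bar j i}=f^{\alpha}_{i\bar j}=0$, i.e. $f$ is $\dbb$-pluriharmonic; and the vanishing of the pointwise-nonnegative curvature integrand is precisely \eqref{curv0}. The $\db$-harmonic case is entirely analogous: it is obtained either by interchanging the roles of $\db$ and $\dbb$ throughout, or by replacing $\widetilde{J}$ on $N$ by $-\widetilde{J}$, which preserves both the K\"ahler condition and strong semi-negativity.

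The main obstacle I anticipate is the sign bookkeeping in the step where the (complex) Paneitz identity is combined with the Bochner identity — in particular verifying that the leftover $|f^{\alpha}_{00}|^{2}$ cancels exactly and that both curvature contributions enter with the correct (nonnegative) sign, which forces careful use of the Kähler symmetries of $\widetilde{R}$ and of Remark \ref{re:2.2}. Everything else reduces to routine applications of the commutation relations of \S\ref{se:commut} and of Lemma \ref{le2.5}.
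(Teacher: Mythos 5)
Your proof is correct and follows essentially the same route as the paper: both rest on combining the integrated Bochner identity \eqref{la1} with the Paneitz-type identity \eqref{pani} after the Sasakian and $\dbb$-harmonicity hypotheses kill the torsion and $(Lf)^{\alpha}$ terms, then invoking pointwise nonnegativity of each remaining integrand and Lemma \ref{le2.5}. The only cosmetic differences are that you fold everything into a single sum of nonnegative integrals, whereas the paper first adds the two inequalities \eqref{00posi} and \eqref{00neg} to conclude $f_{00}^{\alpha}=0$ and then substitutes back, and that you obtain $\int_M\langle Pf,\dbb \bar f\rangle=0$ from the pointwise identity $Pf=0$ (valid since $A\equiv 0$ makes $(Pf)_k^{\alpha}=\tfrac12 (Lf)_k^{\alpha}$) rather than from \eqref{Fdiv}; both variants are sound.
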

\begin{proof}
    Suppose $f $ is $\dbb $-harmonic (the case for $\db $-harmonic map is similar). Then $(Lf)^{\alpha } =0$, or equivalently,
    \begin{equation}\label{pr38}
        f_{j \bar j }^{\alpha  }+f_{\bar j j }^{\alpha  }+f_{00 }^{\alpha  }-2m \sqrt{-1} f_{0 }^{\alpha  }=0.
    \end{equation}
     Since $M $ is Sasakian, we have $A_{ij}=0 $, and hence, $\eqref{la1} $ simplifies to 
    \begin{equation}\label{xibo1}
    \begin{aligned}
       \frac{1}{2} \lap |df(\xi )|^2=& 2\sum_{j}(|f_{0j }^{\alpha  }|^2+|f_{0\bar j }^{\alpha  }|^2) +2|f_{00}^{\alpha }|^2
       +2m \sqrt{-1} (f_{0 }^{\bar \alpha  }f_{00 }^{\alpha  }-f_{0 }^{ \alpha  }f_{00 }^{\bar\alpha  })  
       \\
        &+2 f_{0 }^{\bar{\alpha } } f_{\bar j}^{\beta }f_{j}^{\gamma }f_{0}^{\bar \sigma }\widetilde{R}_{\bar{\alpha }\beta \gamma \bar \sigma }+2 f_{\bar{j} }^{\bar{\alpha } } f_{0}^{\beta }f_{0}^{\gamma }f_{j}^{\bar \sigma }\widetilde{R}_{\bar{\alpha }\beta \gamma \bar \sigma } \\
        &-2f_{0 }^{\bar{\alpha } } f_{\bar j}^{\beta }f_{0}^{\gamma }f_{j}^{\bar \sigma }\widetilde{R}_{\bar{\alpha }\beta \gamma \bar \sigma }-2f_{0 }^{\bar{\alpha } }f_{j}^{\beta }f_{0}^{\gamma }f_{\bar j}^{\bar \sigma }\widetilde{R}_{\bar{\alpha }\beta \gamma \bar \sigma }.
    \end{aligned} 
     \end{equation}   
Therefore, by Remark  \ref{re:2.2}, integrating $\eqref{xibo1} $ over $M $ and applying integrating by parts, we have 
\begin{equation}\label{00posi}
    \begin{aligned}
        4m \sqrt{-1} \int_{M} f_{0 }^{\bar \alpha  }f_{00 }^{\alpha  }  
        +2\int_{M}|f_{00}^{\alpha }|^2 \leq 0.
    \end{aligned} 
\end{equation}

On the other hand, since $f $ is $\dbb $-harmonic, we get from   \eqref{Fdiv} that
\begin{equation}\label{prpf}
    \int_{M} \la Pf,\dbb \bar f \ra dV_g=0. 
\end{equation}
From  $\eqref{pani} $ and the curvature condition, we obtain
\begin{equation}\label{00neg}
    -\int_{M}|f_{00}^{\alpha }|^2 dV_g
           -4m\sqrt{-1}\int_{M} f_{00}^{\alpha }f_{0}^{\bar \alpha }dV_g \leq 0.
\end{equation}
Then $\eqref{00posi} $ and $\eqref{00neg} $ imply that 
$f_{00}^{\alpha } =0.$ 
Substituting it into $ \eqref{xibo1}$, we get 
$$  \frac{1}{2} \lap |df(\xi )|^2\geq 2\sum_{j}(|f_{0j }^{\alpha  }|^2+|f_{0\bar j }^{\alpha  }|^2) \geq 0 . $$
Thus, $df(\xi)=0 $ by utilizing the divergence theorem and Lemma \ref{le2.5}.

Furthermore, by substituting \eqref{prpf} and $f_{0}^{\alpha }=0$ into  \eqref{pani}, we obtain 
$$ \int_{M } |B_{i \bar j}f^{\alpha }|^2 dV_g  - \int_{M}  \widetilde{R}_{\bar \alpha \beta \gamma \bar \sigma } f_{\bar i}^{\bar \sigma }f_{\bar j}^{\beta } (f_{i}^{\gamma }f_{ j}^{\bar \alpha }-f_{ j}^{\gamma }f_{i}^{\bar \alpha }) =0.$$
Note that 
$$ -\widetilde{R}_{\bar \alpha \beta \gamma \bar \sigma } f_{\bar i}^{\bar \sigma }f_{\bar j}^{\beta } (f_{i}^{\gamma }f_{ j}^{\bar \alpha } -f_{ j}^{\gamma }f_{i}^{\bar \alpha }) =\frac{1}{2} \widetilde{R}_{ \beta \bar \alpha \gamma \bar \sigma } (f_{\bar i}^{\bar \alpha  }f_{\bar j}^{\beta }-f_{\bar j}^{\bar \alpha  }f_{\bar i}^{\beta }) (\overline{f_{\bar i}^{\bar \gamma }f_{ \bar j}^{ \sigma  } -f_{\bar j}^{\bar \gamma }f_{\bar i}^{\sigma  }}) \geq 0.$$
Thus, we get \eqref{curv0} and $B_{i \bar j}f^{\alpha }=0$. Clearly, $f_{0}^{\alpha }=0$ and $A_{ij}=0$ imply that $f_{j \bar j}^{\alpha }=f_{\bar j j}^{\alpha }=0$. 
Consequently, from the definition of $B_{i \bar j}f^{\alpha }$, we have
$$ f_{\bar j i}^{\alpha }=f_{i \bar j}^{\alpha }= \frac{1}{m}f_{k \bar k}^{\alpha }h_{i \bar j}=0. $$
This completes the proof.
\end{proof}     

Note that the rank condition in Siu's theorem mentioned in the introduction can be improved as $\rank_{\rr} (df_x) \geq 3$ at some point $x$ (cf. \cite{Jost1991nonlinear}).
By a similar argument as \cite{Siu1980rigid} and \cite{CDRY2019onharm}, we get immediately from \eqref{curv0} the following
\begin{theorem}
    Let $(M^{2m+1},H,J, \theta ) $  be a compact Sasakian manifold with $m \geq 2 $ and  $ (N^{2n},\widetilde{J},\widetilde{g})$  a K\"ahler manifold with strongly negative curvature. Suppose
     $f :M \to N$ is a $\dbb $-harmonic map   and $df$ has real rank at least 3 at some point $p \in M$. Then $f$  is either $(J, \widetilde{J})$-holomorphic or anti-$(J, \widetilde{J})$-holomorphic.
\end{theorem}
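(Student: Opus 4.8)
Since strongly negative curvature is in particular strongly semi-negative, Theorem~\ref{thmfoliated} applies and gives at once that $f$ is foliated, $\dbb$-pluriharmonic (so $f_{i\bar j}^{\alpha}=f_{\bar j i}^{\alpha}=0$), and that \eqref{curv0} holds on all of $M$. The plan is to extract from \eqref{curv0} a pointwise linear-algebra constraint à la Siu, use the rank hypothesis to rule out the ``mixed'' case near $p$, and then propagate the resulting holomorphicity by a unique continuation argument.

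\emph{Step 1: the Siu constraint.} Write $P_i^{\alpha}:=f_i^{\alpha}$ and $Q_i^{\alpha}:=f_{\bar i}^{\alpha}$ for the components of $\db f$ and $\dbb f$, so that $f_{\bar i}^{\bar\alpha}=\overline{P_i^{\alpha}}$. For fixed $i,j$ the corresponding summand of \eqref{curv0} is exactly the strongly-negative expression evaluated at $A^{\beta}=f_{\bar j}^{\beta}$, $B^{\alpha}=f_i^{\alpha}$, $C^{\beta}=f_{\bar i}^{\beta}$, $D^{\alpha}=f_j^{\alpha}$, hence is $\geq 0$ by strong semi-negativity; as the whole sum vanishes, every summand vanishes, and strictness of the negativity forces
\[
\overline{P_i^{\alpha}}\,Q_j^{\beta}=\overline{P_j^{\alpha}}\,Q_i^{\beta}\qquad\text{for all }i,j,\alpha,\beta
\]
at every point of $M$. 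Equivalently, for each $\alpha,\beta$ the vectors $\overline{P^{\alpha}}=(\overline{P_i^{\alpha}})_i$ and $Q^{\beta}=(Q_i^{\beta})_i$ in $\mathbb{C}^{m}$ are linearly dependent. A short case analysis then yields, at each point, one of: $\db f=0$; or $\dbb f=0$; or both are nonzero, in which case all columns of $P$ and all columns of $Q$ are proportional to a single nonzero $u\in\mathbb{C}^{m}$, so that $df(\eta_j)=u_j\,w$ and $df(\eta_{\bar j})=\bar u_j\,\bar w$ for one fixed vector $w$, and therefore $\rank_{\rr}df\leq 2$ there.

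\emph{Step 2: localizing via the rank condition.} Since $f$ is foliated, $\rank_{\rr}df=\rank_{\rr}(df|_{H})$, and $O:=\{\,\rank_{\rr}df\geq 3\,\}$ is open (lower semicontinuity of rank) and nonempty (it contains $p$). By Step~1 the third alternative is excluded on $O$, so $O=(O\cap\{\db f=0\})\sqcup(O\cap\{\dbb f=0\})$ is a disjoint union of two relatively closed sets; moreover $p$ lies in exactly one of them because $df_p\neq 0$. Passing to the connected component $O_p$ of $O$ through $p$, connectedness forces $\db f\equiv 0$ on $O_p$ or $\dbb f\equiv 0$ on $O_p$; assume the latter (the former being symmetric), so that $f$ is a CR map on the nonempty open set $O_p$.

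\emph{Step 3: propagation.} It remains to upgrade $\dbb f\equiv 0$ on an open set to $\dbb f\equiv 0$ on all of $M$. From the commutation relations of \S\ref{se:commut}, the Sasakian condition ($A_{ij}=0$), $\dbb$-pluriharmonicity ($f_{\bar j k}^{\alpha}=0$), and $f_0^{\alpha}=0$ (whence $f_{\bar j 0}^{\alpha}=f_{0\bar j}^{\alpha}=0$), one finds that $\dbb f$, regarded as a section of $(H^{0,1})^{*}\otimes f^{-1}T^{1,0}N$, is annihilated by $\nabla$ in every horizontal $(1,0)$-direction and in the Reeb direction; in particular $\dbb f\in\ker\bigl((\nabla^{1,0})^{*}\nabla^{1,0}+\nabla_{\xi}^{*}\nabla_{\xi}\bigr)$. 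On a contact manifold this operator is second-order \emph{elliptic} — its principal symbol is $|\cdot|^{2}$ times the identity, since the horizontal and Reeb directions together span $TM$ — with smooth coefficients, so Aronszajn's unique continuation theorem (valid for systems with scalar principal part) applies: a section of its kernel vanishing on a nonempty open subset of the connected manifold $M$ vanishes identically. Hence $\dbb f\equiv 0$, i.e.\ $f$ is a foliated CR map and therefore $(J,\widetilde J)$-holomorphic; in the alternative case $\db f\equiv 0$ one obtains an anti-$(J,\widetilde J)$-holomorphic map. The substantive computation is Step~1 (Siu's rigidity argument transcribed to the CR setting), while the genuinely delicate point is Step~3: one must observe that $\dbb$-pluriharmonicity together with the Sasakian and foliated conditions makes $\dbb f$ a solution of a \emph{truly elliptic} (not merely subelliptic) system, which is exactly what allows the Aronszajn-type continuation to run as in \cite{Siu1980rigid}, \cite{Jost1991nonlinear} and \cite{CDRY2019onharm}.
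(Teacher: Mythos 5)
Your proof is correct and is precisely the argument the paper intends: its own ``proof'' of this theorem consists only of the citation ``by a similar argument as \cite{Siu1980rigid} and \cite{CDRY2019onharm}, we get immediately from \eqref{curv0}\dots'', and your three steps --- the pointwise Siu linear algebra extracted from \eqref{curv0} via strong negativity, the rank-$\geq 3$ dichotomy on the open set $O$, and Aronszajn-type unique continuation for $\dbb f$ based on $f^{\alpha}_{\bar j k}=f^{\alpha}_{\bar j 0}=0$ (which follow from $\dbb$-pluriharmonicity, $f^{\alpha}_{0}=0$ and $A_{ij}=0$) --- are a faithful and complete implementation of that cited argument. I see no gaps; the only cosmetic imprecision is that the principal symbol of your operator is $\tfrac12|\zeta_H|^2+\zeta(\xi)^2$ rather than literally $|\zeta|^2$, which of course does not affect ellipticity or the unique continuation.
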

\begin{remark}
    If $f$ is $\db$-harmonic (with the other assumptions unchanged), then the conclusion remains valid.
\end{remark}


\begin{thebibliography}{CDRY19}

    \bibitem[BDU01]{BDU2001pseudoh}
    Elisabetta Barletta, Sorin Dragomir, and Hajime Urakawa.
    \newblock Pseudoharmonic maps from nondegenerate {CR} manifolds to {R}iemannian manifolds.
    \newblock {\em Indiana Univ. Math. J.}, 50(2):719--746, 2001.
    
    \bibitem[CDRY19]{CDRY2019onharm}
    Tian Chong, Yuxin Dong, Yibin Ren, and Guilin Yang.
    \newblock On harmonic and pseudoharmonic maps from pseudo-{H}ermitian manifolds.
    \newblock {\em Nagoya Math. J.}, 234:170--210, 2019.
    
    \bibitem[DK10a]{DK2010pseudo}
    Sorin Dragomir and Yoshinobu Kamishima.
    \newblock Pseudoharmonic maps and vector fields on {CR} manifolds.
    \newblock {\em J. Math. Soc. Japan}, 62(1):269--303, 2010.
    
    \bibitem[DK10b]{dragomirPseudoharmonicMapsVector2010a}
    Sorin Dragomir and Yoshinobu Kamishima.
    \newblock {Pseudoharmonic maps and vector fields on CR manifolds}.
    \newblock {\em Journal of the Mathematical Society of Japan}, 62(1), January 2010.
    
    \bibitem[Don16]{donghhharmonicMapsPseudoHermitian2016a}
    Yuxin Dong.
    \newblock {On $(H, \widetilde{H})$-harmonic maps between pseudo-Hermitian manifolds},
    \newblock arXiv:1610.01032, October 2016.
    
    \bibitem[DT06]{DT2006differential}
    Sorin Dragomir and Giuseppe Tomassini.
    \newblock {\em Differential geometry and analysis on {CR} manifolds}, volume 246 of {\em Progress in Mathematics}.
    \newblock Birkh\"auser Boston, Inc., Boston, MA, 2006.
    
    \bibitem[EL83]{EL1983selected}
    James Eells and Luc Lemaire.
    \newblock {\em Selected topics in harmonic maps}, volume~50 of {\em CBMS Regional Conference Series in Mathematics}.
    \newblock Conference Board of the Mathematical Sciences, Washington, DC; by the American Mathematical Society, Providence, RI, 1983.
    
    \bibitem[ES64]{ES1964harmonic}
    James Eells, Jr. and J.~H. Sampson.
    \newblock Harmonic mappings of {R}iemannian manifolds.
    \newblock {\em Amer. J. Math.}, 86:109--160, 1964.
    
    \bibitem[GIP01]{GIP2001cr}
    Catalin Gherghe, Steriu Ianus, and Anna~Maria Pastore.
    \newblock C{R}-manifolds, harmonic maps and stability.
    \newblock {\em J. Geom.}, 71(1-2):42--53, 2001.
    
    \bibitem[GL88]{grahamSmoothSolutionsDegenerate1988}
    C.~Robin Graham and John~M. Lee.
    \newblock {Smooth solutions of degenerate Laplacians on strictly pseudoconvex domains}.
    \newblock {\em Duke Mathematical Journal}, 57(3), December 1988.
    
    \bibitem[Jos91]{Jost1991nonlinear}
    J\"urgen Jost.
    \newblock {\em Nonlinear methods in {R}iemannian and {K}\"ahlerian geometry}, volume~10 of {\em DMV Seminar}.
    \newblock Birkh\"auser Verlag, Basel, second edition, 1991.
    
    \bibitem[JY93]{JY1993nonlinear}
    J\"urgen Jost and Shing-Tung Yau.
    \newblock A nonlinear elliptic system for maps from {H}ermitian to {R}iemannian manifolds and rigidity theorems in {H}ermitian geometry.
    \newblock {\em Acta Math.}, 170(2):221--254, 1993.
    
    \bibitem[Lic70]{Lich1970app}
    Andr\'e Lichnerowicz.
    \newblock Applications harmoniques et vari\'et\'es k\"ahleriennes.
    \newblock In {\em Symposia {M}athematica, {V}ol. {III} ({INDAM}, {R}ome, 1968/69)}, pages 341--402. Academic Press, London-New York, 1970.
    
    \bibitem[LS19]{LS2019CRanalogue}
    Song-Ying Li and Duong~Ngoc Son.
    \newblock C{R}-analogue of the {S}iu-{$\partial\overline{\partial}$}-formula and applications to the rigidity problem for pseudo-{H}ermitian harmonic maps.
    \newblock {\em Proc. Amer. Math. Soc.}, 147(12):5141--5151, 2019.
    
    \bibitem[LY14]{LY2014hermitian}
    Kefeng Liu and Xiaokui Yang.
    \newblock Hermitian harmonic maps and non-degenerate curvatures.
    \newblock {\em Math. Res. Lett.}, 21(4):831--862, 2014.
    
    \bibitem[Pet02]{Pet2002harmonic}
    Robert Petit.
    \newblock Harmonic maps and strictly pseudoconvex {CR} manifolds.
    \newblock {\em Comm. Anal. Geom.}, 10(3):575--610, 2002.
    
    \bibitem[Pet09]{Pet2009mok}
    Robert Petit.
    \newblock Mok-{S}iu-{Y}eung type formulas on contact locally sub-symmetric spaces.
    \newblock {\em Ann. Global Anal. Geom.}, 35(1):1--37, 2009.
    
    \bibitem[Sam78]{Samp1978some}
    J.~H. Sampson.
    \newblock Some properties and applications of harmonic mappings.
    \newblock {\em Ann. Sci. \'Ecole Norm. Sup. (4)}, 11(2):211--228, 1978.
    
    \bibitem[Siu80]{Siu1980rigid}
    Yum~Tong Siu.
    \newblock The complex-analyticity of harmonic maps and the strong rigidity of compact {K}\"ahler manifolds.
    \newblock {\em Ann. of Math. (2)}, 112(1):73--111, 1980.
    
    \bibitem[SSZ13]{SSZ2013holomorphic}
    Bin Shen, Yibing Shen, and Xi~Zhang.
    \newblock Holomorphic maps from {S}asakian manifolds into {K}\"ahler manifolds.
    \newblock {\em Chinese Ann. Math. Ser. B}, 34(4):575--586, 2013.
    
    \bibitem[Tan75]{TA1975adifferential}
    Noboru Tanaka.
    \newblock {\em A differential geometric study on strongly pseudo-convex manifolds}, volume No. 9 of {\em Lectures in Mathematics, Department of Mathematics, Kyoto University}.
    \newblock Kinokuniya Book Store Co., Ltd., Tokyo, 1975.
    
    \bibitem[Web78]{We1978pseudo}
    S.~M. Webster.
    \newblock Pseudo-{H}ermitian structures on a real hypersurface.
    \newblock {\em J. Differential Geometry}, 13(1):25--41, 1978.
    
    \end{thebibliography}
\end{document}